\author{Cecilia Holmgren\footnote{Department of Mathematics, Stockholm University, 114 18 Stockholm, Sweden. Supported in part by the Swedish Research Council.}
	\and
	Svante Janson\footnote{Department of Mathematics, Uppsala University, SE-75310 Uppsala, Sweden. Supported in part by the Knut and Alice Wallenberg Foundation.}}
\title{Asymptotic distribution of two-protected nodes in ternary search trees}
\date{March 21, 2014}
\numberwithin{equation}{section}
\newenvironment{changemargin}[2]{%
\begin{list}{}{%
\setlength{\topsep}{0pt}%
\setlength{\topmargin}{#1}%
\setlength{\listparindent}{\parindent}%
\setlength{\itemindent}{\parindent}%
\setlength{\parsep}{\parskip}%
}%
\item[]}{\end{list}}
\newtheorem{thm}{Theorem}[section] 
\newtheorem{Lemma}[thm]{Lemma}
\theoremstyle{definition}
\newtheorem{rem}[thm]{Remark}
\newtheorem{example}[thm]{Example}
\newcommand\REM[1]{{\raggedright\texttt{[#1]}\par\marginal{XXX}}}
\newcommand\ga{\alpha}
\newcommand\gd{\delta}
\newcommand\gl{\lambda}
\newcommand\gs{\sigma}
\newcommand\gss{\sigma^2}
\newcommand{\refT}[1]{Theorem~\ref{#1}}
\newcommand{\refL}[1]{Lemma~\ref{#1}}
\newcommand{\refS}[1]{Section~\ref{#1}}
\newcommand\set[1]{\ensuremath{\{#1\}}}
\newcommand\bigpar[1]{\bigl(#1\bigr)}
\newcommand\Bigpar[1]{\Bigl(#1\Bigr)}
\newcommand\lrpar[1]{\left(#1\right)}
\newcommand\E{\operatorname{\mathbb E{}}}
\newcommand\xfrac[2]{#1/#2}
\newcommand\N{\mathcal N}
\newcommand\Polya{P{\'o}lya}
\newcommand\sumimi{\sum_{i=0}^{m-1}}
\newcommand\iii{^{(i)}}
\renewcommand\Re{\operatorname{Re}}
\begin{document}
\maketitle


\begin{abstract}
We study protected  nodes in $m$-ary search trees, by putting them in
context of generalised P\'olya urns.  
We show that the number of two-protected nodes (the nodes that are neither
leaves nor parents of leaves) in a random ternary search tree is
asymptotically normal.
The methods apply in principle to $m $-ary search trees with larger $m$
as well, 
although the size of the matrices used in the calculations grow rapidly with
$ m $; we conjecture that the method yields an
asymptotically normal distribution for all $m\le26$.

The one-protected nodes, and their complement, i.e., the leaves, 
are easier to analyze.  
By using a simpler P\'olya urn (that is similar to the one that has earlier
been used to study the total number of nodes in $ m $-ary search trees), we
prove normal limit laws for the number of one-protected nodes and the number
of leaves for all $ m\leq 26 $.  
\end{abstract}

\textbf{Keywords:} Random trees, P\'olya urns, Normal limit laws, $ M $-ary
search trees.

\textbf{MSC 2010 subject classifications:}
Primary 60C05; secondary  05C05, 60F05, 68P05.

\section{Introduction}\label{intro}
There are many recent studies of so-called protected nodes in various
classes of random trees, see
e.g.\ 
\cite{Bona,CheonShapiro,DevroyeJanson,DuProdinger,HolmgrenJanson,MahmoudWard,Mansour}. 
A node is \emph{protected} (more precisely, two-protected) if it is not a
leaf and  
none of its children is a leaf.  

In this paper we consider the number of protected nodes in $ m $-ary
search trees (see \refS{mary} for definitions), by putting them in context
of generalised \Polya{} urns. 
The following result is our main theorem.
We let $\stackrel{d}\rightarrow  $ denote convergence in distribution and 
denote a normal distribution by $ \N(\mu,\sigma^2)$. 

\begin{thm}\label{main}Let $ Z_n $ be the number of protected nodes in a
  ternary search tree with $n$ keys.
Then 
$$
\dfrac{Z_n-\frac{57}{700}n}{\sqrt{n}}\stackrel{d}\longrightarrow 
\N\lrpar{0,\frac{1692302314867}{43692253605000}}.  $$
\end{thm}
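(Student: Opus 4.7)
The plan is to express $Z_n$ as a linear functional of the state vector of a suitable generalised \Polya{} urn associated with the ternary search tree, and then invoke the standard central limit theorem for generalised \Polya{} urns.

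The central construction is the urn itself. Because whether an internal node is $2$-protected depends on the leaf status of each of its children, which itself depends on the children's children, the ball types must encode at least two levels of local structure at each gap. I would therefore assign to each of the $n+1$ gaps (rank positions) of the ternary search tree a ball whose type records the fringe subtree containing it, enumerating all ternary search tree configurations up to a fixed cutoff size chosen large enough that the $2$-protectedness of every internal node is determined by the local information in its enclosing fringe subtree. When a ball is drawn and the corresponding key is inserted, the fringe subtree updates deterministically; if it outgrows the cutoff, it splits into several smaller fringe subtrees of known types. This procedure yields an integer replacement matrix $R$, and $Z_n$ becomes a linear combination of the coordinates of the urn state, with the coefficient of each type being the number of $2$-protected nodes contributed by that type.

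Once the urn is set up, the rest is routine. The urn is balanced (one net ball added per draw), so the principal eigenvalue of $R$ is $\lambda_1=1$. The CLT for generalised \Polya{} urns gives fluctuations of order $\sqrt n$ provided every other eigenvalue $\lambda$ of $R$ satisfies $\operatorname{Re}\lambda<1/2$; this I would verify by direct spectral computation, noting that the classical $m=3$ search-tree urn has its only other eigenvalue equal to $-4$, and that the refinement should only add eigenvalues of comparable or smaller real part. Since $Z_n$ is linear in the urn state, the CLT transfers to $Z_n$: the mean coefficient $\tfrac{57}{700}$ is read off from the left principal eigenvector of $R$ paired with the $Z_n$-functional, while the variance $\tfrac{1692302314867}{43692253605000}$ is obtained by solving a Lyapunov-type equation for the covariance matrix involving $R$ and the covariance of the replacement vectors.

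The main obstacle is computational rather than conceptual: enumerating the types, writing down and diagonalising the (potentially large) matrix $R$, and carrying out the rational arithmetic needed to produce the exact mean and variance. The unwieldy denominator $43692253605000$ in the variance makes it clear that a computer algebra system will be essential. The genuinely creative step is choosing the set of types small enough to be manageable and rich enough that the count of $2$-protected nodes is linear in the urn state.
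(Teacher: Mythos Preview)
Your overall plan---encode local structure in a generalised \Polya{} urn and read off the CLT---matches the paper's, but your particular urn (one ball per gap) has a genuine gap.

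A $2$-protected node whose three children are all non-leaves contributes \emph{no} gaps: the node is full, its children are full, and the nearest gaps sit at the grandchildren or below. No fringe subtree of bounded size anchored at a gap can reach up to such a node. In the paper's $19$-type decomposition this is type $(k_0,k_1,k_2)=(0,0,0)$ (their type $19$), with activity $0$; asymptotically $X_{n,19}/n\to 84/2100=1/25$, so these account for a constant fraction of all protected nodes. Since such a node carries no ball in your urn, your assertion that ``the coefficient of each type [is] the number of $2$-protected nodes contributed by that type'' fails: $Z_n$ is not a linear functional of the gap-type vector alone. The paper avoids this by taking the small trees themselves (one per internal non-leaf) as balls, with activity equal to the number of gaps; type $19$ then has activity $0$ but is still tracked, and $Z_n=X_{n,17}+X_{n,18}+X_{n,19}$ exactly. (Your urn could be salvaged by adjoining the key-count identity to recover $X_{n,19}$ linearly from $n$ and the positive-activity counts, but that step is missing from the proposal.)

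Two smaller points. The resulting $19\times 19$ matrix $A$ is \emph{not} diagonalisable (the eigenvalue $-4$ has algebraic multiplicity $4$ but geometric multiplicity $3$), so your plan to diagonalise does not go through; the paper computes the covariance via the integral $\Sigma=\int_0^\infty P_I e^{sA}Be^{sA'}P_I'\, e^{-\lambda_1 s}\,ds$ instead. And ``the refinement should only add eigenvalues of comparable or smaller real part'' is a hope, not an argument; one must actually compute the $19$ eigenvalues (they turn out to be $1,0,-2,-3,-3,-4,-4,-4,-4,-5,-5,-5,-6,-6,-6,-7,-7,-8,-9$) and verify $\Re\lambda<1/2$ directly.
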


For a binary search tree, 
we obtain by the same method a new proof of
the following result, which earlier has been
obtained by different methods, first 
by Mahmoud and Ward \cite{MahmoudWard} (using generating functions), 
and later in \cite{HolmgrenJanson} (using fringe trees).

\begin{thm}\label{binary}Let $Y_n $ be the number of protected nodes in
  a binary search tree.
Then
$$\dfrac{Y_n-\frac{11}{30}n}{\sqrt{n}}
\stackrel{d}\longrightarrow 
\mathcal{N}\lrpar{0,\frac{29}{225}}.  $$
\end{thm}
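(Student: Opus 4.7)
The plan is to apply the same generalized P\'olya urn methodology used for \refT{main}, now adapted to the simpler setting of binary search trees. As in the ternary case, the idea is to associate balls with the gaps (external nodes) of the tree where the next key may be inserted, and to colour each gap by enough information about its local fringe environment that the random-insertion dynamics become a genuine generalized P\'olya urn and the number $Y_n$ of 2-protected internal nodes is a linear functional of the colour counts (up to an $O(1)$ boundary correction).

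Concretely, I would classify each gap $e$ by a finite description of its depth-two neighbourhood---roughly, the shape of the subtree rooted at the grandparent of $e$ truncated to depth two, recording which of the nodes involved are leaves. Depth two is exactly what is required because the 2-protected status of any node depends only on its depth-two subtree, and an insertion at $e$ only alters the subtrees rooted at the parent and grandparent of $e$. A direct case analysis then produces, for each colour, the (deterministic) table of colour-increments when that colour is drawn, hence a finite mean replacement matrix $A$ of the same type as the one used for \refT{main}, but acting on a much smaller state space.

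With the urn in place, I would check the spectral hypotheses of the P\'olya urn limit theorem that is applied in \refT{main}: the principal eigenvalue $\lambda_1$ of $A$ is simple and positive with a strictly positive eigenvector, and every other eigenvalue $\lambda$ satisfies $\Re\lambda<\lambda_1/2$. Under these hypotheses, that theorem yields joint asymptotic normality for the vector of colour counts after centering by $n$ times the principal right eigenvector and scaling by $\sqrt{n}$; pushing this through the linear functional representing $Y_n$ gives $n^{-1/2}(Y_n-\mu n)\dto\N(0,\sigma^2)$, with $\mu$ read off from the principal eigenvectors and $\sigma^2$ from the full spectral decomposition of $A$ together with the one-step covariance of the urn. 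The claimed values $\mu=11/30$ and $\sigma^2=29/225$ then follow from an explicit rational linear-algebra calculation on a small matrix.

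The main obstacle is the combinatorial bookkeeping: one must pick a colour scheme small enough to be practical, but rich enough that the colour of each gap, together with the colours of its neighbours, really is updated consistently under every possible insertion (so that no hidden state escapes into the rest of the tree). Once the colours are fixed, the remaining steps---verifying the spectral gap of $A$, identifying $Y_n$ as a linear functional of the colour counts, and computing $\mu$ and $\sigma^2$---are routine and reproduce the constants stated in \refT{binary}.
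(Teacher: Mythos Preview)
Your overall plan---encode local fringe structure in a generalised \Polya{} urn, verify the spectral condition, and read off $\mu$ and $\sigma^2$ from the limit theorem---is exactly the paper's strategy. But the specific colouring you propose, labelling each gap by the depth-two subtree at its grandparent, does not yield a valid urn, and this is a genuine obstruction rather than bookkeeping.

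Here is the leak. Suppose the parent $p$ of the drawn gap $e$ is a leaf and the other child $u$ of the grandparent $g$ is external. Then $u$ is itself a gap whose grandparent is $h$, the parent of $g$, so $u$'s colour records the depth-two subtree at $h$---in particular whether $p$ is a leaf. Inserting at $e$ turns $p$ into a non-leaf, so $u$'s colour must change; but the new colour of $u$ depends on the sibling of $g$ and its children, none of which is visible in the colour of $e$. The replacement vector is therefore not a function of the drawn colour alone, so the process is not a \Polya{} urn in the required sense. (Dropping the leaf-status information at depth two does not rescue this: one then loses the ability to distinguish the configurations needed to express $Y_n$ as a linear functional of the colour counts.)

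The paper's resolution is to colour \emph{fragments} rather than gaps. Erase every edge joining two internal non-leaves; each resulting small tree has a non-leaf root with only leaves or external nodes below it. For $m=2$ there are exactly five such fragment types, with activities $4,3,2,1,0$ equal to their gap counts. The key point is that an insertion affects only the fragment containing the chosen gap---it either changes type or splits in two---so the replacement depends solely on the drawn type and (A1) holds with $\xi_{ii}\ge -1$. The protected nodes are precisely the roots of fragments of types $4$ and $5$, so $Y_n=X_{n4}+X_{n5}$ exactly; the $5\times5$ matrix $A$ has simple eigenvalues $1,0,-2,-3,-4$, \refT{simplepolyathm} applies directly, and the constants $11/30$ and $29/225$ fall out of a short linear-algebra computation.
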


\begin{rem} \label{expectedvalues} 
Theorems \ref{main} and  \ref{binary} imply that
$\frac{E(Z_n)}{n} \rightarrow \frac{57}{700}$ and $\frac{E(Y_n)}{n}
\rightarrow \frac{11}{30} $.  
Recall that 
necessary and sufficient conditions for $ L^{1} $ convergence of a sequence
$ (X_n) $ of random variables, are that $X_n\stackrel{p}{\rightarrow} X $
(where $ \stackrel{p}\rightarrow  $ denotes convergence in probability) and
that the sequence $ (X_n) $ is uniformly integrable. 
Note that Theorems \ref{main} and  \ref{binary} imply 
$ \frac{Z_n}{n} \stackrel{p}{\rightarrow} \frac{57}{700}$ and
$ \frac{Y_n}{n} \stackrel{p}{\rightarrow} \frac{11}{30}$, respectively,
since if a sequence of random variables converges in distribution to a constant it also converges in probability to that constant.
 Since  $0\le \frac{Z_n}{n} \leq 1$ and  $0\le \frac{Y_n}{n} \leq 1$,
 uniformly integrability for the sequences $ (\frac{Z_n}{n}) $ and $
 (\frac{Y_n}{n}) $ obviously holds. Hence, $ (\frac{Z_n}{n}) $ and $
 (\frac{Y_n}{n}) $ converge in $ L^{1} $ to $ \frac{57}{700} $ and
 $\frac{11}{30} $, respectively; in particular,
 $\frac{E(Z_n)}{n} \rightarrow \frac{57}{700}$ and $\frac{E(Y_n)}{n}
 \rightarrow \frac{11}{30} $.

We conjecture that also the variances (and higher moments) converge in Theorems
\ref{main} and  \ref{binary}.
\end{rem}

The methods apply to larger $m$ too, at least in principle,
see Sections \ref{Polya} and \ref{Sm}.

Similarly, we may consider the one-protected nodes, i.e.\ the non-leaves. 
These are easier to analyze than the two-protected nodes and
using a minor variation of a \Polya{} urn earlier used to
study the total number of nodes
\cite{Mahmoud2,Janson,Mahmoud:Polya}, 
we prove 
in Sections  \ref{Sleaves3} and \ref{oneprotected}
normal limit laws for 
the number of one-protected nodes and the number of leaves
in an $ m $-ary search tree for all $ m\leq 26 $.

\subsection{Protected nodes in $m$-ary search trees described as generalised \Polya{} urns}

\subsubsection{A generalised \Polya{} urn}
A (generalised) \Polya{} urn process is defined as follows,
see e.g.\ \cite{Janson} or \cite{Mahmoud:Polya}.
There are balls of $ q $ types (or colours) $ 1,\dots, q $, and
for each $ n $ a random vector $ X_n=(X_{n,1},\dots,X_{n,q}) $ , where $
X_{n,i} $ is the number of balls of type  
$ i $ in the urn at time $ n $. The urn starts with a given vector $ X_0
$. For each type $ i $, there is an activity (or weight) $ a_i\geq 0 $  and
a random vector $ \xi_i=(\xi_{i1},\dots, \xi_{iq}) $. The urn evolves
according to a Markov process. At each time $ n\geq 1 $, 
one ball is drawn at random from the urn, with the probability of any ball
proportional to its activity. Thus, the drawn ball has  type $ i$  
with probability $ \frac{a_iX_{n-1,i}}{\sum_{j}a_jX_{n-1,j}} $. 
If the drawn ball has type $i$, it is replaced
together with $\Delta X_{n,j}\iii$ balls of type $j$, $j=1,\dots,n$,
where the vector 
$ \Delta X_{n}\iii=(\Delta X_{n,1}\iii,\dots, \Delta X_{n,q}\iii) $
has the same distribution as $ \xi_i$ and is
independent of everything else that has happened so far.  
(We allow $\Delta X_{n,i}\iii=-1$, which means that the drawn ball is
  \emph{not} replaced.)
We let $ A $ denote the $ q\times q $ matrix
\begin{equation}\label{A}
A=(a_j\E\xi_{ji})_{i,j=1}^{q}.   
\end{equation}
The matrix $ A $ with its eigenvalues and
eigenvectors is central for proving limit theorems. 

The basic assumptions in \cite{Janson} are the following.
We say that a type $i$ is \emph{dominating} if every other type $j$ may
appear at some time in an urn started with a single ball of type $i$.
\begin{itemize}
\item[(A1)] $ \xi_{ij}\geq 0 $ for $ j\neq i $ (i.e., balls of other types
  than the drawn ball are   never removed);
$ \xi_{ii} \geq -1$.
\item[(A2)]
$ \E(\xi_{ij}^{2})<\infty $ for all $ i,j\in\{1,\dots,q\} $.
\item [(A3)] 
The largest eigenvalue $ \lambda_1 $ of $ A $ is positive.
\item [(A4)]
The largest eigenvalue $ \lambda_1 $ is simple.
\item [(A5)] 
There exists a dominating type $ i $ with $ X_{0i}>0 $, i.e., we start with
at least one ball of a dominating type.
\item [(A6)]
$ \lambda_1 $ is an eigenvalue of the submatrix of $A$ given by the
  dominating types.
\end{itemize}
Furthermore, \cite{Janson} says that the process becomes \emph{essentially
  extinct} if at some  time there are no balls of any dominating type left.
We will also use the following simlifying assumption.
\begin{itemize}
  \item [(A7)]
With probability 1, the urn never becomes essentially extinct.
\end{itemize}

In the \Polya{} urns used in this paper, it is easily seen (from the
definitions using trees) that every type with non-zero activity is
dominating.  If we remove rows and 
columns corresponding to the types with activity 0 from $A$, then 
 the removed columns are identically 0, so the set of non-zero eigenvalues
 of $A$ is not changed. The remaining matrix is irreducible, and using
the Perron--Frobenius theorem, it is easy to verify 
all conditions (A1)--(A6), see \cite[Lemma 2.1]{Janson}. Furthermore,
in our urns there will always be a ball of positive activity,
so essential extinction is impossible.

Before stating the results that we use, we need some notation. 
With a vector $ v $ we
mean a column vector, and we write $ v' $ for a row vector.  
We denote the transpose of a matrix $ A $ as $ A' $. By an eigenvector of $
A $ we mean a right eigenvector, a left eigenvector is the same as an
eigenvector of the matrix $ A' $. If $ u $ and $ v $ are vectors then $ u'v
$ is a scalar while $ uv' $ is a $ q\times q $ matrix. We also use the
notation $ u\cdot v $ for $ u'v $. 
We let $ \lambda_1 $ denote the largest eigenvalue. 
Let $ a=(a_1,\dots,a_q) $ denote the
(column) vector of activities, and let $ u_1 $ and $ v_1 $ denote left and
right eigenvectors of $A$ corresponding to the 
largest eigenvector $ \lambda_1 $, i.e., vectors satisfying   
\begin{align*}
u_1'A=\lambda_1 u_1',&&& Av_1=\lambda_1 v_1.  
\end{align*}
We assume that $ v_1 $ and $ u_1 $ are normalized such
that 
\begin{align}\label{normalized} a\cdot v_1=a'v_1=v_1'a=1,
&&&
u_1\cdot  v_1=u_1'v_1=v_1'u_1=1,  
\end{align} see \cite[equations (2.2)--(2.3)]{Janson}. 
We write $ v_1=(v_{11},\dots,v_{1q}) $.

We define   
$$P_{\lambda_1} =v_1u_1', $$ and $ P_{I}=I_{q}-P_{\lambda_1} $, where $
I_{q} $ is the $ q\times q $ identity matrix. 
(Thus $ P_{\lambda_1} $ is a one-dimensional projection onto the eigenspace
corresponding to $\gl_1$, 
such that $P_{\gl_1}$ commutes with the matrix $ A $, see \cite[equation
  (2.2)]{Janson}). We 
define the matrices 
\begin{align}\label{Bi} B_i &:=\E(\xi_i\xi_i')\\\label{B}B&:=\sum_{i=1}^{q}v_{1i}a_iB_i\\
\label{Sigma}
\Sigma_I&:=\int_{0}^{\infty}P_{I}e^{sA}Be^{sA^{'}}P_{I}'e^{-\lambda_1s}ds,
\end{align}
where we recall that $ e^{tA}=\sum_{j=0} ^{\infty}\xfrac{t^{j}A^{j}}{j!}$.

It is proved in \cite{Janson} that, under assumptions (A1)--(A7), $X_n$ is
asymptotically normal if
$\Re\lambda\le\lambda_1/2 $ for each eigenvalue $ \lambda\neq \lambda_1 $;
more precisely, if 
$\Re\lambda<\lambda_1/2 $ for each such $ \lambda$, 
then 
$n^{-1/2} (X_n-n\mu)\stackrel{d}\rightarrow \N(0,\Sigma)$ for some $\mu$ and
$\Sigma$. The asymptotic covariance matrix $\Sigma$ may be calculated in
different ways; 
we use the following results from \cite{Janson}, which apply under different
additional assumptions. 

\begin{thm}[{\cite[Theorem 3.22 and Lemma 5.4]{Janson}}]\label{polyathm}
Assume \textup{(A1)--(A7)} and that  we have normalized as in
\eqref{normalized}. 
Also assume that\/ $\Re\lambda<\lambda_1/2 $
for each eigenvalue $ \lambda\neq \lambda_1 $.
Suppose that $
a\cdot\E(\xi_i)=m $ for some $ m>0 $ and every $ i $. 
Then, as $ n\rightarrow \infty $, 
$$n^{-1/2} (X_n-n\mu)\stackrel{d}\rightarrow \N(0,\Sigma),$$
with 
$\mu=\gl_1v_1$ and 
covariance matrix $ \Sigma $ equal to $ m \Sigma_I$, with $ \Sigma_I $
as in \eqref{Sigma}. 
\qed
\end{thm}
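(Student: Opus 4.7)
My plan is to follow the standard continuous-time embedding of a generalised \Polya{} urn into a multi-type Markov branching process. Attach to each ball of type $i$ an independent $\Exp(a_i)$ clock; when a clock rings, replace that ball by the offspring vector $\xi_i$, independently of the past. Let $\tilde X(t)$ be the resulting $\mathbb{Z}^q$-valued pure-jump Markov process and $\tau_1<\tau_2<\cdots$ its successive jump times. Since the next ball to fire is distributed as a sample weighted by activities, $\tilde X(\tau_n)\eqd X_n$, so it suffices to prove a central limit theorem for $\tilde X$ in continuous time and then de-Poissonise.

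For the continuous-time analysis I would combine spectral theory with a martingale CLT in the style of Athreya--Ney. The mean of $\tilde X(t)$ grows like $e^{\lambda_1 t}$ along the direction $v_1$, since by (A3)--(A4) $\lambda_1$ is positive and simple. A standard computation shows that $M(t):=e^{-\lambda_1 t}\,u_1\!\cdot\!\tilde X(t)$ is a nonnegative martingale, and (A1), (A2), (A5)--(A7) ensure that it is $L^2$-bounded and $M(t)\to W$ a.s.\ with $W>0$. Decomposing $\tilde X(t)=(u_1\!\cdot\!\tilde X(t))v_1+P_I\tilde X(t)$ separates leading-order growth from fluctuations, and writing the fluctuation component as a stochastic integral against the compensated jump measure gives an instantaneous covariance rate $\sum_i a_i\tilde X_i(t)B_i$, which for large $t$ concentrates around $We^{\lambda_1 t}B$ with $B$ as in \eqref{B}. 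The spectral gap hypothesis $\Re\lambda<\lambda_1/2$ for $\lambda\ne\lambda_1$ is precisely what makes the Lyapunov integral \eqref{Sigma} converge, and a martingale CLT then yields the mixed Gaussian limit $e^{-\lambda_1 t/2}P_I\tilde X(t)\dto\N(0,W\Sigma_I)$, conditionally on $W$.

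To de-Poissonise I would exploit the hypothesis $a\cdot\E\xi_i=m$. A direct computation gives $a'A=ma'$, so $a$ is a positive left eigenvector of $A$; by Perron--Frobenius applied to the dominating subsystem this forces $m=\lambda_1$ and, using simplicity, identifies $u_1$ with $a$ (after the normalisation \eqref{normalized}). Consequently $a\cdot X_n-mn$ is a martingale with bounded increments and hence $O(\sqrt n)$; equivalently, in continuous time $a\cdot\tilde X(\tau_n)$ is essentially deterministic, pinning down $e^{\lambda_1\tau_n}\sim\lambda_1 n/W$. Substituting $t=\tau_n$ transforms $e^{-\lambda_1 t/2}P_I\tilde X(t)$ into a scalar multiple of $n^{-1/2}(X_n-n\mu)$ with $\mu=\lambda_1 v_1$, and the random $W$ in the conditional covariance cancels exactly against the random time-scale factor, producing the deterministic limit $m\Sigma_I$. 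The main obstacle I would expect is precisely this cancellation: one must establish joint weak convergence of $\bigl(W,\,e^{-\lambda_1\tau_n/2}P_I\tilde X(\tau_n)\bigr)$ together with sharp enough asymptotics for $\tau_n$ so that the $W$-dependence disappears from the final covariance. The hypothesis $a\cdot\E\xi_i=m$ is exactly what forces $\tau_n$ to concentrate enough for a clean Gaussian (rather than scale-mixture) limit.
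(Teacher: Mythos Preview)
The paper does not give its own proof of this theorem: the statement is quoted verbatim from \cite{Janson} (Theorem~3.22 together with Lemma~5.4) and closed with a \qedsymbol\ immediately after the display, so there is nothing in the present paper to compare your argument against beyond the citation itself.

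That said, your sketch is a faithful outline of the strategy in \cite{Janson}. The continuous-time embedding into a multitype branching process, the spectral decomposition $X=(u_1\!\cdot\!X)v_1+P_IX$, the martingale CLT for the $P_I$-component yielding a mixed normal with conditional covariance $W\Sigma_I$, and the de-Poissonisation step are exactly the ingredients used there. Your observation that $a'A=ma'$ and hence $u_1\propto a$ with $m=\lambda_1$ is precisely the content of \cite[Lemma~5.4]{Janson}, and it is this identification that makes $a\cdot X_n$ close to deterministic and lets the random factor $W$ cancel between the time change and the conditional variance.

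Two small points of precision. First, under (A2) the increments $a\cdot\xi_i-m$ are only square-integrable, not bounded, so ``bounded increments'' should be replaced by an $L^2$ argument; this still gives $a\cdot X_n-mn=O_P(\sqrt n)$. Second, the hard technical work in \cite{Janson} is exactly the joint convergence and cancellation you flag as ``the main obstacle''; your sketch correctly identifies where the effort lies but does not, of course, carry it out. As a high-level proof plan it is correct and matches the cited source.
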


\begin{thm}[{\cite[Theorem 3.22 and Lemma 5.3]{Janson}}]\label{simplepolyathm}
Assume \textup{(A1)--(A7)}, and that  we have normalized as in
\eqref{normalized}. 
Also assume that\/ $\Re\lambda<\lambda_1/2 $
for each eigenvalue $ \lambda\neq \lambda_1 $.
If the matrix $ A $ is
diagonalisable, and  
$ \{u_i\}_{i=1}^{q} $ and $\{v_i\}_{i=1}^{q} $ are dual bases of left
and right eigenvectors, respectively, i.e., $ u_iA=\lambda_iu_i $,
 $Av_i=\lambda_iv_i $ and $ u_i\cdot v_j= \delta_{ij} $ (where $\delta_{ij}$
is the Kronecker delta, and the $ \lambda_i $, $ i=1,\dots,q $, do
not have to be distinct). 
Then, as
$ n\rightarrow \infty $, 
$$n^{-1/2} (X_n-n\mu)\stackrel{d}\rightarrow \N(0,\Sigma),$$
with 
$\mu=\lambda_1v_1$ and
covariance matrix $ \Sigma $ equal to 
\begin{align}\label{simpleSigma}
\Sigma=\sum_{j,k=2}^{q} 
\frac{u_j'Bu_k}{\lambda_1-\lambda_j-\lambda_k}v_jv_k',
\end{align}  with the matrix $ B $ as in \eqref{B}.
\qed
\end{thm}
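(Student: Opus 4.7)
The plan is to derive the closed-form covariance \eqref{simpleSigma} from the integral representation \eqref{Sigma} by exploiting the spectral decomposition of $A$. The asymptotic normality of $n^{-1/2}(X_n-n\mu)$ towards $\N(0,\Sigma)$ with $\mu=\lambda_1v_1$ is already guaranteed by the general \Polya{} urn theory of \cite{Janson} under assumptions (A1)--(A7) together with $\Re\lambda<\lambda_1/2$ for every non-principal eigenvalue $\lambda$ (the same hypotheses that drive \refT{polyathm}); the only substantive task is therefore to rewrite the asymptotic covariance matrix in closed form in the diagonalisable case.

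First, I would expand the matrix exponential using the dual bases. Since $\{v_i\}$ and $\{u_i\}$ are dual sets of right and left eigenvectors of $A$ with $u_i\cdot v_j=\delta_{ij}$, we obtain the spectral representations
\begin{equation*}
A=\sum_{i=1}^{q}\lambda_i v_i u_i',\qquad e^{sA}=\sum_{i=1}^{q}e^{\lambda_i s}\,v_i u_i',\qquad e^{sA'}=\sum_{i=1}^{q}e^{\lambda_i s}\,u_i v_i'.
\end{equation*}
The operator $P_{\lambda_1}=v_1u_1'$ is the spectral projector onto the $\lambda_1$-eigenspace, and hence $P_I=I_q-v_1u_1'=\sum_{i=2}^{q}v_i u_i'$. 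Multiplying out and using the duality relations,
\begin{equation*}
P_I e^{sA}Be^{sA'}P_I'e^{-\lambda_1 s}=\sum_{j,k=2}^{q}e^{(\lambda_j+\lambda_k-\lambda_1)s}\,(u_j'Bu_k)\,v_j v_k'.
\end{equation*}
Substituting into \eqref{Sigma} and integrating termwise, the eigenvalue hypothesis yields $\Re(\lambda_j+\lambda_k-\lambda_1)<0$ for all $j,k\geq 2$, so each elementary integral $\int_0^\infty e^{(\lambda_j+\lambda_k-\lambda_1)s}\,ds=(\lambda_1-\lambda_j-\lambda_k)^{-1}$ converges, and summing produces exactly \eqref{simpleSigma}.

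The main obstacle is the bookkeeping when $A$ has complex or repeated eigenvalues. Complex eigenvalues come in conjugate pairs and the associated eigenvectors $u_i,v_i$ are then complex; one must check that the conjugate contributions combine to produce a real matrix, which is automatic because $B$ is real and the pairing in \eqref{simpleSigma} respects complex conjugation. Repeated eigenvalues cause no trouble under the diagonalisability hypothesis, since dual bases can be chosen within each eigenspace; what is crucial throughout is the normalisation \eqref{normalized}, which guarantees that $P_{\lambda_1}=v_1u_1'$ is really the spectral projector and not merely proportional to one. Beyond these algebraic issues the computation is routine.
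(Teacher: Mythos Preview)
The paper does not supply a proof; the theorem is quoted from \cite{Janson} (Theorem~3.22 combined with Lemma~5.3) and closed with a \qed. Your sketch is correct and is essentially how Lemma~5.3 of \cite{Janson} is proved: granted the asymptotic normality and the integral representation \eqref{Sigma} from the general theory, the spectral identities $P_I=\sum_{i\ge2}v_iu_i'$ and $e^{sA}=\sum_i e^{\lambda_i s}v_iu_i'$ reduce the integrand to $\sum_{j,k\ge2}e^{(\lambda_j+\lambda_k-\lambda_1)s}(u_j'Bu_k)\,v_jv_k'$, and termwise integration (legitimate since each exponent has negative real part) yields \eqref{simpleSigma}. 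Your remarks on complex conjugate pairs and repeated eigenvalues are also apposite.
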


\subsubsection{$M$-ary search trees}\label{mary} 
We recall the definition of $ m $-ary search trees, see 
e.g.\ \cite{Mahmoud:Evolution} or \cite{Drmota}.
An  $ m $-ary search tree, where $ m\geq 2 $, is constructed recursively
from a sequence of $n$ \emph{keys} (numbers). We assume that the keys 
are i.i.d.\ uniform random numbers in $[0,1]$.
(Only the order of the keys matter, so alternatively, we may assume that the
keys form  a uniformly random permutation of $\set{1,\dots,n}$.)
Each node may contain up to $ m-1 $ keys.  
We start with a tree containing just an empty root. 
The first $ m-1 $ keys are put in the root, and are placed in increasing
order from left to right; they divide the set of real numbers into $m$
intervals $J_1,\dots,J_m$.
When the root is full (after the first $ m-1 $ keys are added), it gets $ m$
children that are initially empty, 
and each further key is passed to one of the children
depending on which interval it belongs to; a key in $J_i$ is passed to the 
$i$:th child.
(The binary search tree is the
simplest case where keys  are passed to the left or right child
depending on whether it is larger or smaller than the key in the root.) 
The procedure repeats recursively in the subtrees until all keys are added
to the tree.  

Nodes that contain at least one key are called \emph{internal}, while empty
nodes  are called \emph{external}. We regard the $m$-ary search tree as
consisting only of the internal nodes; the external nodes are places for
potential additions, and are useful when discussing the tree (e.g.\ below),
but are not 
really part of the tree. Thus, a \emph{leaf} is an internal node that has no
internal children, but it may have external children. (It will have external
children if it is full, but not otherwise.)
Similarly, a protected node is an internal node that is not a leaf,
and has no child that is a leaf. (It may have external nodes as children.)

We say that a node with $i\le m-2$ keys has $i+1$ \emph{gaps}, while a full
node has no gaps. It is easily seen that a $m$-ary search tree with $n$ keys
has $n+1$ gaps; the gaps correspond to the intervals of real numbers between
the keys (and $\pm\infty$), 
and a new key  
has the same probability $1/(n+1)$ of belonging to any of the gaps.
Thus the evolution of the $m$-ary search tree may be described by choosing a
gap uniformly at random at each step. Equivalently, the probability that the
next key is added to a node is proportional to the number of gaps at that node.

\Polya{} urns have been used 
in some earlier studies,
e.g.\ \cite{Mahmoud2,Janson}, 
to
describe the number of nodes in $ m $-ary search trees containing $ i $ keys
where $ 0\leq i\leq m-1 $; then a node containing $ i $ keys is called a
node of type $ i $ and thus the generalised \Polya{} urn has $ m $ different
types. It has been shown that for this process, when $ m\leq 26 $ the number
of different types  has an asymptotic multivariate normal distribution, but
this does not hold for larger $ m $. (Since the condition $\Re \gl <
\gl_1/2$ for $\gl\neq\gl_1$ on the eigenvalues of the matrix $A$ in
\eqref{A} holds only if $m\le26$.)
Since the number of nodes in the whole tree is a linear combination of these
numbers, this implies in particular that the  distribution of the
random number of nodes in an $ m $-ary search tree containing $ n $ keys is
asymptotically normal for $ m\leq 26 $. 
In this \Polya{} urn, with one ball representing each node, the activity of
a ball is the number of gaps, i.e., $i+1$ for a ball of type $i\le m-2$, and
$0$ for a ball of type $m-1$.

Alternatively, see \cite{Janson}, we can use a \Polya{} urn where each ball
represents a gap; thus a node with $ i $ keys corresponds to $ i+1 $ balls
for $ 0\leq i\leq m-2 $, and these balls are all given type $i$.
(Full nodes are ignored.)
This is thus an  urn with $ m-1 $ types, all with activities 1.

\subsubsection{Protected nodes and generalised \Polya{} urns}\label{Polya}

We will see that it is possible to use  a generalised \Polya{} urn
also to study protected nodes in an $ m $-ary search tree, 
although the urn consists of quite a few different types.

 \textbf{Description of the Types in the \Polya{} urn.}
Given an $ m $-ary search tree $T$ with $ n $ keys 
together with its external
nodes, 
erase all edges that connect two internal non-leaves.
This yields  a forest of small trees, where (assuming $n\ge m$)
each tree has a root that is a non-leaf in $T$ while all other nodes
are leaves or external nodes in $T$.
We regard these small trees as the balls in our
generalised \Polya{} urn. The type of a ball (tree) is 
the type of the tree as an unordered tree, i.e., up to permutations of the
children. 
The type of a tree in the urn is thus described by the numbers $k_i$,
$i=0,\dots,{m-1}$, of  
children of the root with $i$ keys; each of these children is an external node
($i=0$) or a leaf ($i\ge1$), and it has itself children only when $i=m-1$
when it has $m$ external children; thus the type is uniquely determined
by $k_0,\dots,k_{m-1}$, and we can label the type by $(k_0,\dots,k_{m-1})$.
Since the root of any of the small trees has $m$ children 
(including external ones)
in the original tree $T$, 
we have $\sum_{i=0}^{m-1} k_i \le m$,  
(with the remainder  $m-\sum_{i=0}^{m-1} k_i$
equal to the number of erased edges to children in the original tree $T$
that are non-leaves). 
Furthermore, the case $k_0=m$ is excluded, since the root of the small tree is a
non-leaf in $T$.
The total number of types is thus one less than the number of compositions
of $m$ into $m+1$ non-negative parts, i.e.,
$\binom{2m}{m}-1$.

The activity in the \Polya{} urn of one of these types is the number of gaps
that it contains. The root has no gaps, so a tree with 
type $(k_0,\dots,k_{m-1})$ 
has activity $\sumimi (i+1)k_i$.
Moreover, if we add a new key to a leaf, it is still a leaf, so in the
\Polya{} urn, this corresponds to replacing a tree by another tree where we
have increased by 1 the number of keys of one of the children of the root.
The same holds if we add a key to an external node that is a child of the
root. However, if we add a key to an external node that is a child of a
leaf, then that leaf becomes a non-leaf, so the edge from it to the root is
erased and the tree is split into two (one of which always has the type
$(m-1,1,0,\dots,0)$). See  \refS{S2} for examples.
Note that in general, a small tree may be transformed in several different
ways when we add a new key, depending on which gap it goes into.
Hence, the additions $\xi_i$ in the \Polya{} urn will be random.

A protected node in $T$ is a non-leaf, and is therefore a root in one of the
small trees. Moreover, it must not have any child that is a leaf, so all its
children are external nodes.  Thus, the number of protected nodes in $T$
equals the number of balls in the urn that have types
$(k_0,0,\dots,0)$ with $0\le k_0\le m-1$.

\section{Protected nodes in binary search trees and \Polya{} urns}
\label{S2}
In this section we demonstrate the technique of using the \Polya{} urn
defined above to study the number of protected nodes, 
by applying it to the simplest case $m=2$, the binary search tree. 
This gives us a new proof of \refT{binary};
for earlier proofs, see 
\cite{MahmoudWard} and \cite{HolmgrenJanson}.

For a binary tree, the number of types in the \Polya{} urn defined above is
$\binom42-1=5$. We show the different types in Figure \ref{fig:typesbinary},
with a numbering that will be used below.
(For convenience we omit the external nodes in the figures. We use dotted
lines for edges attached to external nodes.) With our characterization of
the types in Section \ref{Polya}, the types $ i\in\{1,\dots,5\} $ correspond
to $ (0,2) $, $ (1,1) $,  
$ (0,1) $, $ (1,0) $ and $ (0,0) $, respectively.

\begin{figure}[t]
\begin{minipage}[t]{0.17\linewidth}
\centering
\includegraphics[width=\textwidth]{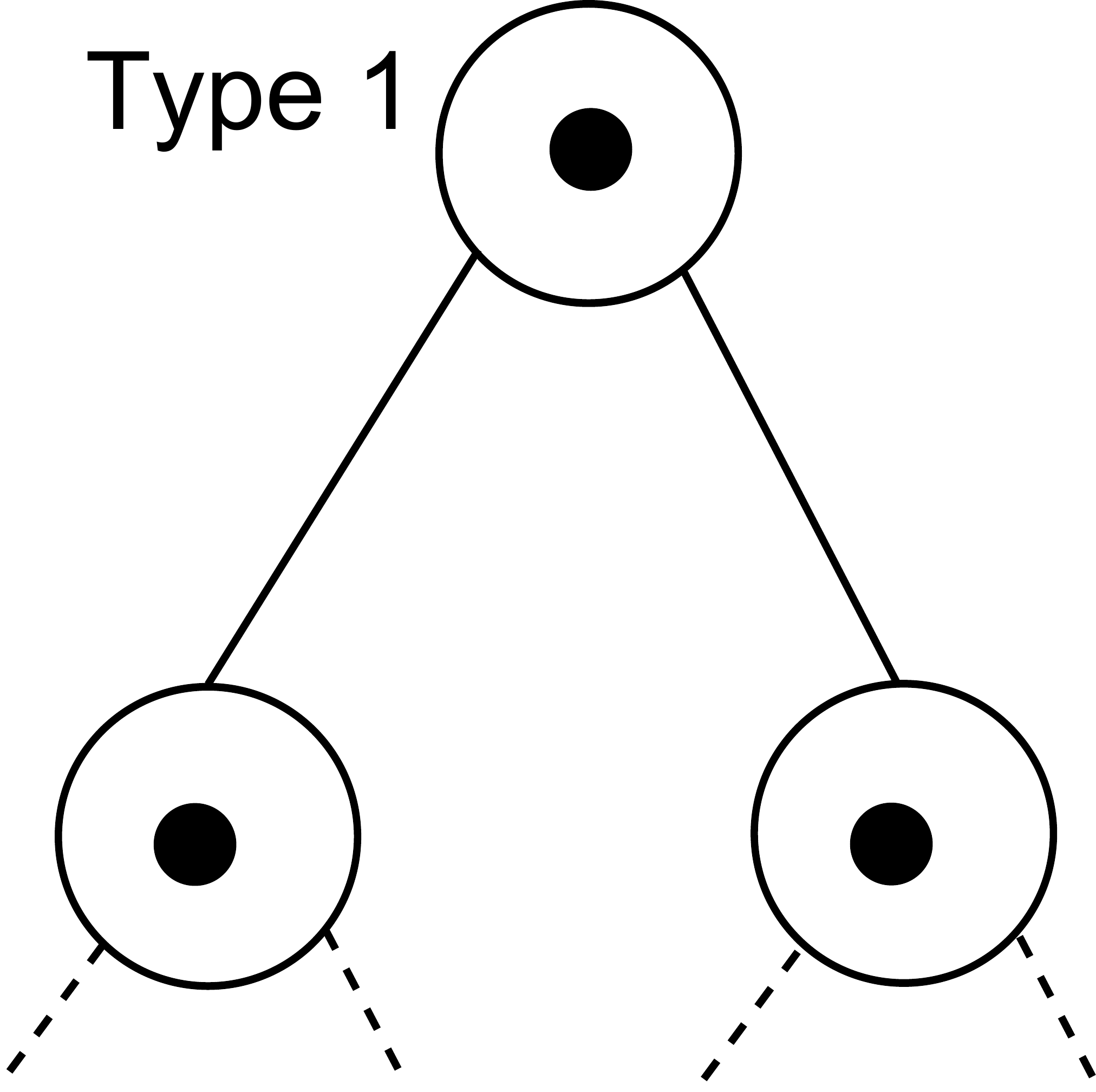}
\label{type1b}
\end{minipage}
\hspace{0.2cm}
\begin{minipage}[t]{0.14\linewidth}
\centering
\includegraphics[width=\textwidth]{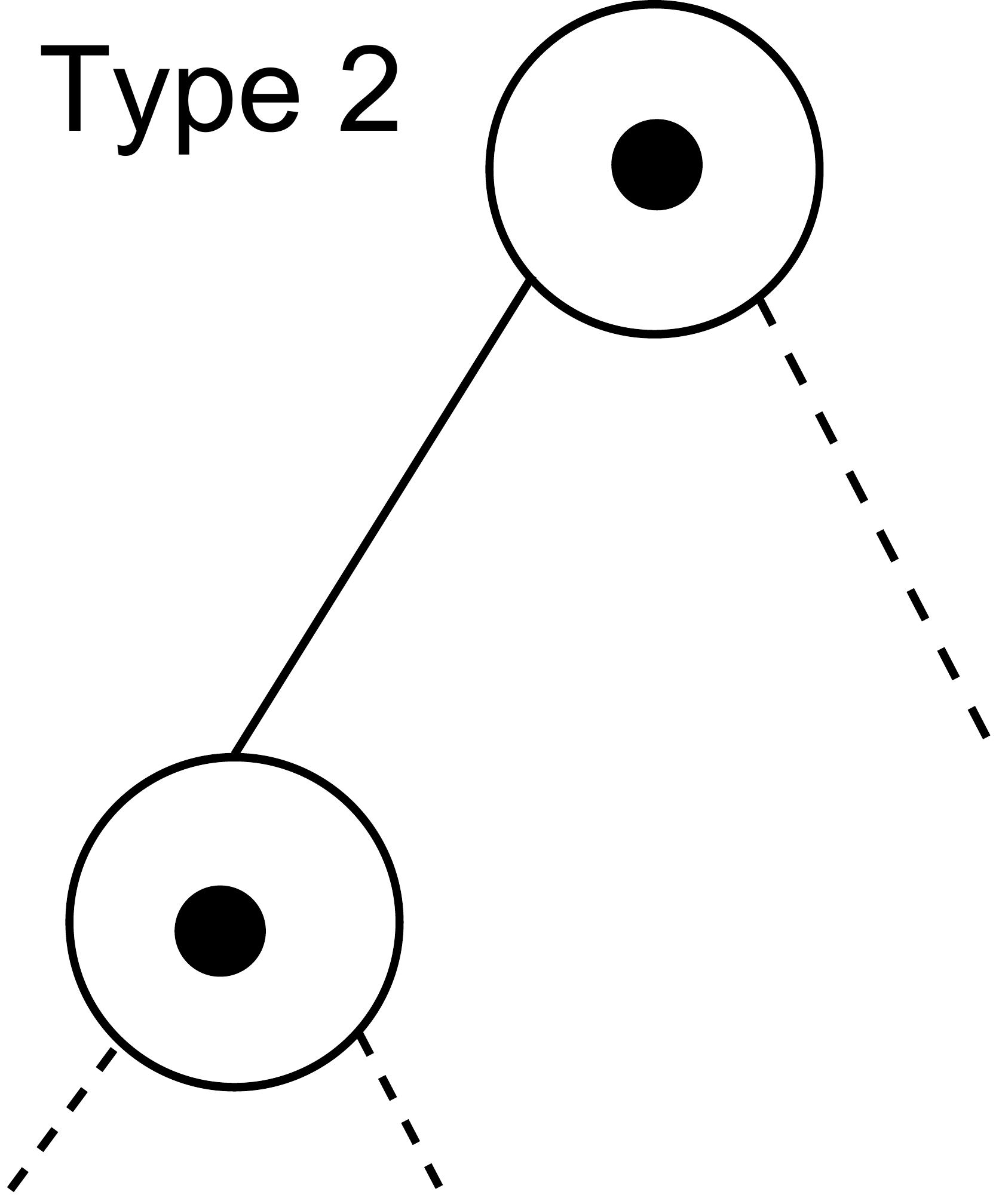}
\label{type2b}
\end{minipage}
\hspace{0.2cm}
\begin{minipage}[t]{0.115\linewidth}
\centering
\includegraphics[width=\textwidth]{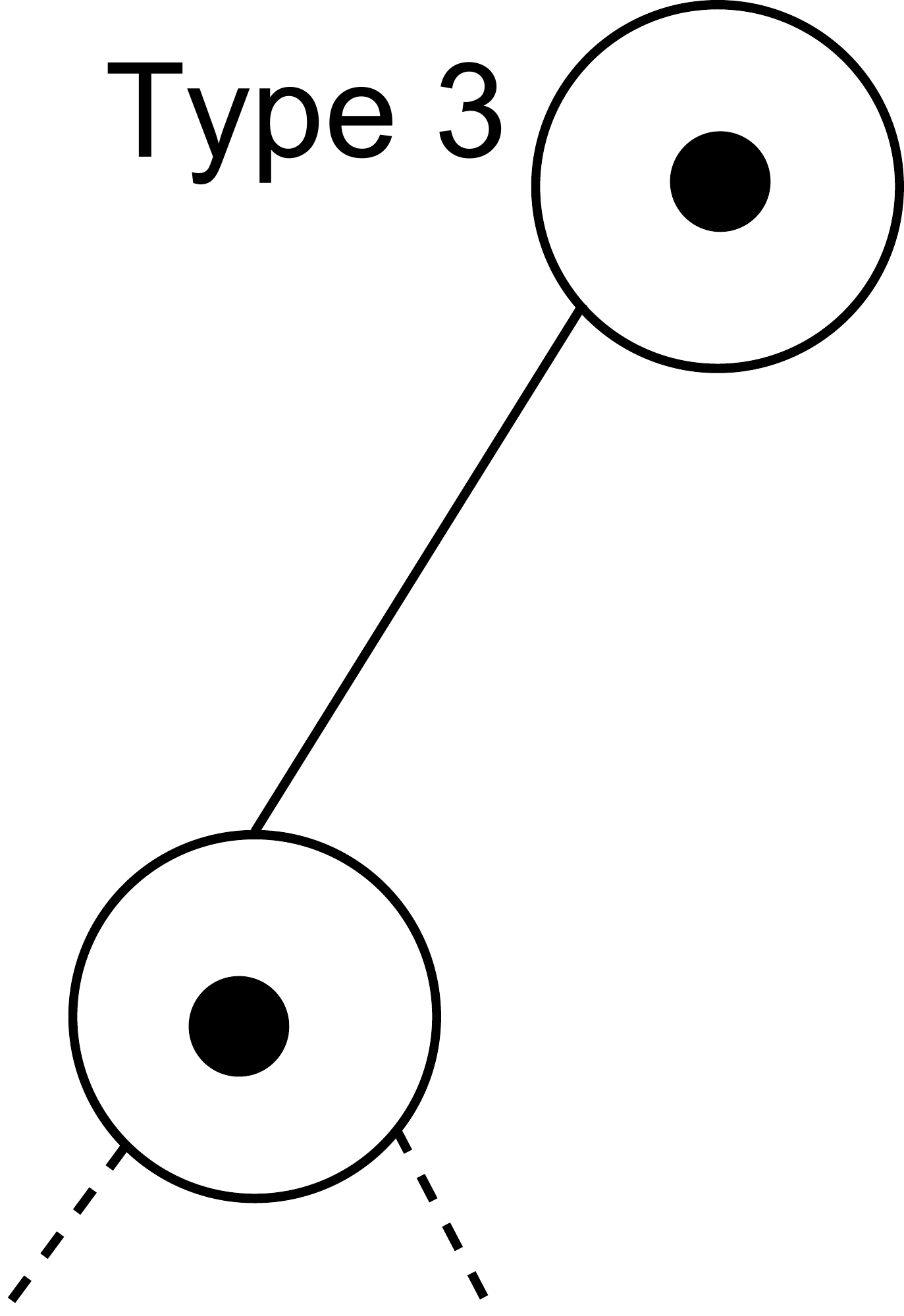}
\label{type3b}
\end{minipage}\hspace{0.2cm}
\begin{minipage}[t]{0.17\linewidth}
\centering
\raisebox{5pt}{
\includegraphics[width=\textwidth]{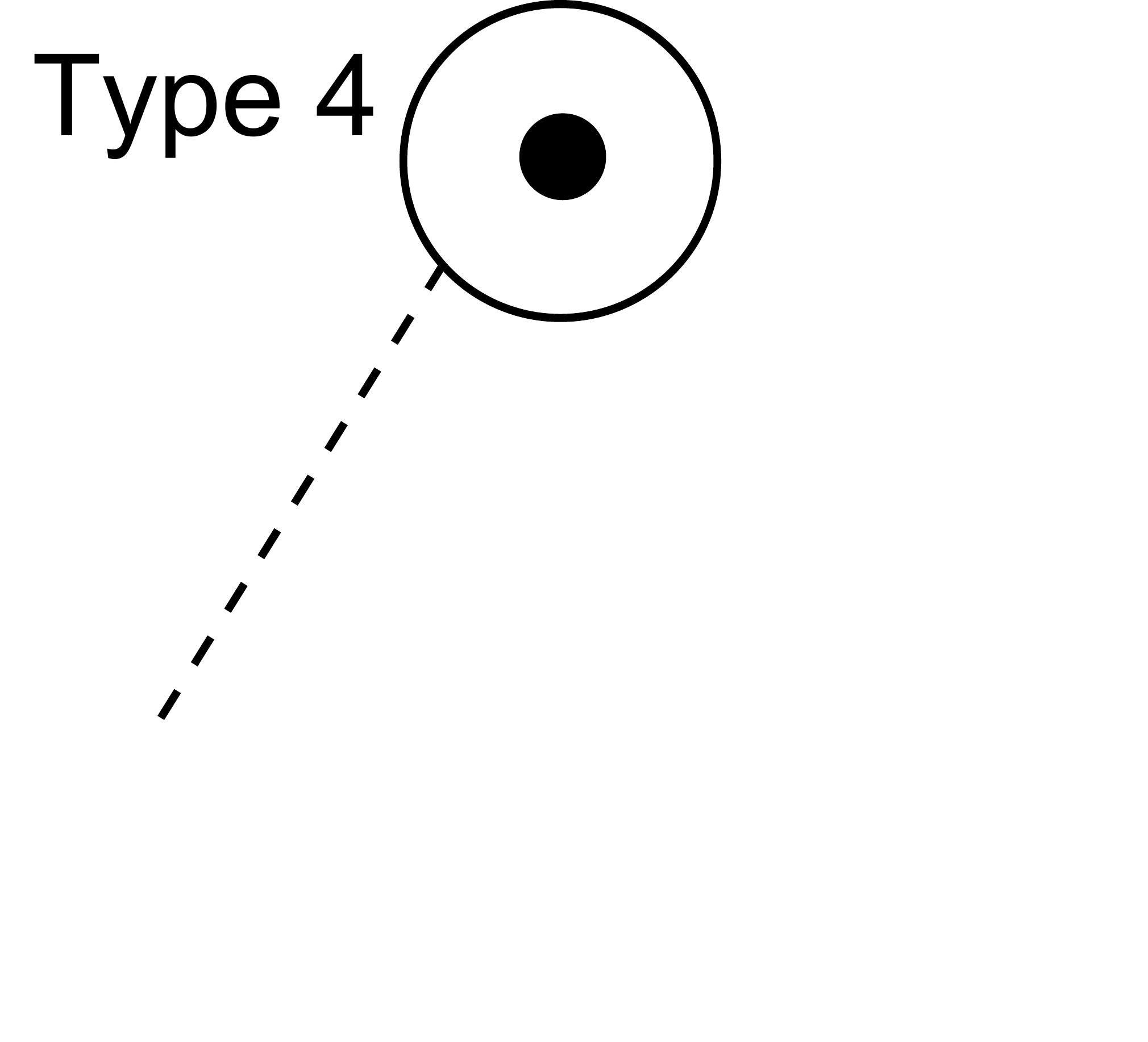}}
\label{type4b}
\end{minipage}\hspace{0cm}
\begin{minipage}[t]{0.17\linewidth}
\centering
\raisebox{5pt}{\includegraphics[width=\textwidth]{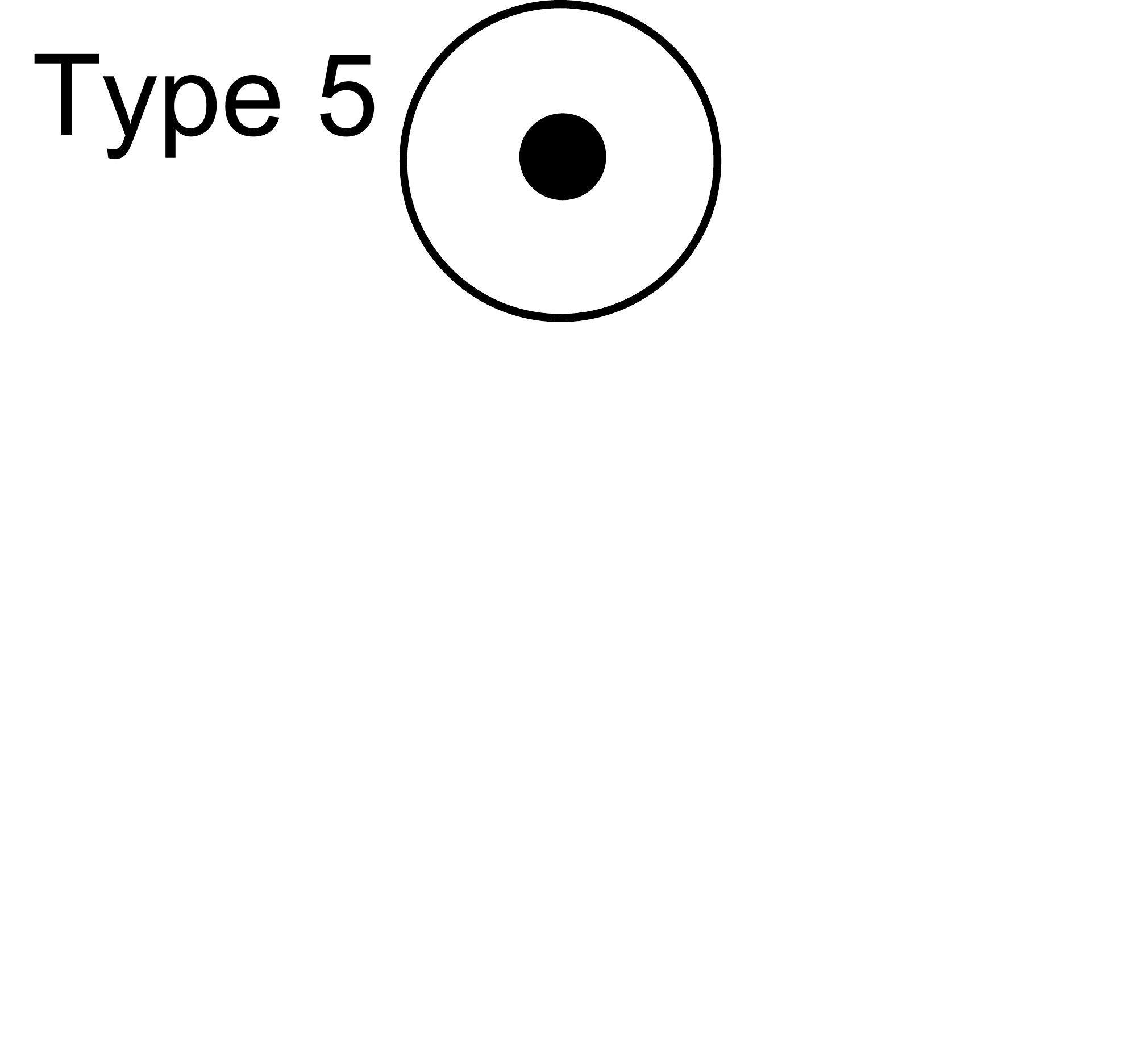}}
\label{type5b}
\end{minipage}
\caption{The different types characterizing protected and unprotected nodes in binary search trees. Type 4 and type 5 are the only ones that include protected nodes.}\label{fig:typesbinary}
\end{figure}

In a binary search tree, each leaf contains one key, so it has two external
children, whereas other internal nodes have either 1 or 0 external children.  
There is one gap at each external node, and no gaps at any internal node.
As explained in Section \ref{mary}, each gap (i.e.\ external node) has
activity 1.  

When a ball is drawn from the urn (i.e., a new key is added to the tree),
as explained in general in \refS{Polya},
a key is either added to an external node that is a
child of the root (we return a ball of another type), or to an external node
that is a child of a leaf (we return two balls).
Figures \ref{add1}--\ref{add4} show the transitions in the \Polya{} urn when
a ball of type $i$ for $ i\in\{1,2,3,4,5\} $ is drawn
(where the types are shown in Figure \ref{fig:typesbinary}), 
so that the drawn
ball is replaced by a new set of balls. (As said above, this set
could depend on which of the nodes in the drawn type the key is added to,
see Figure \ref{add2}.) 
The activities of the different types depend on
their number of gaps; the total activities for the types $ 1,2,3,4,5 $ are $
4,3,2,1,0 $, respectively; thus $ a=(4,3,2,1,0)' $.

\begin{figure}[t]\label{addbinary}
\begin{minipage}[t]{0.4\linewidth}
\centering
\includegraphics[width=\textwidth]{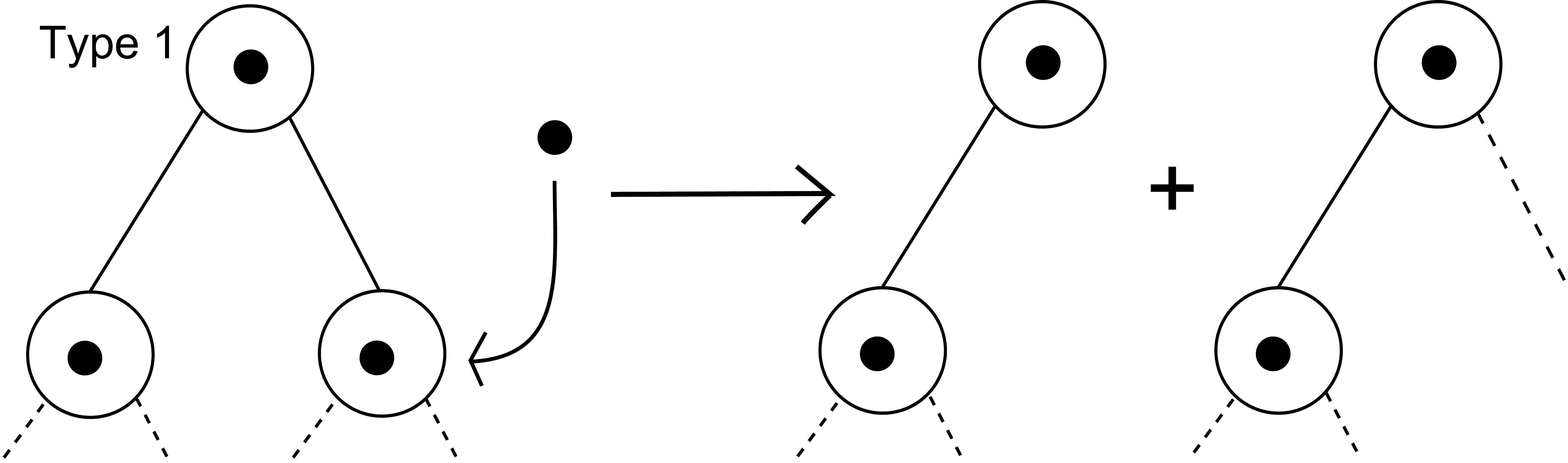}
\caption{Adding a key to type 1.}
\label{add1}
\end{minipage}
\hspace{1cm}
\begin{minipage}[t]{0.65\linewidth}
\centering
\includegraphics[width=\textwidth]{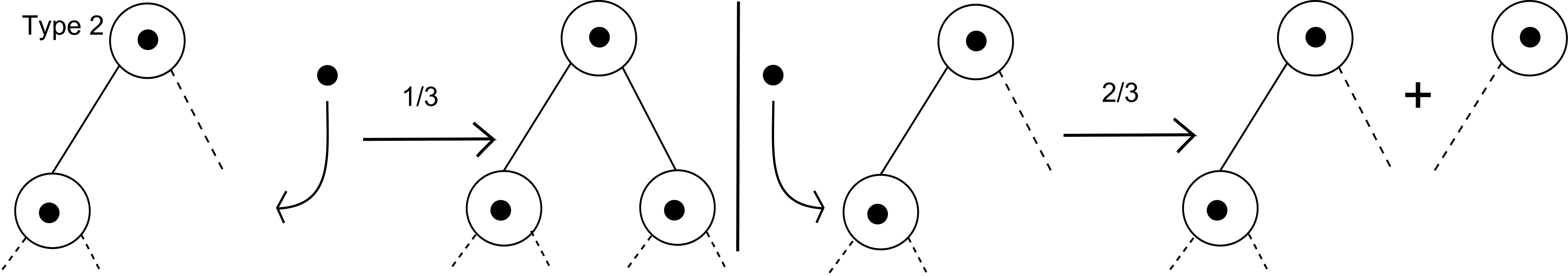}
\caption{Adding a key to type 2.}
\label{add2}\end{minipage}
\begin{minipage}[t]{0.33\linewidth}
\centering
\includegraphics[width=\textwidth]{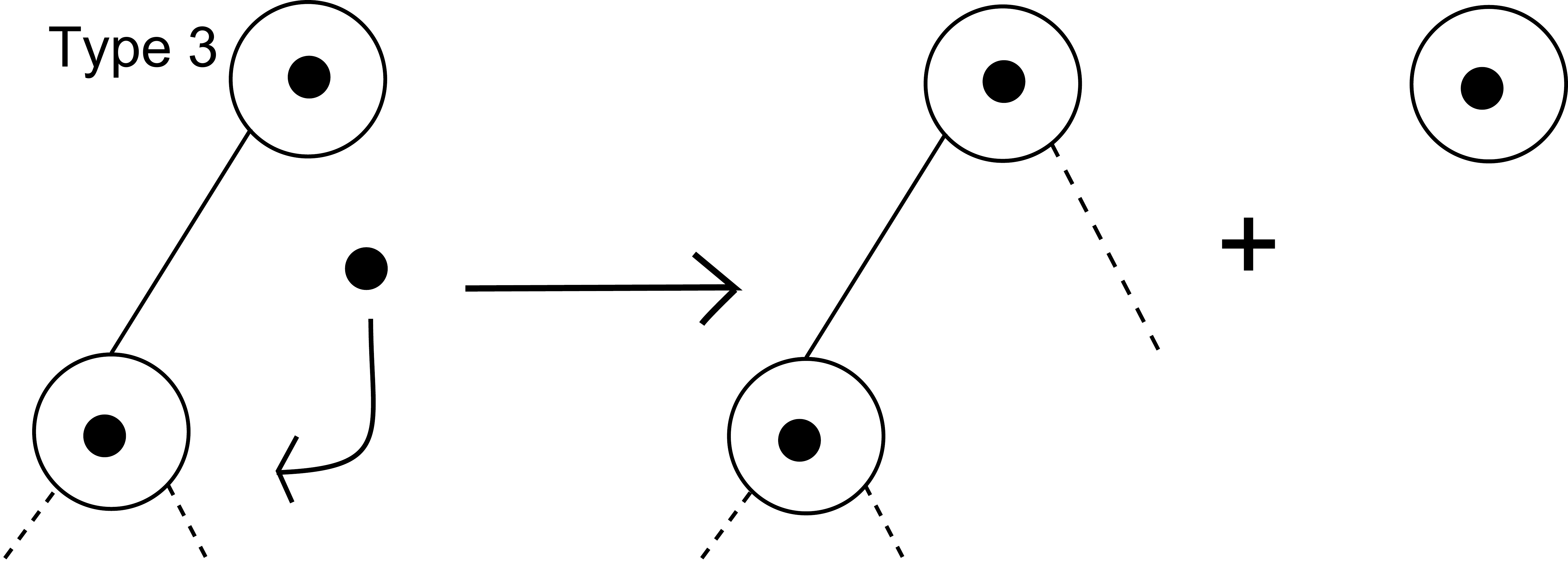}
\caption{Adding a key to type 3}
\label{add3}
\end{minipage}
\hspace{2cm}
\begin{minipage}[t]{0.22\linewidth}
\centering
\includegraphics[width=\textwidth]{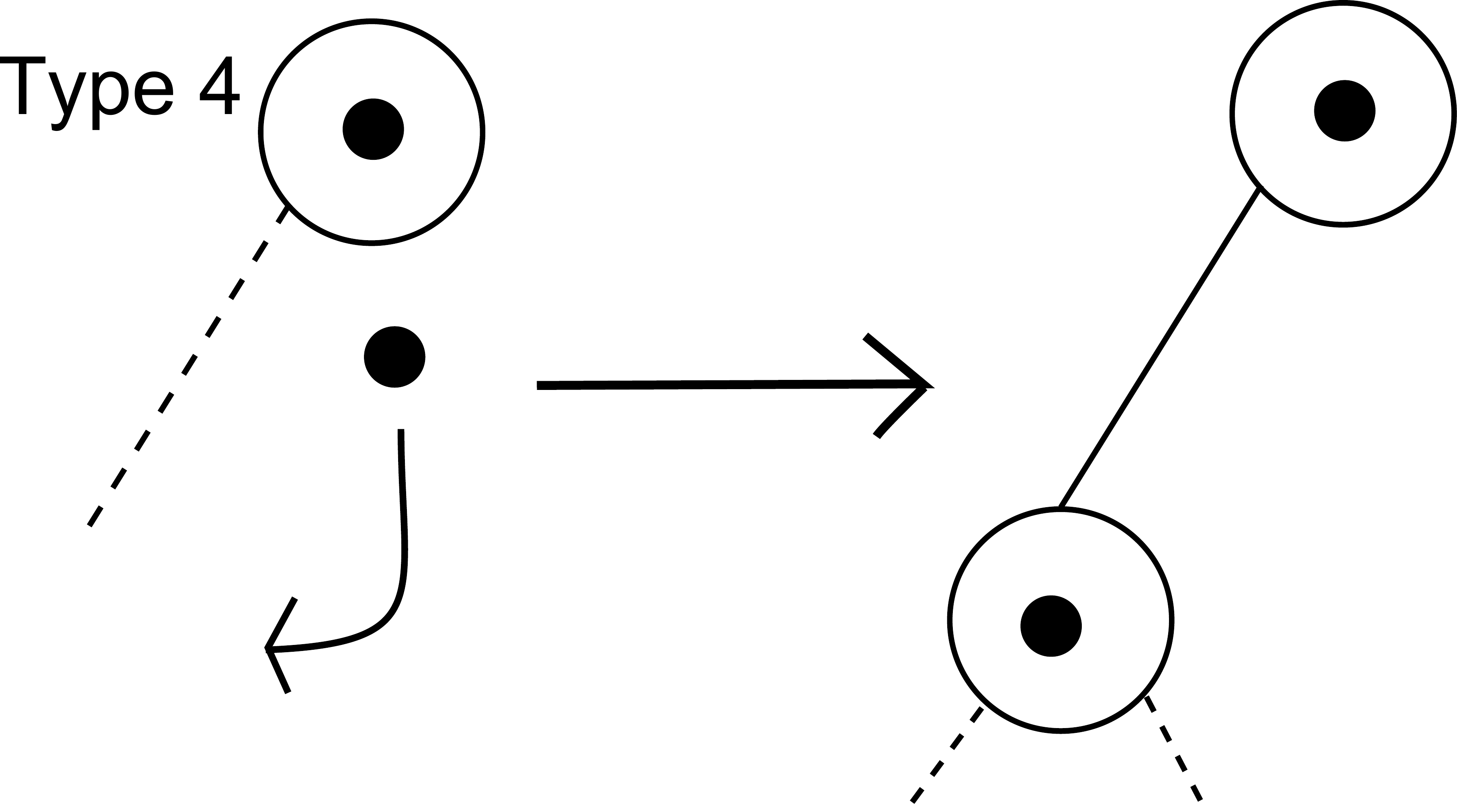}
\caption{Adding a key to type 4}
\label{add4}
\end{minipage}

\end{figure}

From the transitions that are shown in Figures \ref{add1}--\ref{add4}, we easily obtain  the matrix $A=(a_j\E\xi_{ji})_{i,j=1}^{5}$ in  (\ref{Abin}).

\begin{align}\label{Abin}
A=
\left(
\begin{array}{rrrrr}
 -4 & 1 & 0 & 0 & 0 \\
 4 & -1 & 2 & 0 & 0 \\
 4 & 0 & -2 & 1 & 0 \\
 0 & 2 & 0 & -1 & 0 \\
 0 & 0 & 2 & 0 & 0 \\
\end{array}
\right)
\end{align}
To do the matrix operations in this paper we use Mathematica, but one could
alternatively use e.g.\ Maple.
 
The eigenvalues of $A$ are 
$1,0,-2,-3,-4$.
Corresponding
right eigenvectors of $ A $ are:
\begin{align}\label{eigenbin}
\def\+{\phantom{-}}
\frac{1}{30}\begin{pmatrix} 
  1\\5\\3\\5\\6
\end{pmatrix}
,
\begin{pmatrix} 
  0\\0\\0\\0\\1
\end{pmatrix}
,
\frac{1}{3}\begin{pmatrix} 
  \+1\\ \+2\\ -3\\-4\\\+3
\end{pmatrix}
,
\frac{1}{2}\begin{pmatrix} 
  \+1\\ \+1 \\ -3\\-1\\ \+2
\end{pmatrix}
,
\frac{1}{5}\begin{pmatrix} 
  \+1\\ \+0 \\-2\\ \+0\\\+1
\end{pmatrix},
\end{align}
and corresponding left eigenvectors of $ A $ are:
\begin{align}\label{eigenbint}
\def\+{\phantom{-}}
\begin{pmatrix} 
  4\\ 3 \\ 2\\1\\ 0
\end{pmatrix}
,
\begin{pmatrix} 
  -1\\ -1 \\\+0\\ \+0\\\+1
\end{pmatrix},
\begin{pmatrix} 
  \+2\\ \+0\\ \+1\\-1\\\+0
\end{pmatrix},
\begin{pmatrix} 
  -4\\\+1\\-2\\\+1\\\+0
\end{pmatrix},
\begin{pmatrix} 
  \hskip3pt11\\-3\\\+3\\-1\\\+0
\end{pmatrix}.
\end{align}
Since the eigenvalues for the matrix $ A $ are distinct it follows
automatically that $ u_i\cdot v_j=0 $  for $ i\neq j $. Note that we have
scaled the eigenvectors so that $u_i\cdot v_i=1$ and (\ref{normalized}) 
hold.
Note also that $ u_1 $ is equal to the activity vector $ a$.
This is a consequence of the fact that the total activity always increases by 1
when we draw a ball from the urn, 
and thus $a\cdot \E\xi_i=1$ for each $i$,
see \cite[Lemma 5.4]{Janson}.

It is easy to see that we can apply Theorem \ref{simplepolyathm} for this
generalised \Polya{} urn. Note that it is obvious that the matrix $ A $ is
diagonalisable since all eigenvalues are simple. 
From Theorem \ref{simplepolyathm} we  obtain that $
X_n=(X_{n1},X_{n2},X_{n3},X_{n4},X_{n5}) $, where $ X_{ni} $ is the number
of balls  of type $ i $ (in our case the number of  trees that correspond to
type $ i $ in our forest),
has asymptotically a multivariate normal distribution. 
Let $ Y_n $ be equal to the number of protected nodes in the binary search
tree with $ n $ nodes. Since type 4 and type 5 each contains exactly one
protected node, while the other types contain no protected nodes, 
  $$Y_n=X_{n4}+X_{n5}.$$ Thus, Theorem \ref{simplepolyathm} implies that  
\begin{align}\label{normalY}
n^{-1/2} (Y_n-n\mu_Y)\stackrel{d}\rightarrow \N(0,\sigma^2_Y)
\end{align}
with parameters $ \mu_Y= \mu_4+\mu_5$ and 
\begin{align}\label{protectedbinvariance}\sigma^2_Y=\sigma_{4,4} +\sigma_{4,5}+\sigma_{5,4}+\sigma_{5,5}.
\end{align}
 Since $\lambda_1=1  $, Theorem
\ref{simplepolyathm} implies, using $v_1$ in \eqref{eigenbin},  that 
\begin{align}
\mu_Y=\mu_4+\mu_5=\frac{5}{30}+\frac{6}{30}=\frac{11}{30}. 
\end{align}
Thus, to show Theorem \ref{binary} it remains to calculate the sum in 
\eqref{protectedbinvariance}.

To calculate the matrix $ B $ in (\ref{B}) we need to calculate  $
B_i=\E(\xi_i\xi_i') $ in (\ref{Bi}). In all cases except for $ B_2 $ these are
deterministic and  equal to $ \xi_i\xi_i' $. We only show how to obtain $
B_2 $ (since the other cases are simpler).  
As shown in Figure \ref{add2} when adding a key to type 2 we can either add
it to the leaf or to the external node. In case we add it to the external
node (which happens with probability 1/3) a node of type 2 is replaced by a
node of type 1; this change corresponds to the column vector 
$(1, -1, 0, 0, 0)'  $. If the key is instead added to the leaf (which happens with probability 2/3) a node of type 2 is replaced by another node of type 2 (the change of type 2 is 0) and a node of type 4; this change corresponds to the column vector $ (0, 0, 0, 1, 0)' $. Hence 
\begin{align}\label{B2}
B_2
&=\tfrac13\cdot (1, -1, 0, 0, 0)' (1, -1, 0, 0, 0)
+\tfrac23\cdot (0, 0, 0, 1, 0)'(0, 0, 0, 1, 0)\nonumber
\\ & = \def\arraystretch{1.3}\left(
\begin{array}{rrrrr}
 \frac{1}{3} & -\frac{1}{3} & 0 & 0 & 0 \\
 -\frac{1}{3} & \frac{1}{3} & 0 & 0 & 0 \\
 0 & 0 & 0 & 0 & 0 \\
 0 & 0 & 0 & \frac{2}{3} & 0 \\
 0 & 0 & 0 & 0 & 0 \\
\end{array}
\right).
\end{align}
By calculating the $ B_i $'s we obtain the matrix $ B $ in (\ref{B}) as 
\begin{align}\label{Bbin}\def\+{\phantom{-}}
B=\def\arraystretch{1.3}\left(
\begin{array}{rrrrr}
 \+\frac{3}{10} & -\frac{3}{10} & -\frac{2}{15} & \+0 & \+0 \\
 -\frac{3}{10} & \+\frac{1}{2} & -\frac{1}{15} & \+0 & \+\frac{1}{5} \\
 -\frac{2}{15} & -\frac{1}{15} & \+\frac{1}{2} & -\frac{1}{6} & -\frac{1}{5} \\
 \+0 & \+0 & -\frac{1}{6} & \+\frac{1}{2} & \+0 \\
 \+0 & \+\frac{1}{5} & -\frac{1}{5} & \+0 & \+\frac{1}{5} \\
\end{array}
\right).
\end{align}

 From (\ref{simpleSigma}) in Theorem \ref{simplepolyathm} it follows 
that the  covariance matrix $ \Sigma $ for the asymptotic multivariate normal distribution of $X_n=(X_{n1},X_{n2},X_{n3},X_{n4},X_{n5}) $, is given by
\begin{align}\label{covbin}\def\+{\phantom{-}}
\Sigma=
\def\arraystretch{1.4}\left(
\begin{array}{rrrrr}
 \+\frac{43}{1575} & -\frac{67}{2520} & -\frac{113}{12600} & -\frac{29}{2520} & \+\frac{1}{1400} \\
 -\frac{67}{2520} & \+\frac{23}{420} & -\frac{1}{42} & -\frac{13}{1260} & \+\frac{71}{2520} \\
 -\frac{113}{12600} & -\frac{1}{42} & \+\frac{443}{6300} & -\frac{1}{30} & -\frac{59}{1800} \\
 -\frac{29}{2520} & -\frac{13}{1260} & -\frac{1}{30} & \+\frac{181}{1260} & -\frac{11}{504} \\
 \+\frac{1}{1400} & \+\frac{71}{2520} & -\frac{59}{1800} & -\frac{11}{504} & \+\frac{13}{450} \\
\end{array}
\right).
\end{align}
 Thus, it follows that 
\begin{align}\label{variancebin}\sigma^2_Y=\sigma_{4,4} +\sigma_{4,5}+\sigma_{5,4}+\sigma_{5,5}
=\frac{181}{1260} + \frac{13}{450} - 2\cdot \frac{11}{504}=\frac{29}{225}.
\end{align}
Thus, the proof of Theorem \ref{binary} is completed.
\qed

\section{Protected nodes in ternary search trees and \Polya{} urns}
\label{S3}

We now proceed by analyzing the number of protected nodes in ternary search trees, by using the \Polya{} urn in Section \ref{Polya} (described for general $ m $-ary search trees ) when $ m=3 $. The 19 different types we get are shown in Figure \ref{ternarytypes} (with a numbering that will be used below). From our characterization of the types in Section \ref{Polya}, for example type 2 corresponds to (0,1,2).
Note that type 17, type 18 and type 19 contain one protected node each, while the other types contain no protected nodes.

\begin{figure}[t]

\includegraphics[scale=0.12]{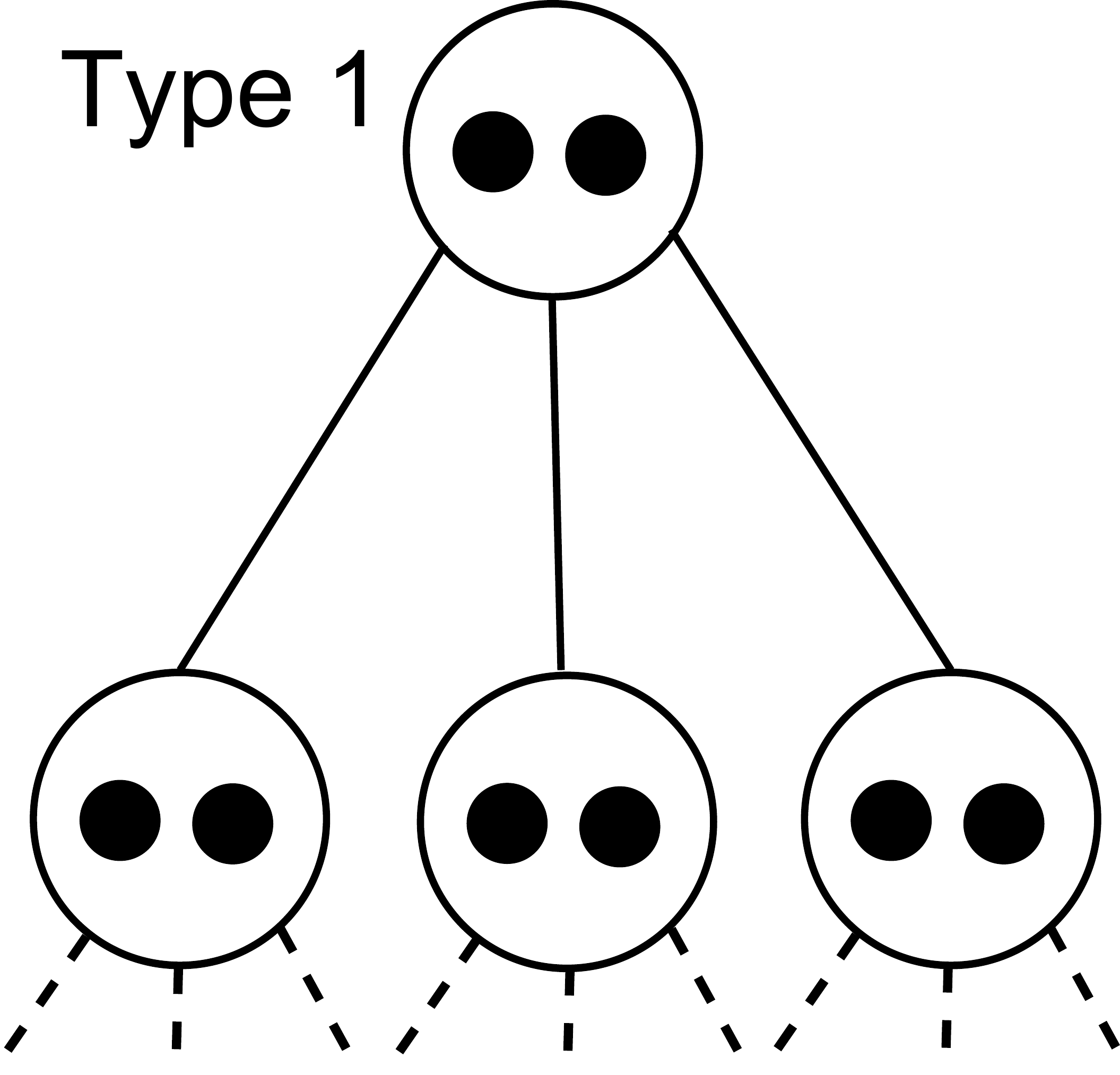}
\includegraphics[scale=0.12]{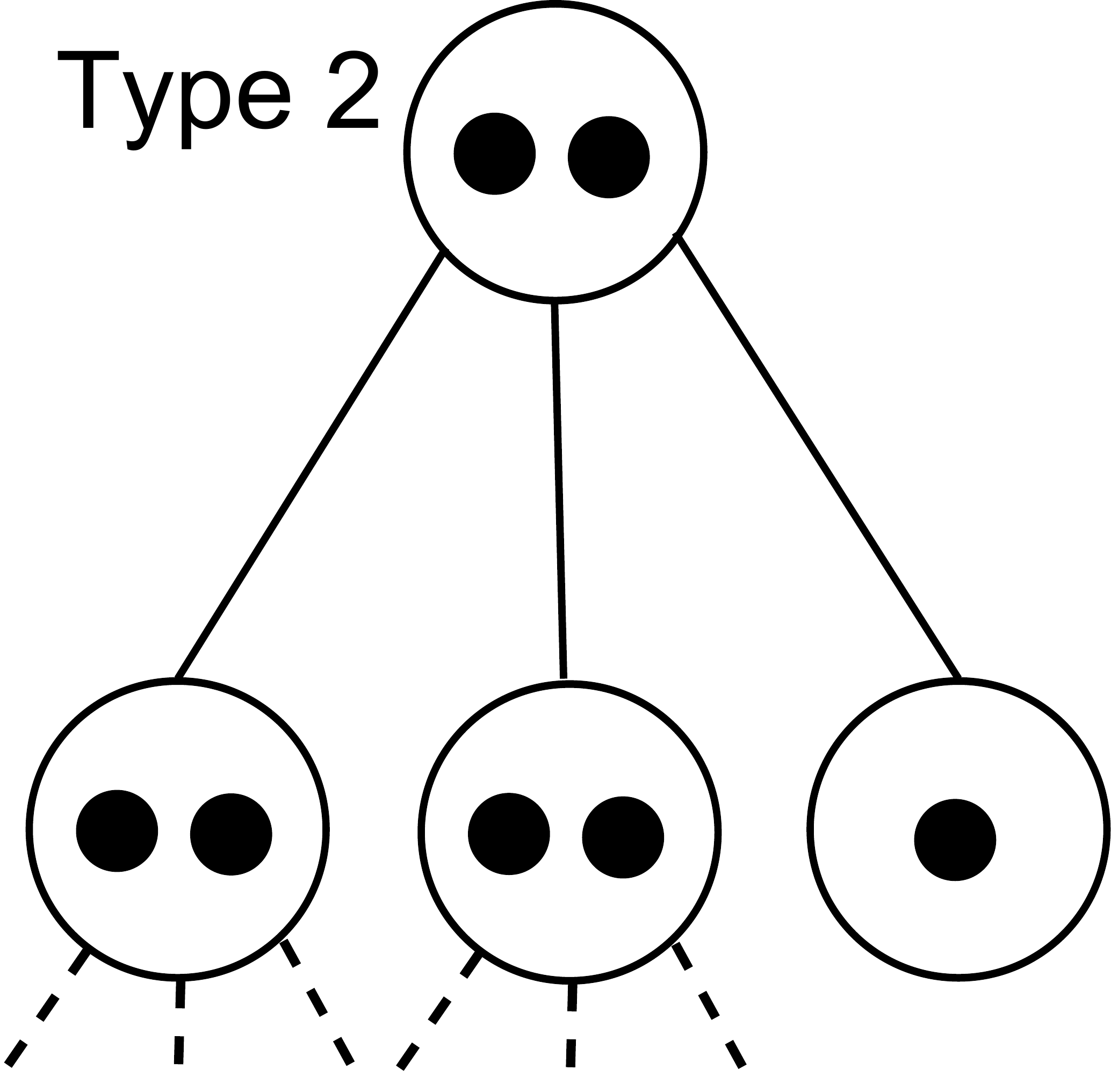}
\includegraphics[scale=0.12]{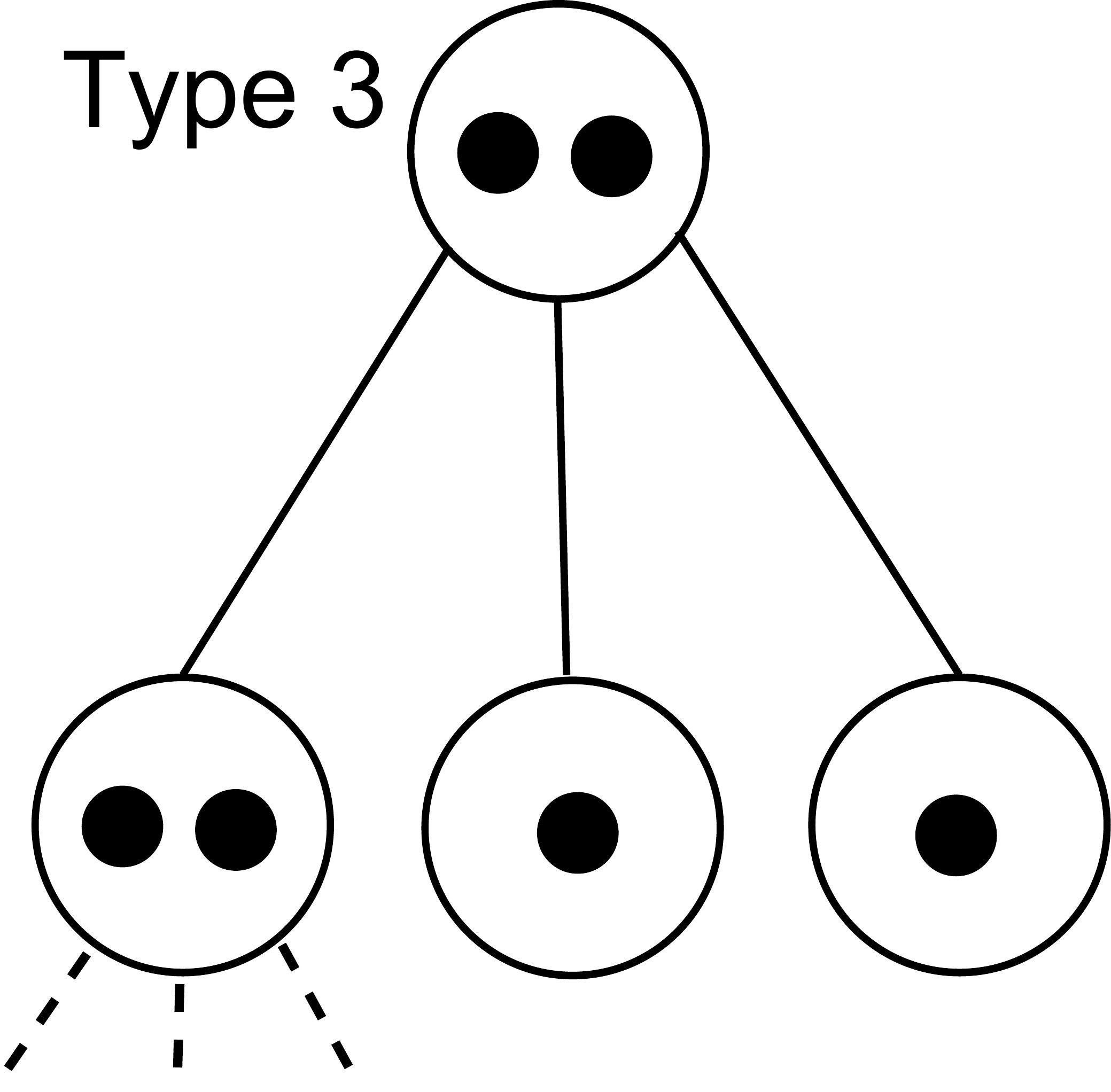}
\includegraphics[scale=0.12]{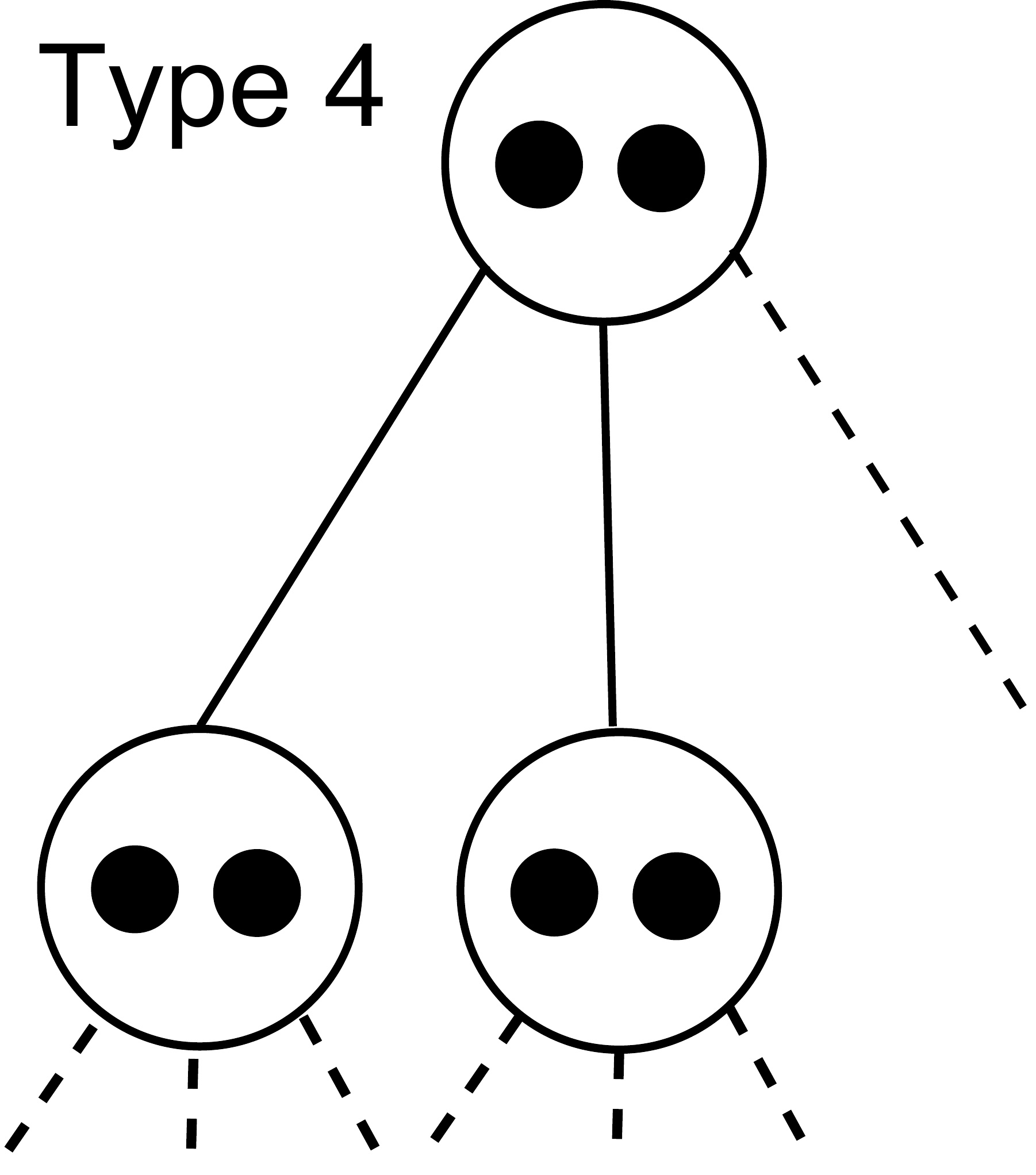}
\raisebox{7pt}{
\includegraphics[scale=0.12]{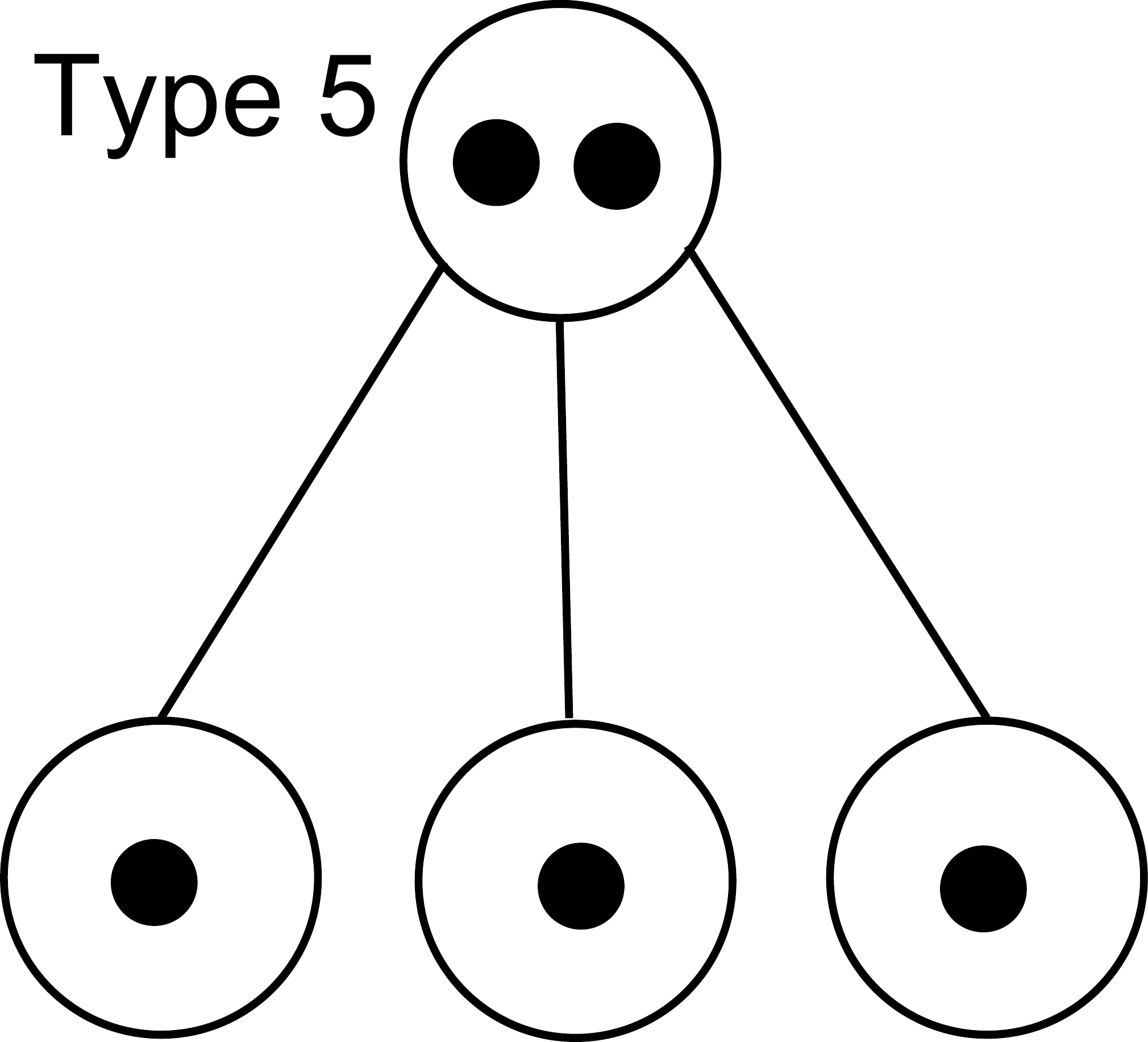}}
\includegraphics[scale=0.12]{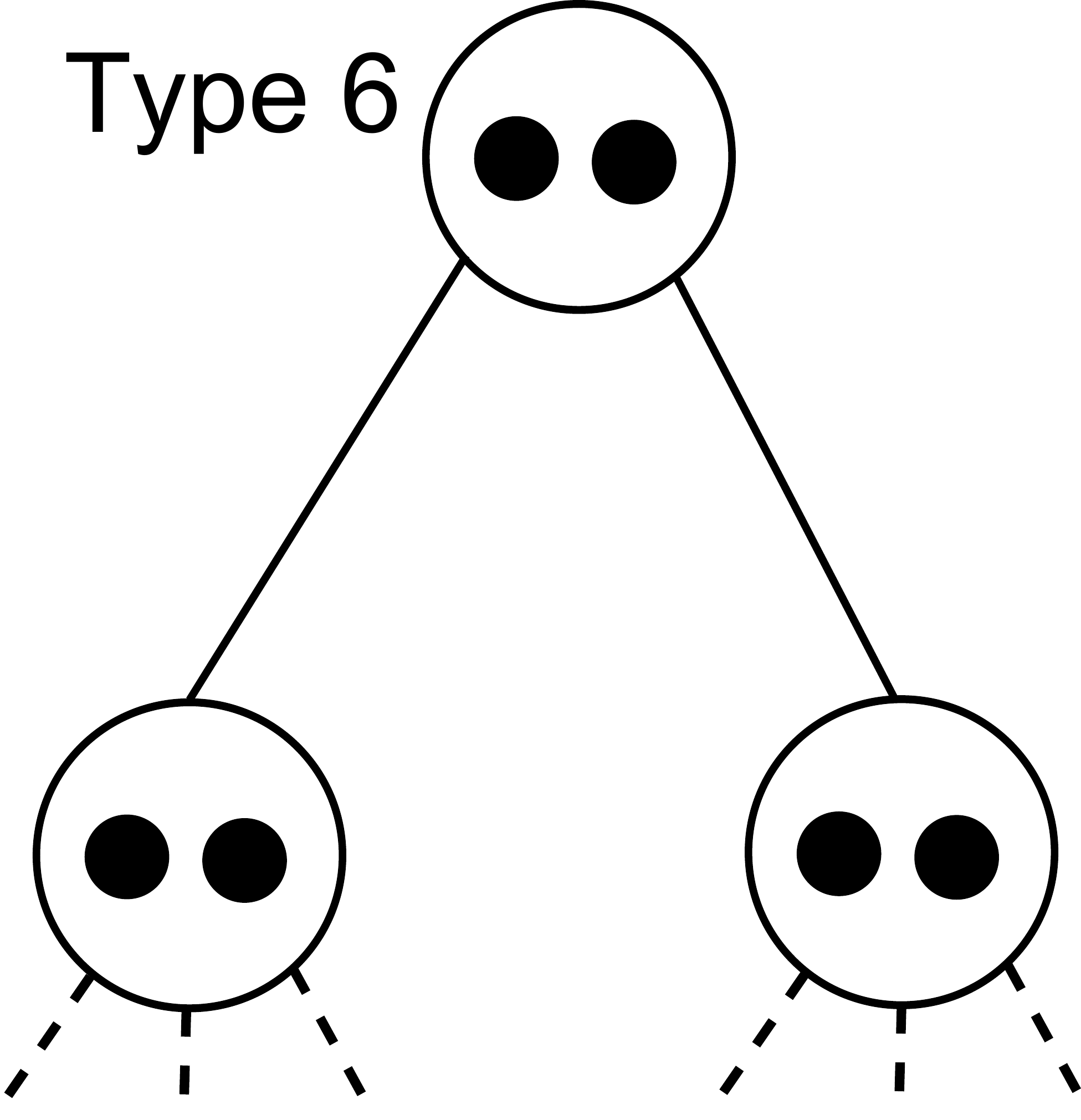}
\includegraphics[scale=0.12]{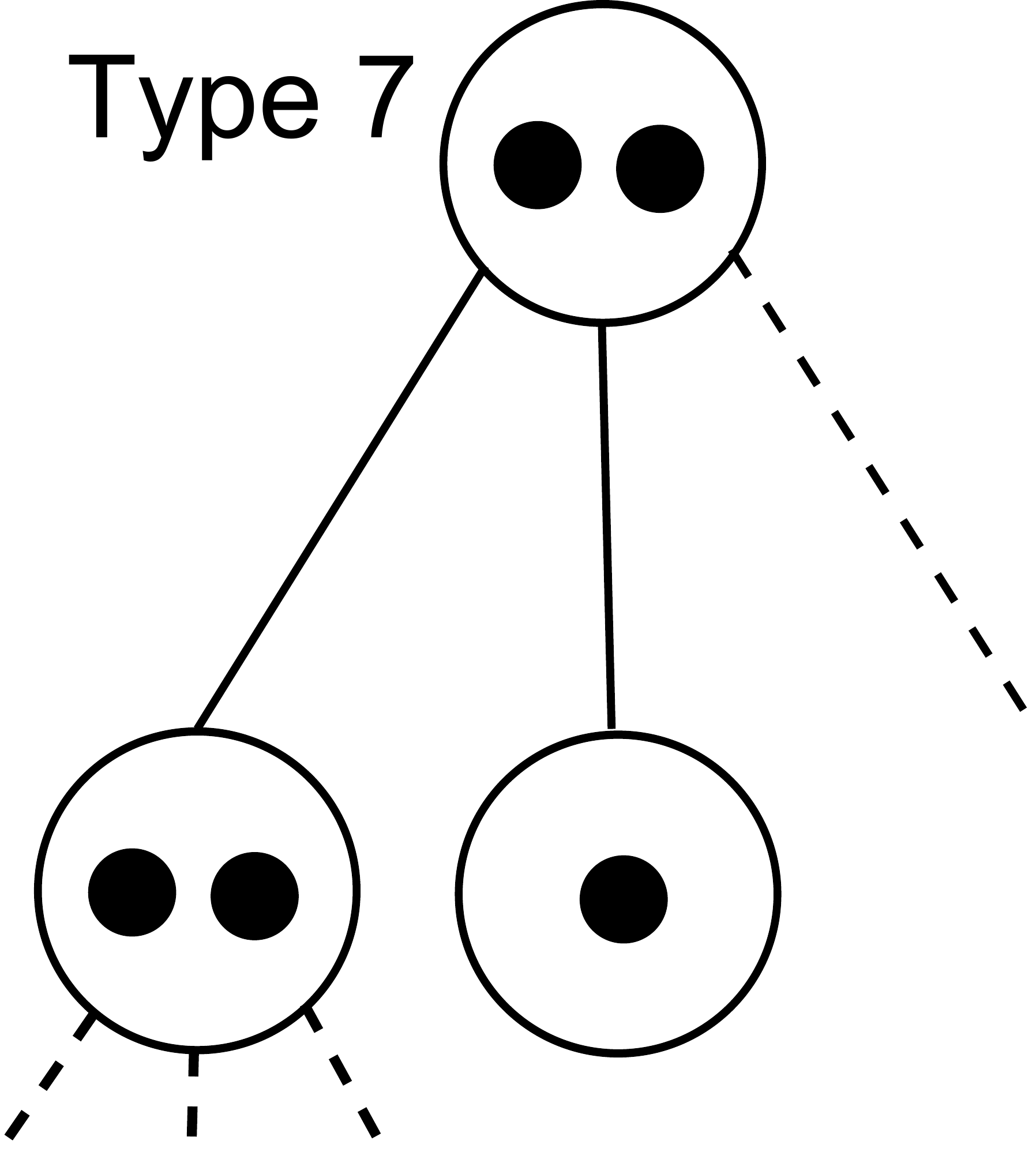}
\includegraphics[scale=0.12]{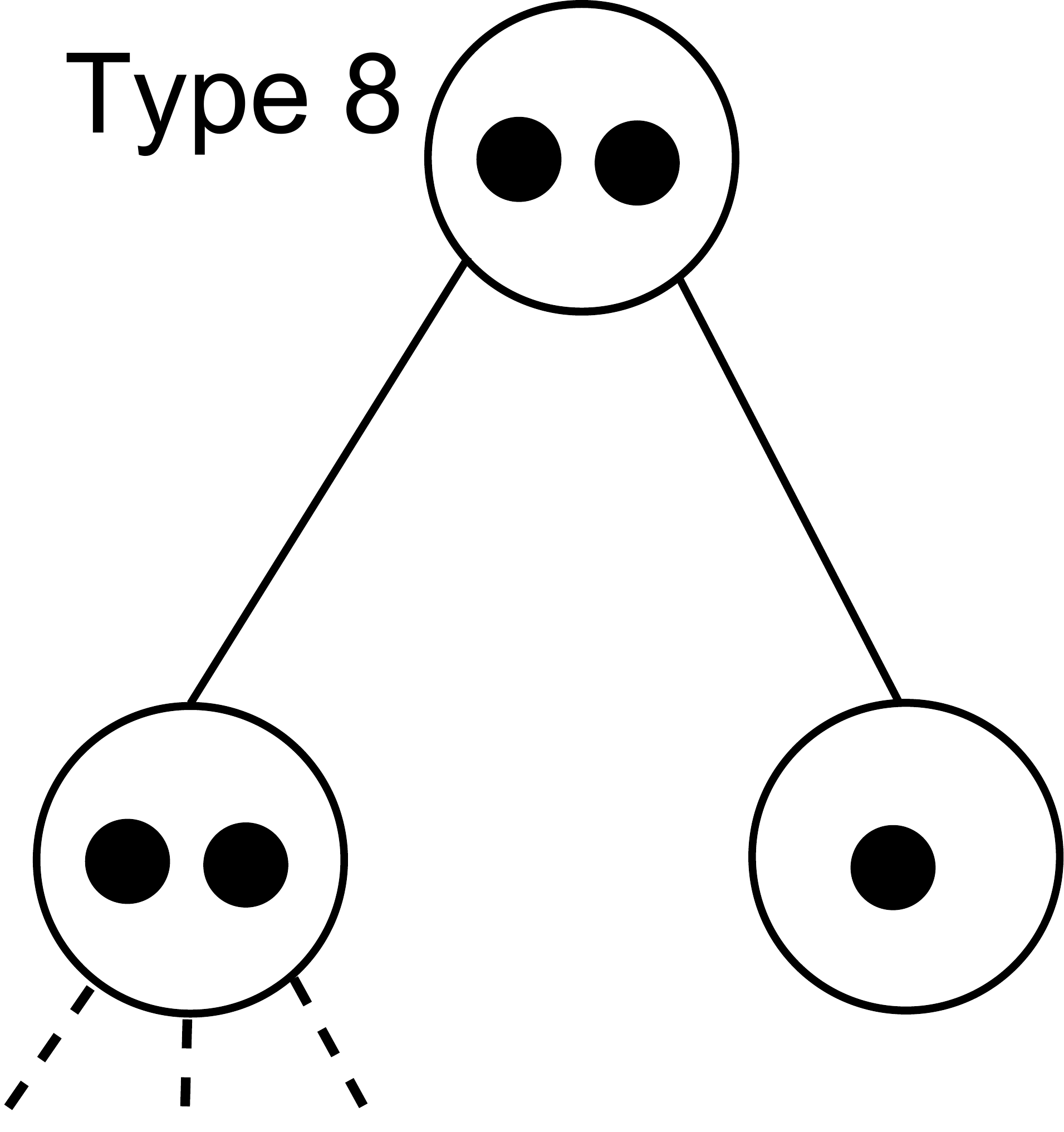}
\raisebox{7pt}{
\includegraphics[scale=0.12]{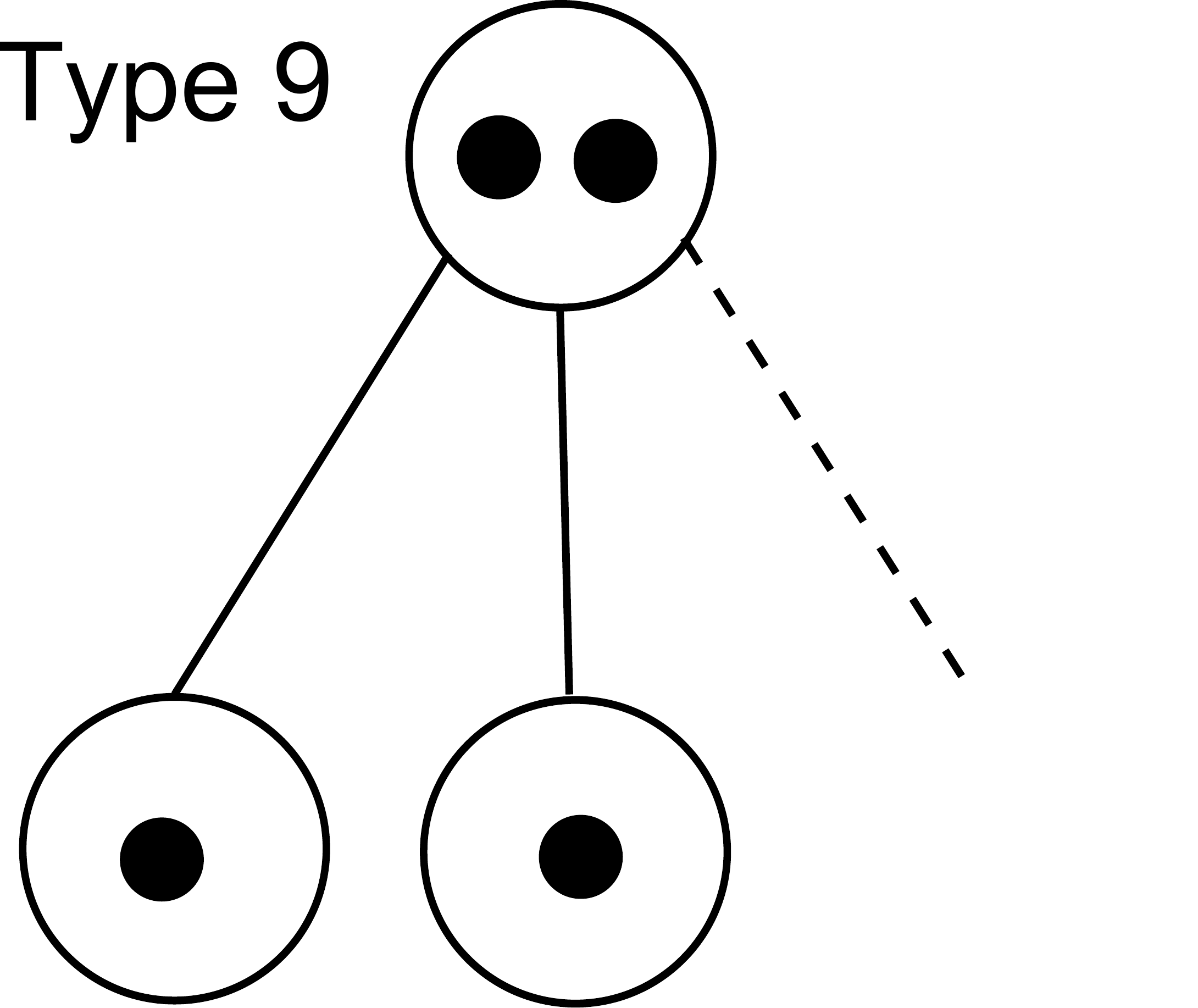}}
\includegraphics[scale=0.12]{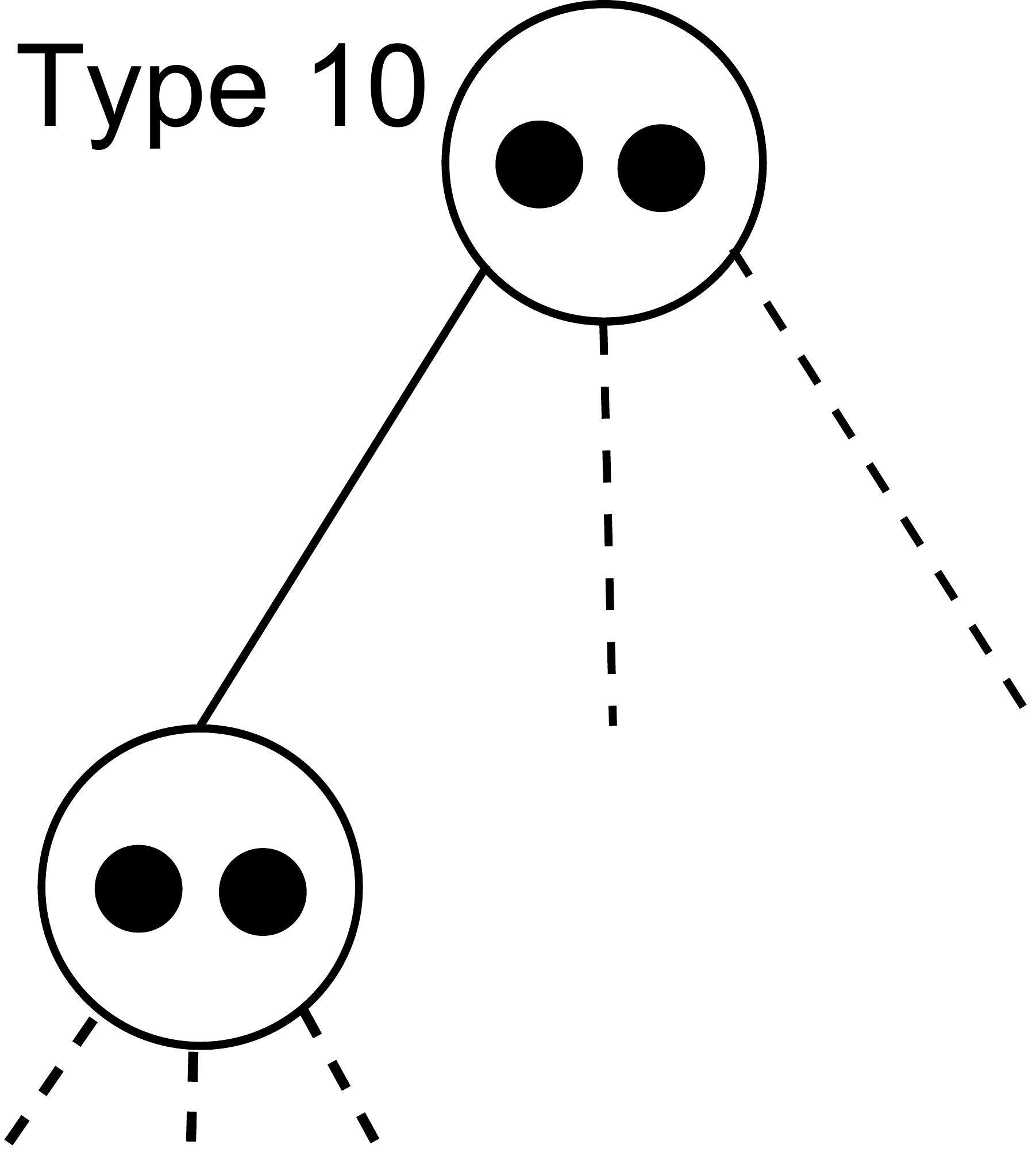}
\raisebox{7pt}{\includegraphics[scale=0.12]{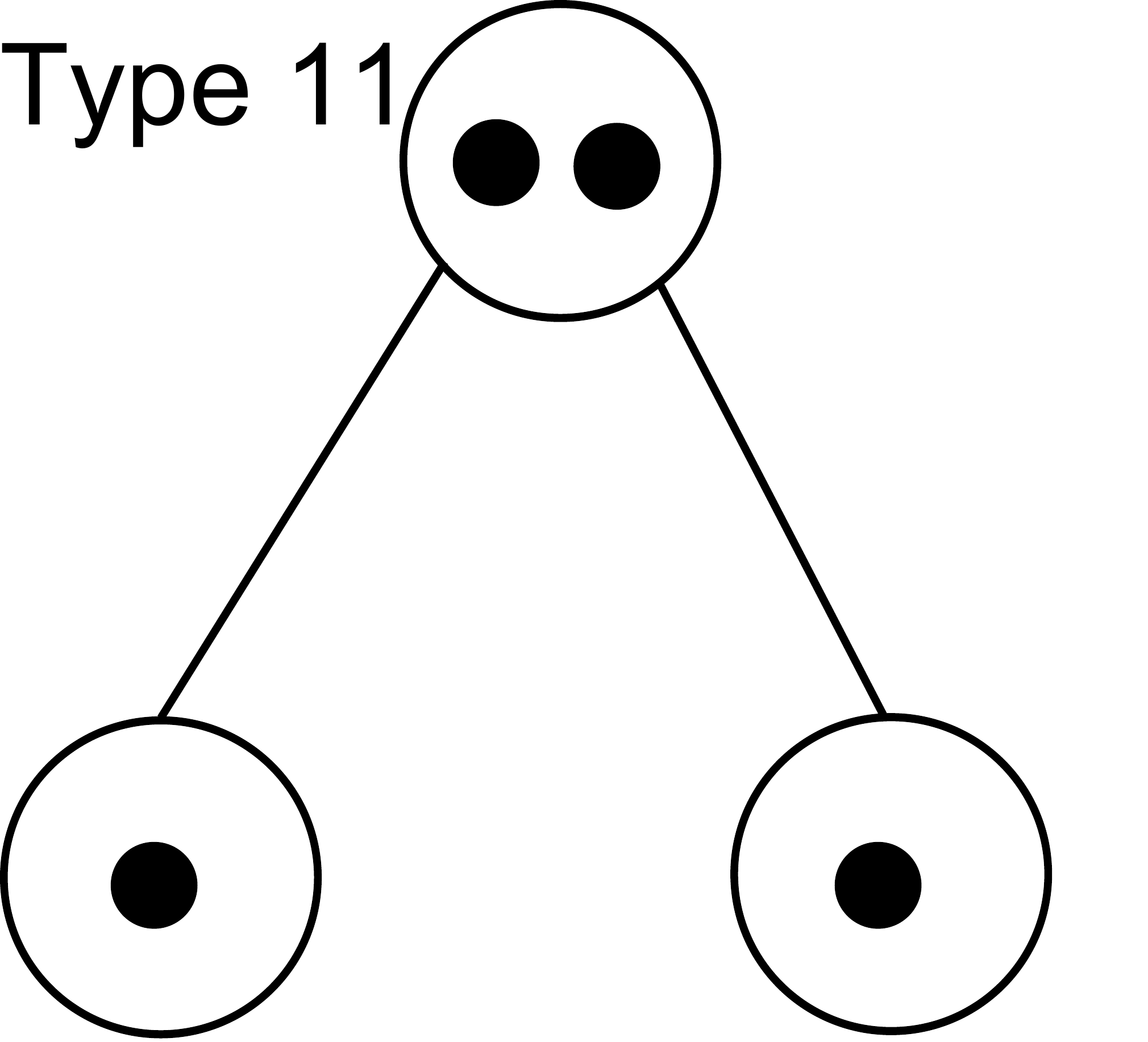}}
\includegraphics[scale=0.12]{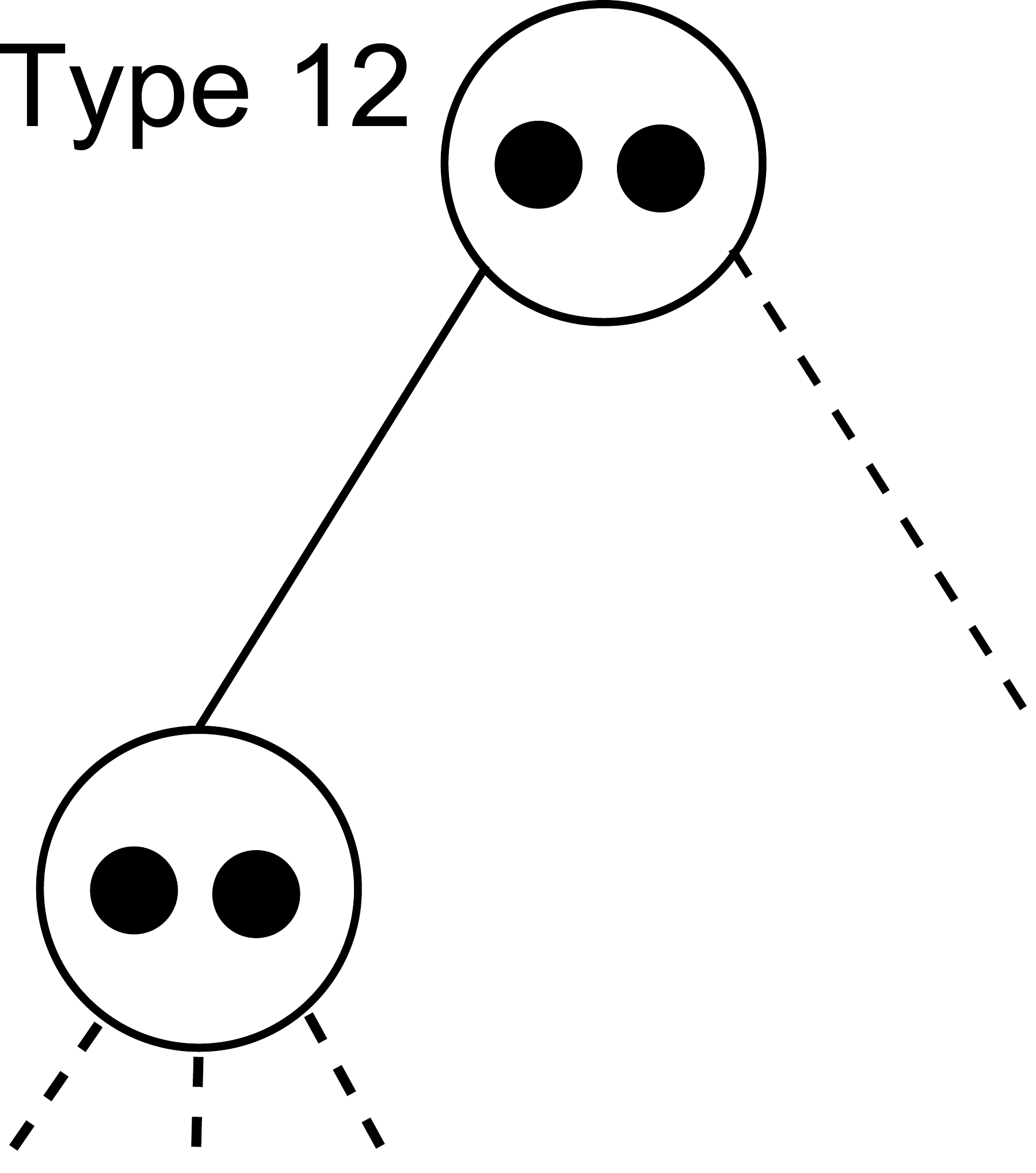}
\raisebox{7pt}{\includegraphics[scale=0.12]{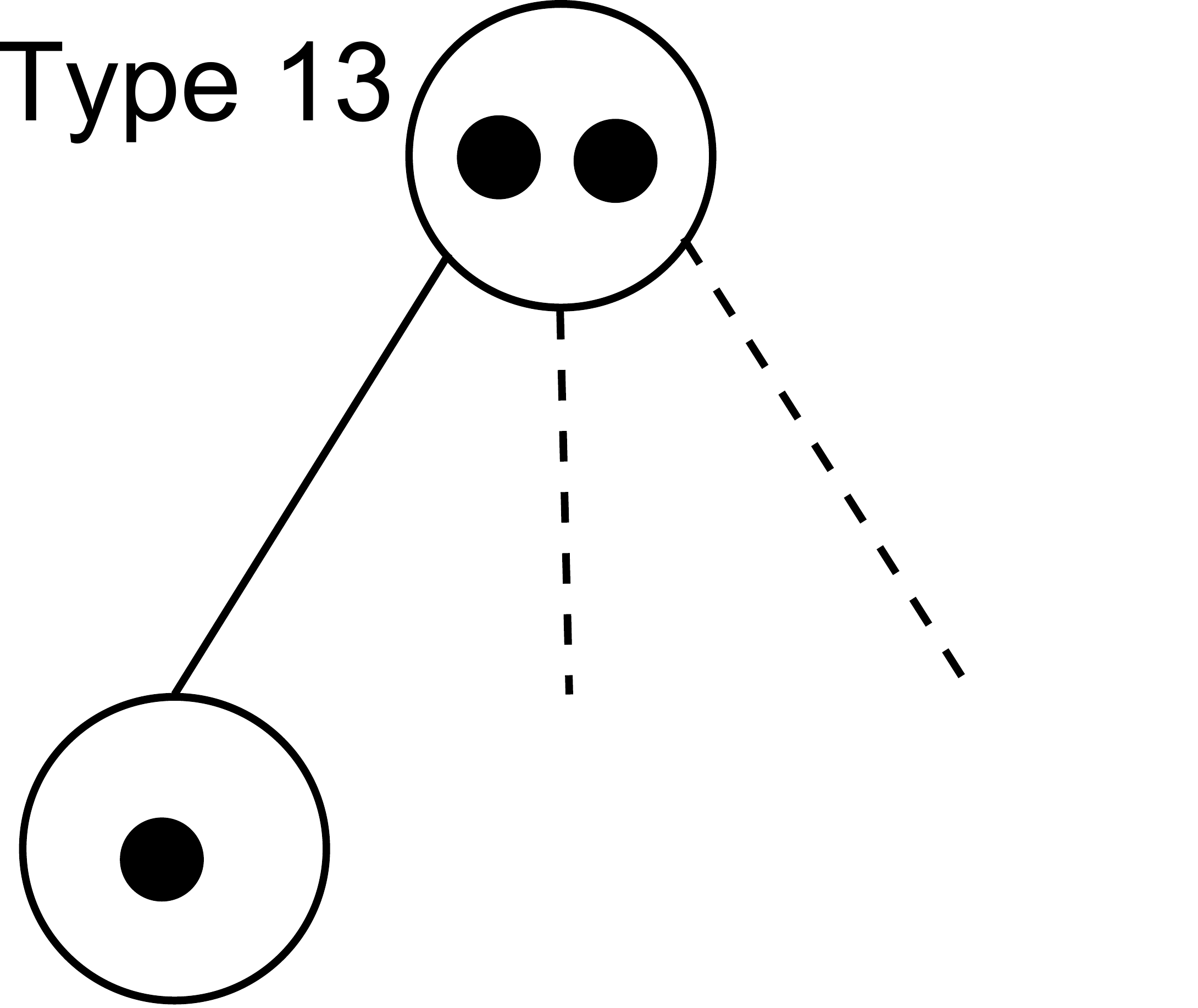}}
\includegraphics[scale=0.12]{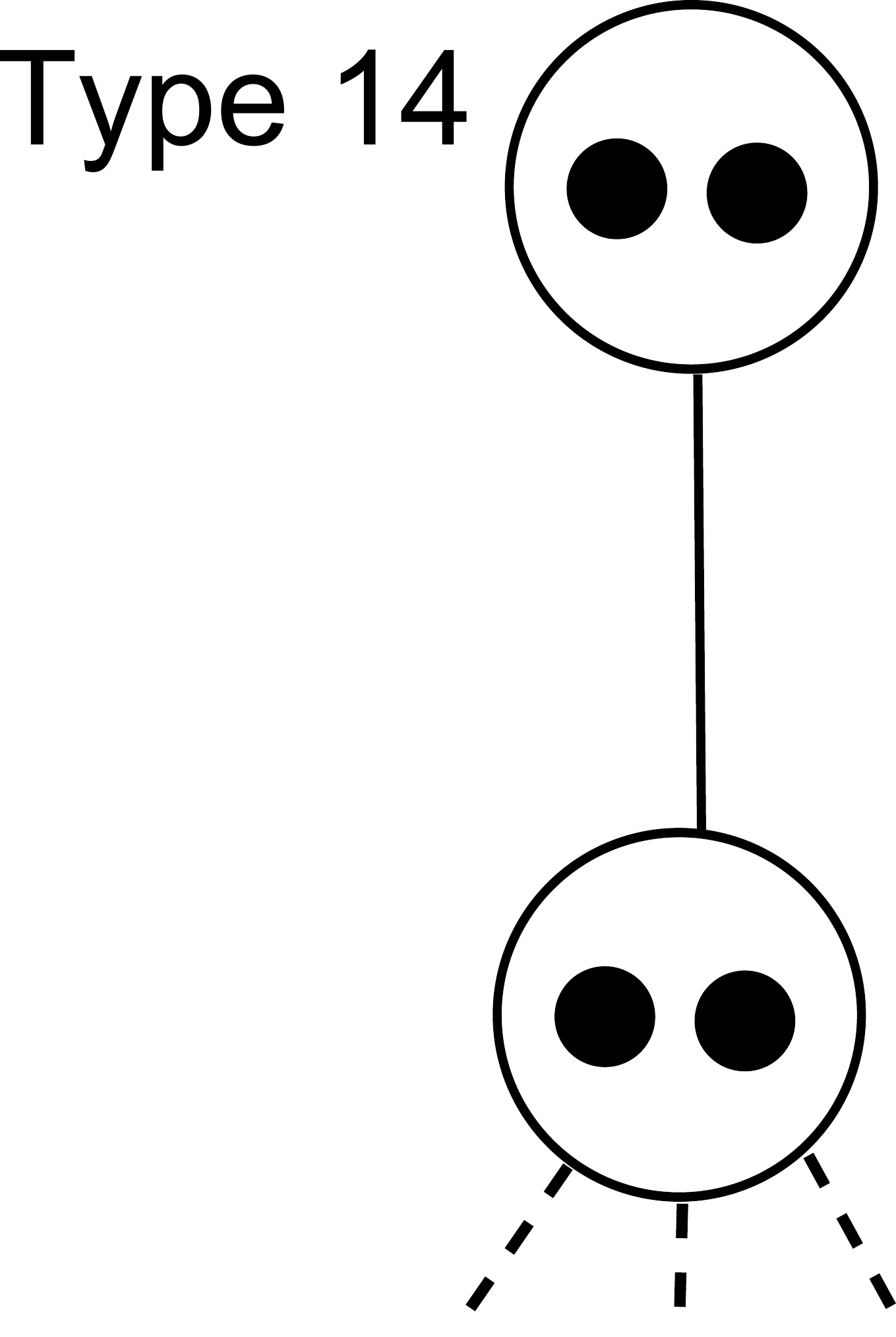}
\raisebox{7pt}{\includegraphics[scale=0.12]{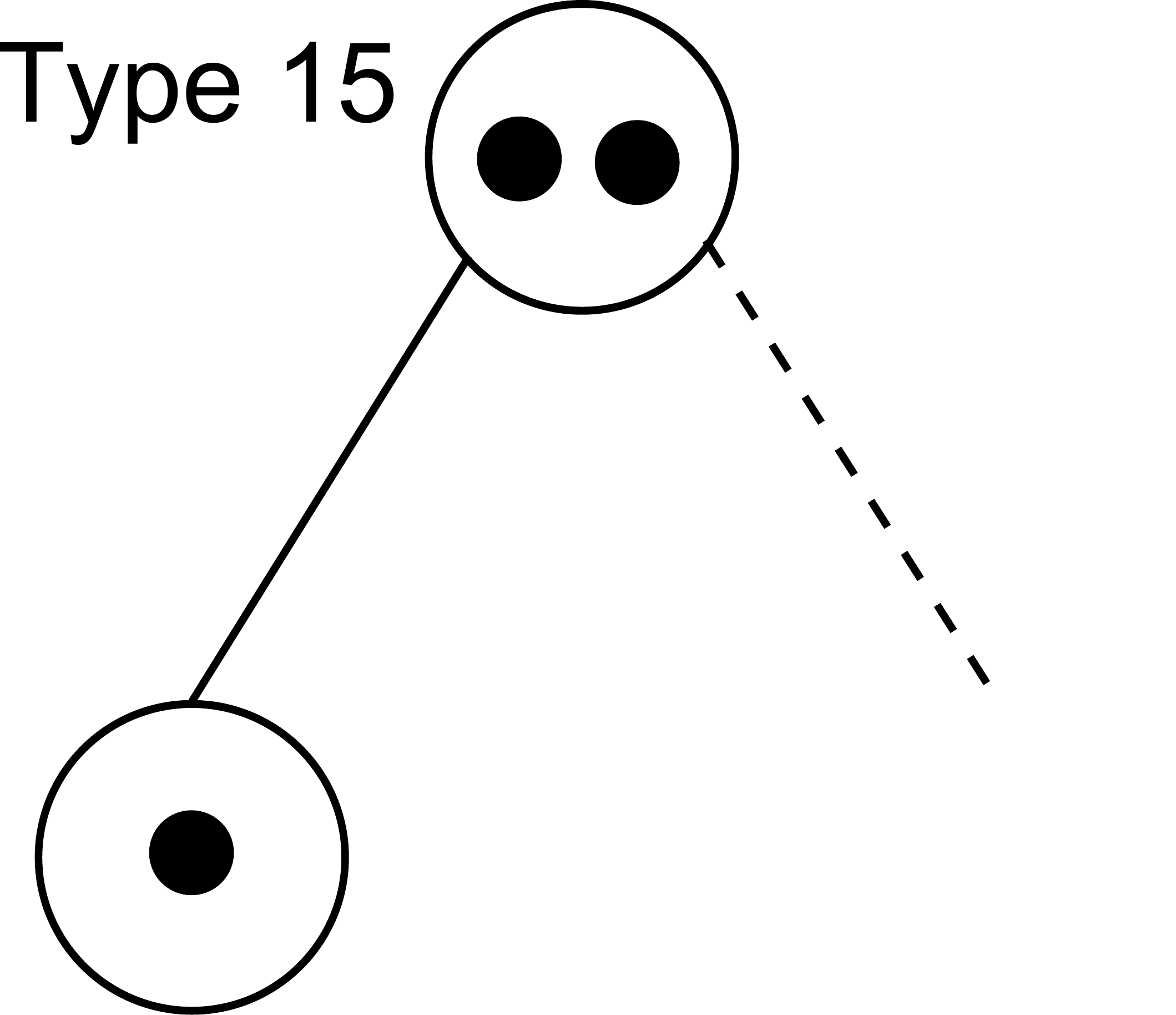}}
\raisebox{7pt}{\includegraphics[scale=0.12]{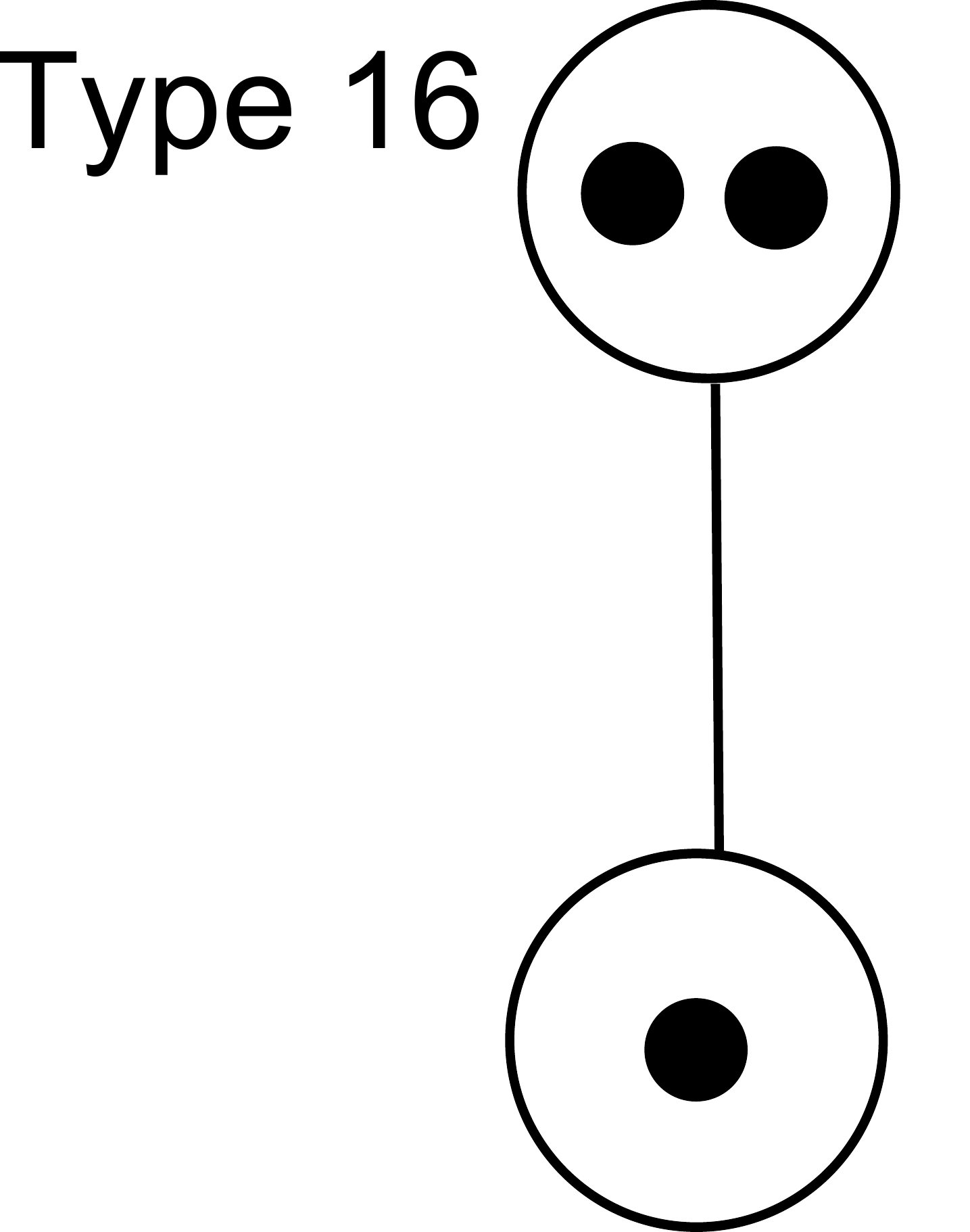}}
\includegraphics[scale=0.12]{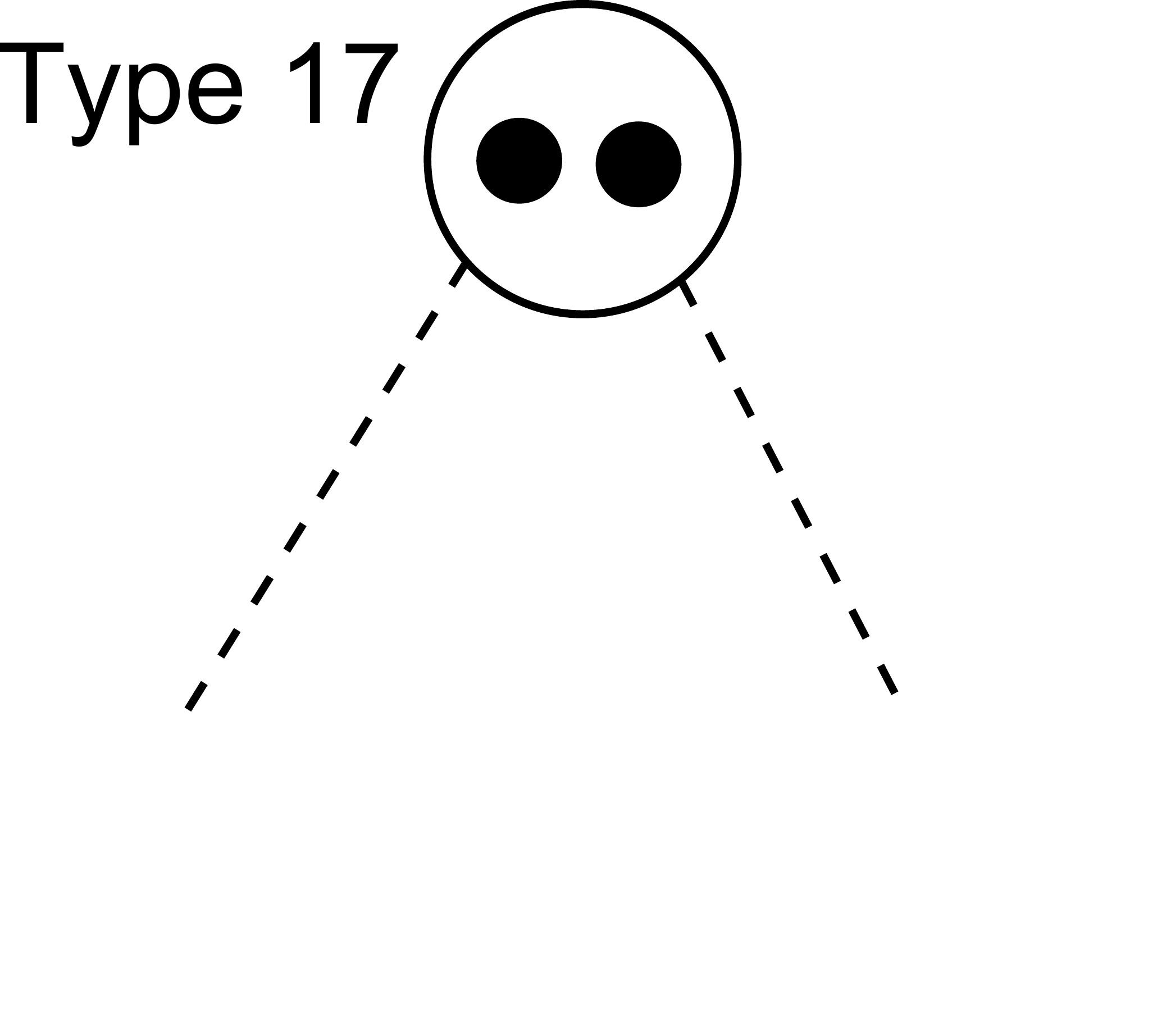}
\includegraphics[scale=0.12]{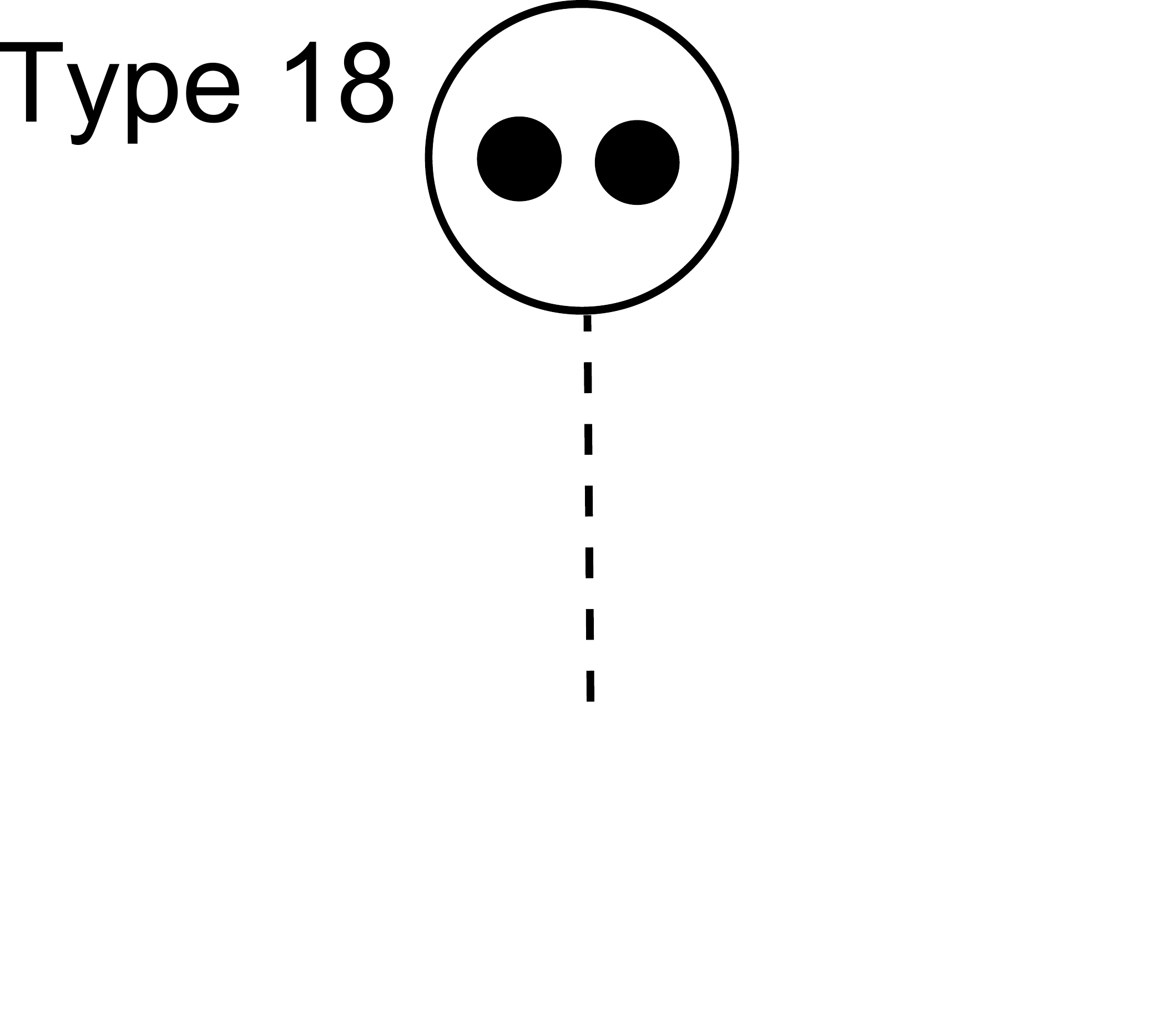}
\includegraphics[scale=0.12]{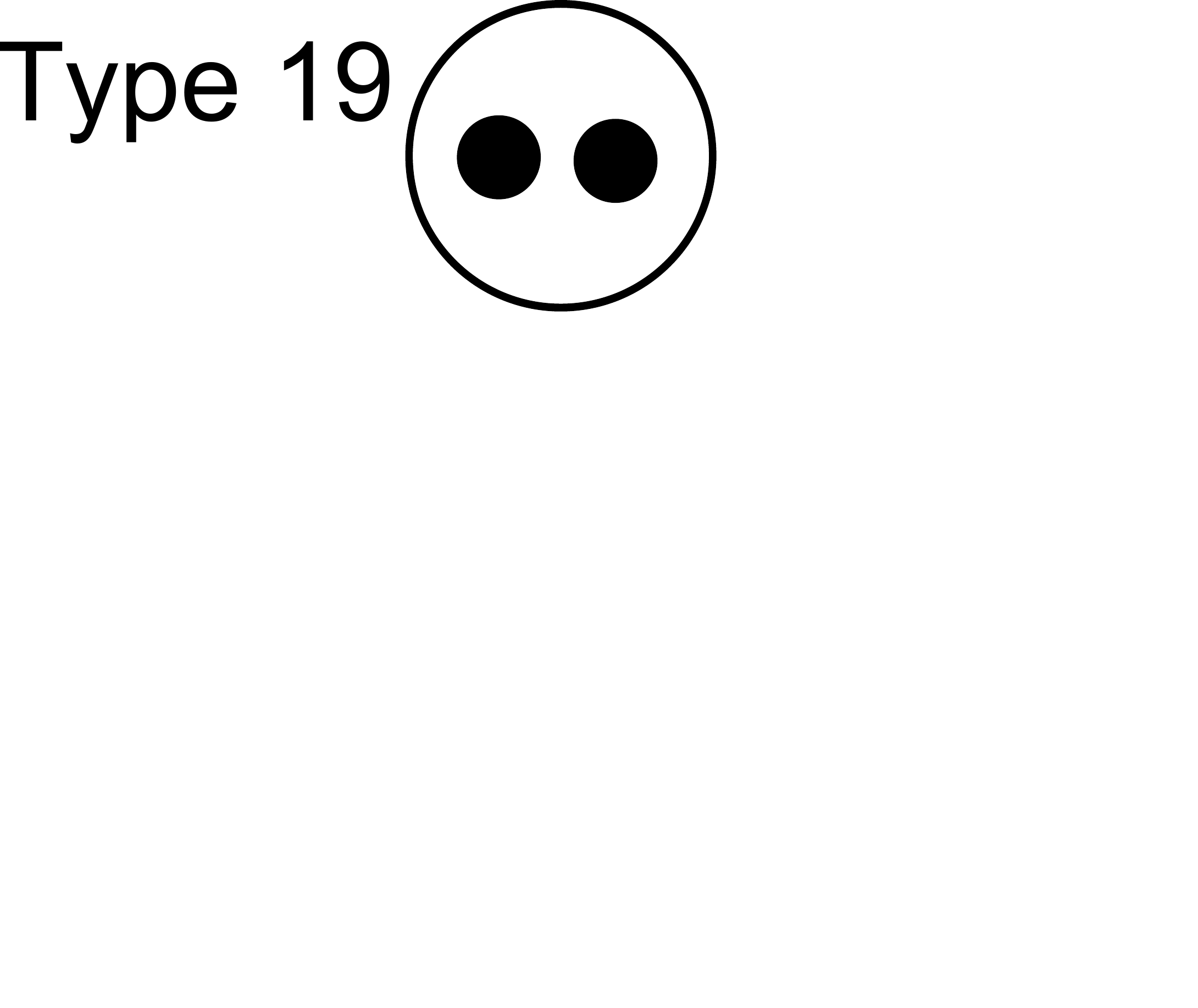}

\caption{The different types characterizing protected and unprotected nodes in ternary search trees. Type 17, type 18 and type 19 are the only ones that include protected nodes.  }\label{ternarytypes}
\end{figure}

To determine the matrix $ A $ we proceed (as for the binary search tree) to find the transitions when a ball (in our case one of the 19 trees in our forest) of type $ i $ is chosen. Figure  \ref{add2ternary} illustrates the different situations for how a new key could be added to a ball (a tree) of type 2. All the other cases are similar, and we leave these cases as an exercise to the reader.
\begin{figure}[h]
\begin{center}
\includegraphics[scale=0.12]{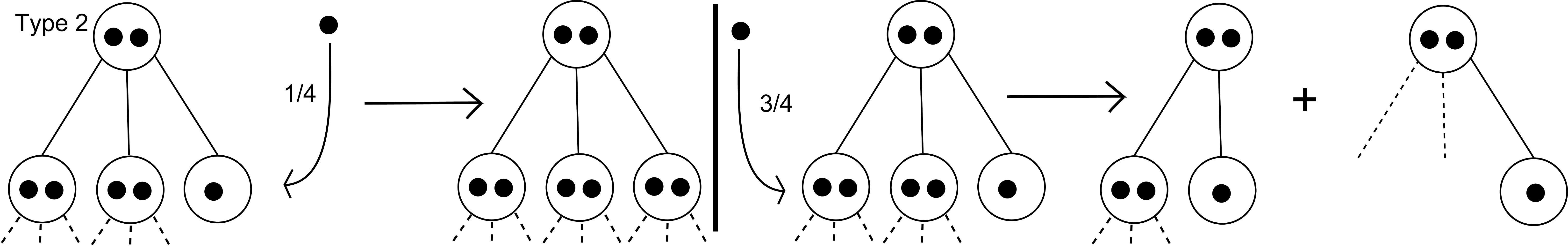}
\end{center}
\caption{The two possibilities for adding a key to a node in a tree of type 2 of a ternary search tree. }\label{add2ternary}
\end{figure}
From the different transitions for changing a node of type $ i $ we get the
matrix $ A $ for ternary search trees in Figure \ref{Atrinar}. The example 
in Figure \ref{add2ternary} gives the second column  of $A$. 
The tree of type 2 has activity 8. If it is drawn, and the
new key is added to the node with only one key which happens with
probability $ \frac{2}{8} $, then a tree of type 2 is replaced with a tree
of type 1. If the new key is instead added to one of the nodes containing
two keys which happens with probability $ \frac{6}{8} $, then the tree of
type 2 is replaced by a tree of type 8 and one tree of type 13. Thus, the
second column of the matrix $ A $ for the ternary search tree is given by  
$$ 8\cdot
(\tfrac{2}{8},-1,0,0,0,0,0,\tfrac{6}{8},0,0,0,0,\tfrac{6}{8},0,0,0,0,0,0)'. 
$$
In this way we obtain $A$ in Figure \ref{Atrinar}.
\begin{figure}
\def\+{\phantom{-}}
$ A=
\left(\arraycolsep=3pt 
\begin{array}{ccccccccccccccccccc}
 -9 & \+2 & \+0 & \+0 & \+0 & \+0 & \+0 & \+0 & \+0 & \+0 & \+0 & \+0 & \+0 & \+0 & \+0 & \+0 & \+0 & \+0 & \+0 \\
 \+0 & -8 & \+4 & \+1 & \+0 & \+0 & \+0 & \+0 & \+0 & \+0 & \+0 & \+0 & \+0 & \+0 & \+0 & \+0 & \+0 & \+0 & \+0 \\
 \+0 & \+0 & -7 & \+0 & \+6 & \+0 & \+1 & \+0 & \+0 & \+0 & \+0 & \+0 & \+0 & \+0 & \+0 & \+0 & \+0 & \+0 & \+0 \\
 \+0 & \+0 & \+0 & -7 & \+0 & \+0 & \+2 & \+0 & \+0 & \+0 & \+0 & \+0 & \+0 & \+0 & \+0 & \+0 & \+0 & \+0 & \+0 \\
 \+0 & \+0 & \+0 & \+0 & -6 & \+0 & \+0 & \+0 & \+1 & \+0 & \+0 & \+0 & \+0 & \+0 & \+0 & \+0 & \+0 & \+0 & \+0 \\
 \+9 & \+0 & \+0 & \+0 & \+0 & -6 & \+0 & \+2 & \+0 & \+0 & \+0 & \+0 & \+0 & \+0 & \+0 & \+0 & \+0 & \+0 & \+0 \\
 \+0 & \+0 & \+0 & \+0 & \+0 & \+0 & -6 & \+0 & \+4 & \+2 & \+0 & \+0 & \+0 & \+0 & \+0 & \+0 & \+0 & \+0 & \+0 \\
 \+0 & \+6 & \+0 & \+0 & \+0 & \+0 & \+0 & -5 & \+0 & \+0 & \+4 & \+1 & \+0 & \+0 & \+0 & \+0 & \+0 & \+0 & \+0 \\
 \+0 & \+0 & \+0 & \+0 & \+0 & \+0 & \+0 & \+0 & -5 & \+0 & \+0 & \+0 & \+2 & \+0 & \+0 & \+0 & \+0 & \+0 & \+0 \\
 \+0 & \+0 & \+0 & \+0 & \+0 & \+0 & \+0 & \+0 & \+0 & -5 & \+0 & \+0 & \+2 & \+0 & \+0 & \+0 & \+0 & \+0 & \+0 \\
 \+0 & \+0 & \+3 & \+0 & \+0 & \+0 & \+0 & \+0 & \+0 & \+0 & -4 & \+0 & \+0 & \+0 & \+1 & \+0 & \+0 & \+0 & \+0 \\
 \+0 & \+0 & \+0 & \+6 & \+0 & \+0 & \+0 & \+0 & \+0 & \+0 & \+0 & -4 & \+0 & \+0 & \+2 & \+0 & \+0 & \+0 & \+0 \\
 \+9 & \+6 & \+3 & \+6 & \+0 & \+6 & \+3 & \+3 & \+0 & \+3 & \+0 & \+3 & -4 & \+3 & \+0 & \+0 & \+0 & \+0 & \+0 \\
 \+0 & \+0 & \+0 & \+0 & \+0 & \+6 & \+0 & \+0 & \+0 & \+0 & \+0 & \+0 & \+0 & -3 & \+0 & \+2 & \+0 & \+0 & \+0 \\
 \+0 & \+0 & \+0 & \+0 & \+0 & \+0 & \+3 & \+0 & \+0 & \+0 & \+0 & \+0 & \+0 & \+0 & -3 & \+0 & \+2 & \+0 & \+0 \\
 \+0 & \+0 & \+0 & \+0 & \+0 & \+0 & \+0 & \+3 & \+0 & \+0 & \+0 & \+0 & \+0 & \+0 & \+0 & -2 & \+0 & \+1 & \+0 \\
 \+0 & \+0 & \+0 & \+0 & \+0 & \+0 & \+0 & \+0 & \+0 & \+3 & \+0 & \+0 & \+0 & \+0 & \+0 & \+0 & -2 & \+0 & \+0 \\
 \+0 & \+0 & \+0 & \+0 & \+0 & \+0 & \+0 & \+0 & \+0 & \+0 & \+0 & \+3 & \+0 & \+0 & \+0 & \+0 & \+0 & -1 & \+0 \\
 \+0 & \+0 & \+0 & \+0 & \+0 & \+0 & \+0 & \+0 & \+0 & \+0 & \+0 & \+0 & \+0 & \+3 & \+0 & \+0 & \+0 & \+0 & \+0 \\
\end{array}
\right) $
\caption{The transition matrix $ A $ for the \Polya{} urn defined in \refS{Polya} in the case of the ternary search tree.}
\label{Atrinar}
\end{figure}

The activities of the different types are given by the vector 
$$a=(9, 8, 7, 7, 6, 6, 6, 5, 5, 5, 4, 4, 4, 3, 3, 2, 2,
1, 0)'.  $$ 
These correspond to the number of gaps for the different types.
The eigenvalues of the matrix $ A $ 
are 
$$1, 0,-2,-3,-3,-4, -4, -4, -4, -5, -5, -5,-6, -6, -6,-7, -7,-8, -9.$$
The eigenspace belonging to the eigenvalue $ -4 $ (which has algebraic
multiplicity 4) has dimension 3. 
Since the dimension of the eigenspace belonging to the eigenvalue $-4$ is not
equal to the algebraic multiplicity, the matrix $A$ is not
diagonalisable. (However, all other eigenspaces have full dimension.) 
 Hence, we can not apply Theorem \ref{simplepolyathm}. 
However, Theorem \ref{polyathm} can be applied since  $ a\cdot\E(\xi_i)=1 $ for
each $i$ (this
follows since we always add exactly one key when a tree of type $ i $ is
chosen). 

From Theorem \ref{polyathm} we  obtain that the vector $
X_n=(X_{n1},\dots,X_{n19}) $, where $ X_{ni} $ are the number of balls  of
type $ i $ (in our case the number of  trees that correspond to type $ i $
in our forest obtained from the ternary search tree), has asymptotically a
multivariate normal distribution. 
Let $ Z_n $ be  the number of protected nodes in the ternary search tree
with $ n $ nodes. Since type 17, type 18 and type 19 each contains exactly
one protected node, while the other types contain no protected nodes, 
\begin{equation}\label{ZX3}
Z_n=X_{n17}+X_{n18}+X_{n19}.  
\end{equation}
Thus, Theorem \ref{polyathm} implies that 
\begin{align}\label{normalZ}
n^{-1/2} (Z_n-n\mu_Z)\stackrel{d}\rightarrow \N(0,\sigma^2_Z),
\end{align}
with parameters $$\mu_Z= \mu_{17}+\mu_{18}+\mu_{19}
$$ 
and, writing $\Sigma=( \sigma_{i,j})_{i,j=1}^{19}$, 
\begin{align}\label{protectedtrinvariance}
\sigma^2_Z=\sum_{i=17}^{19}\sum_{j=17}^{19}\sigma_{i,j}.
\end{align}
 
Using the normalization in (\ref{normalized}), we see that 
\begin{align}\label{v1trin} v_1=\frac{1}{2100}\cdot 
(1, 5, 9, 9, 6, 7, 36, 20, 42, 42, 15, 30, 126, 28, 48, 35, 42, 45, 84)'
\end{align}
and that
$$u_1=(9, 8, 7, 7, 6, 6, 6, 5, 5, 5, 4, 4, 4, 3, 3, 2, 2,1, 0)'.$$
(As in the binary case, $ u_1=a$ 
since $a\cdot \E\xi_i=1$ for each $i$,
see \cite[Lemma 5.4]{Janson}.) 
Since $\lambda_1=1  $, Theorem \ref{polyathm} and \eqref{v1trin} yield
\begin{align}
\mu_Z=\mu_{17}+\mu_{18}+\mu_{19}=\frac{42}{2100}+\frac{45}{2100}+\frac{84}{2100}=\frac{57}{700}.
\end{align}
Thus, to show Theorem \ref{main} it remains to calculate the sum in (\ref{protectedtrinvariance}).

Since we want to determine the matrix $ \Sigma_I $ in (\ref{Sigma}) we need to determine the matrices $ P_I $ and $ B $.
We have $ P_I=I_{19}-v_1u_1' $, which is a $ 19\times 19 $ matrix that is
shown in (\ref{projtrinar}) in the appendix.
To calculate the matrix $ B $ in (\ref{B}) we need to calculate  
$ B_i=\E(\xi_i\xi_i') $ in (\ref{Bi}). We only describe how to get 
$B_2 $  since the other cases are analogous. From Figure \ref{add2ternary}
(and the explanation of that figure above) it is easy to see that
\begin{gather*}
B_2=\tfrac{1}4\cdot b_1b_1'+\tfrac{3}4\cdot b_2b_2',~~\text{where}, 
\\
\begin{aligned}
   b_1&=(1, -1, 0, 0, 0, 0, 0, 0, 0, 0, 0, 0, 0, 0, 0, 0, 0, 0, 0)'
~~ \text{and}
\\
b_2&=(0, -1, 0, 0, 0, 0, 0, 1, 0, 0, 0, 0, 1, 0, 0, 0, 0, 0, 0)'. 
\end{aligned}
\end{gather*}
Note that $ B_2 $ is a $ 19\times 19 $ matrix.
The matrix $ B $ is shown in (\ref{Bternary}) in the appendix.
Now we can use Mathematica to evaluate the integral in (\ref{Sigma}), which
yields $\Sigma_1$. Finally, $\Sigma=\Sigma_1$ by 
Theorem \ref{polyathm} with $m=1$.
This matrix is given last in the appendix.

By \eqref{ZX3} and \eqref{protectedtrinvariance}, we only need the submatrix
\begin{align}\label{covprotectedternary}&\Sigma_p=
\left(\arraycolsep=2pt
\def\arraystretch{2.0}
\begin{array}{ccc}
 \sigma_{17,17} &  \sigma_{17,18} &
   \sigma_{17,19} \\
  \sigma_{18,17} &  \sigma_{18,18} &
    \sigma_{18,19} \\
  \sigma_{19,17} &  \sigma_{19,18} &
    \sigma_{19,19} \\
\end{array}
\right)=
\left(\arraycolsep=2pt
\def\arraystretch{2.0}
\begin{array}{ccc}
 \phantom{-}\frac{156031}{8085000} & -\frac{826069}{1387386000} &
   \phantom{-}\frac{3453169}{15030015000} \\
 -\frac{826069}{1387386000} & \phantom{-}\frac{2222557}{118918800} &
   -\frac{439517549}{87603516000} \\
 \phantom{-}\frac{3453169}{15030015000} & -\frac{439517549}{87603516000} &
   \phantom{-}\frac{142536826}{12384425625} \\
\end{array}
\right)
.
\end{align}
Summing the $\sigma_{i,j}  $  in \eqref{covprotectedternary}, 
which is equivalent to calculating $ (1,1,1)\Sigma_p(1,1,1)' $, we find
$$ \sigma^2_Z=\sum_{i=17}^{19}\sum_{j=17}^{19}\sigma_{i,j}=  \frac{1692302314867}{43692253605000}, $$
which completes the proof of Theorem \ref{main}.
\qed

\section{Leaves in ternary search trees} \label{Sleaves3}

Recall that a leaf is an internal node without internal children,
i.e., a node that contains at least one key and has no children except 
possibly external ones. 
The proof of Theorem \ref{main} yields also the following theorem.
(The corresponding result for a binary search tree was considered
already by Devroye \cite{Devroye1} using two different methods, one of them
a \Polya{} urn as here.) 

\begin{thm}\label{leaves}
Let $ L_n $ be the number of leaves in a ternary search tree.
Then,
$$
\frac{L_n-\frac{3}{10}n}{\sqrt{n}}\stackrel{d}
\longrightarrow \mathcal{N}\Bigpar{0,\frac{89}{2100}}.  $$
\end{thm}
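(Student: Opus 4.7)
The plan is to reuse the Pólya urn machinery from the proof of \refT{main} in \refS{S3} essentially verbatim: the same 19 types, the same activity vector $a$, the same transition matrix $A$ of Figure \ref{Atrinar}, the same largest eigenvalue $\lambda_1=1$ with eigenvector $v_1$ in (\ref{v1trin}), and the same asymptotic covariance matrix $\Sigma$ tabulated in the appendix. Only the linear functional that extracts the statistic of interest differs.

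First I would identify, for each type $i\in\{1,\dots,19\}$, the number $c_i$ of leaves of the ternary search tree $T$ that belong to the corresponding small tree in the forest. By construction of the forest (erasing edges between internal non-leaves), each small tree has a root that is a non-leaf of $T$, and every other internal node of the small tree is a leaf of $T$. Consequently, for a type labeled $(k_0,k_1,k_2)$ one simply has $c_i=k_1+k_2$. In particular $c_{17}=c_{18}=c_{19}=0$, since these are precisely the types whose sole non-external node is the (protected) root. Writing $c=(c_1,\dots,c_{19})'$, every leaf of $T$ is counted exactly once, so
$$
L_n=\sum_{i=1}^{19}c_iX_{n,i}=c'X_n.
$$

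Then \refT{polyathm} applies with no further work: its hypotheses were already checked in \refS{S3} (the spectral condition $\Re\lambda<\lambda_1/2$ for all $\lambda\neq\lambda_1$, and $a\cdot\E(\xi_i)=1$ for every $i$). Combined with the linear expression $L_n=c'X_n$, it yields
$$
\frac{L_n-n\mu_L}{\sqrt{n}}\dto\N(0,\sigma_L^2),
\qquad
\mu_L=\lambda_1\,c\cdot v_1=c\cdot v_1,
\qquad
\sigma_L^2=c'\Sigma c.
$$

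The remainder is arithmetic already set up in the proof of \refT{main}: reading the coordinates of $v_1$ from (\ref{v1trin}) one verifies $\mu_L=c\cdot v_1=3/10$, and contracting the asymptotic covariance matrix $\Sigma$ with $c$ on both sides in Mathematica gives $\sigma_L^2=89/2100$. The only real obstacle is the bookkeeping of matching each index $i\in\{1,\dots,19\}$ to the corresponding triple $(k_0,k_1,k_2)$ (equivalently, reading off the leaf count from each picture in Figure \ref{ternarytypes}); once $c$ is correctly assembled, the theorem is an immediate corollary of the computations performed for \refT{main}.
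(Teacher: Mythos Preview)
Your proposal is correct and is essentially the paper's own ``First proof'': the paper defines the same leaf-count vector (called $\ell$ there, written out explicitly as $(3,3,3,2,3,2,2,2,2,1,2,1,1,1,1,1,0,0,0)'$) and computes $\mu_L=\ell\cdot v_1=3/10$ and $\sigma_L^2=\ell'\Sigma\ell=89/2100$ exactly as you describe. The paper then also gives a second, independent proof using a much smaller \Polya{} urn with only three gap-types (matrix $A_L$ of size $3\times3$), which both simplifies the computation and serves as a partial check on the large covariance matrix $\Sigma$.
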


\begin{proof}[First proof]
Counting the number of
leaves (of the original ternary search tree) in each type in  Figure
\ref{ternarytypes}, we see that the number of 
leaves in a subtree of type $ i $, $i=1,\dots,19$, 
is given by the vector 
 \begin{align}\label{leaf}
 \ell=( 3, 3, 3, 2, 3, 2, 2, 2, 2, 1, 2, 1, 1, 1, 1, 1, 0, 0, 0)'.
 \end{align}
Hence, $L_n=\ell\cdot X_n$.
By the proof of Theorem \ref{main}, the vector $ X_n $ has asymptotically a
multivariate normal distribution, 
and it follows that 
\begin{align}\label{normalL}
n^{-1/2} (L_n-n\mu_L)\stackrel{d}\rightarrow \N(0,\sigma^2_L)
\end{align}
with, using (\ref{v1trin}) and (\ref{leaf}), 
\begin{align}\label{expectedleaf}
 \mu_L= \ell\cdot v_1=\frac{3}{10},
\end{align}
and, using the covariance matrix  $ \Sigma $ shown in the appendix,
\begin{align}\label{varianceleaf}
 \sigma^{2}_L= \ell'\,\Sigma\, \ell=\frac{89}{2100}. 
\end{align}
\end{proof}

\begin{figure}
\begin{minipage}{0.16\textwidth}
\centering
\raisebox{35pt}{\includegraphics[width=\linewidth]{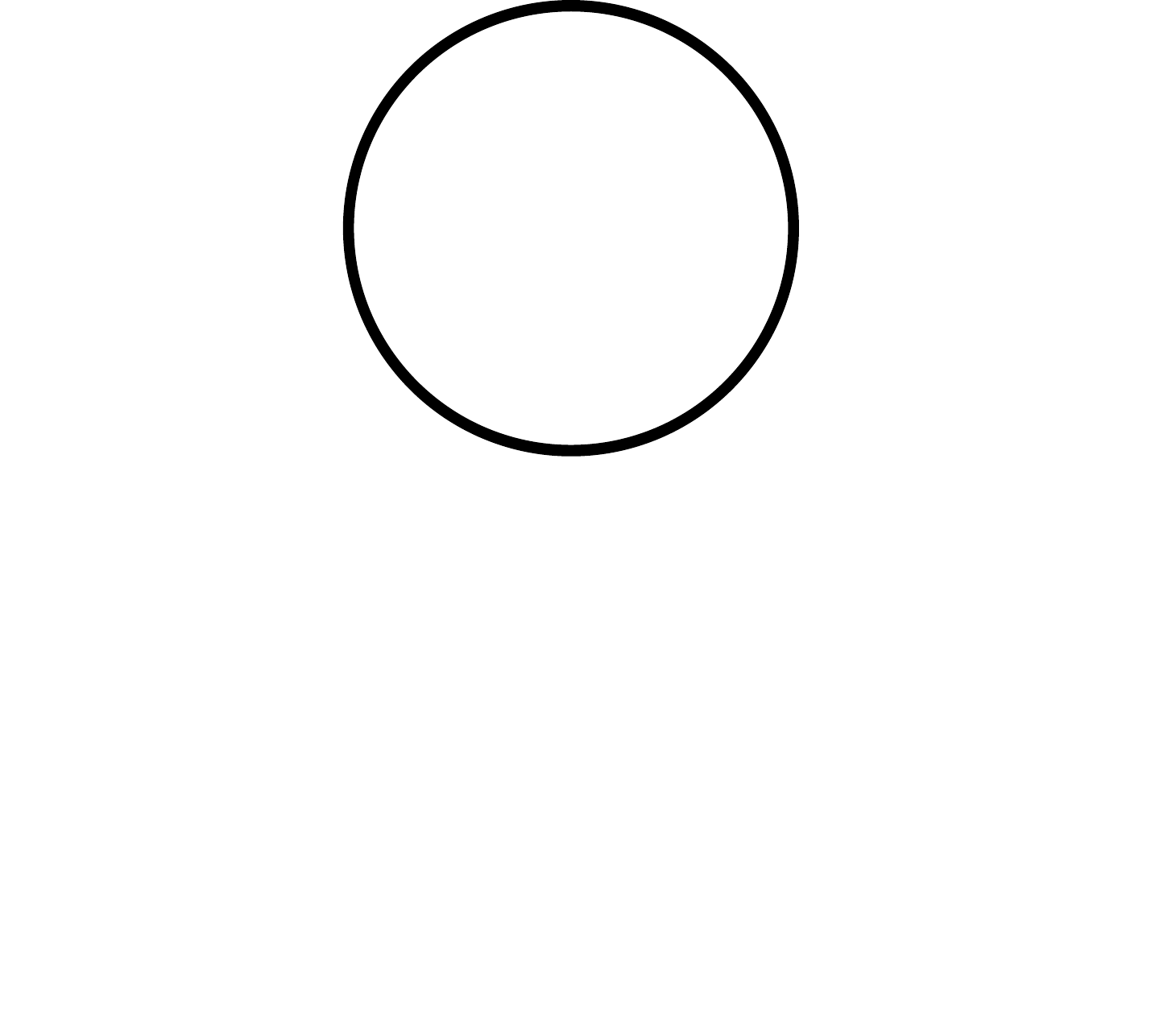}}
\caption{An external node which is not a child of a leaf.}
\label{leaf1}
\end{minipage}\hspace{1cm}
\begin{minipage}{0.16\textwidth}
\centering
\raisebox{30pt}{\includegraphics[width=\linewidth]{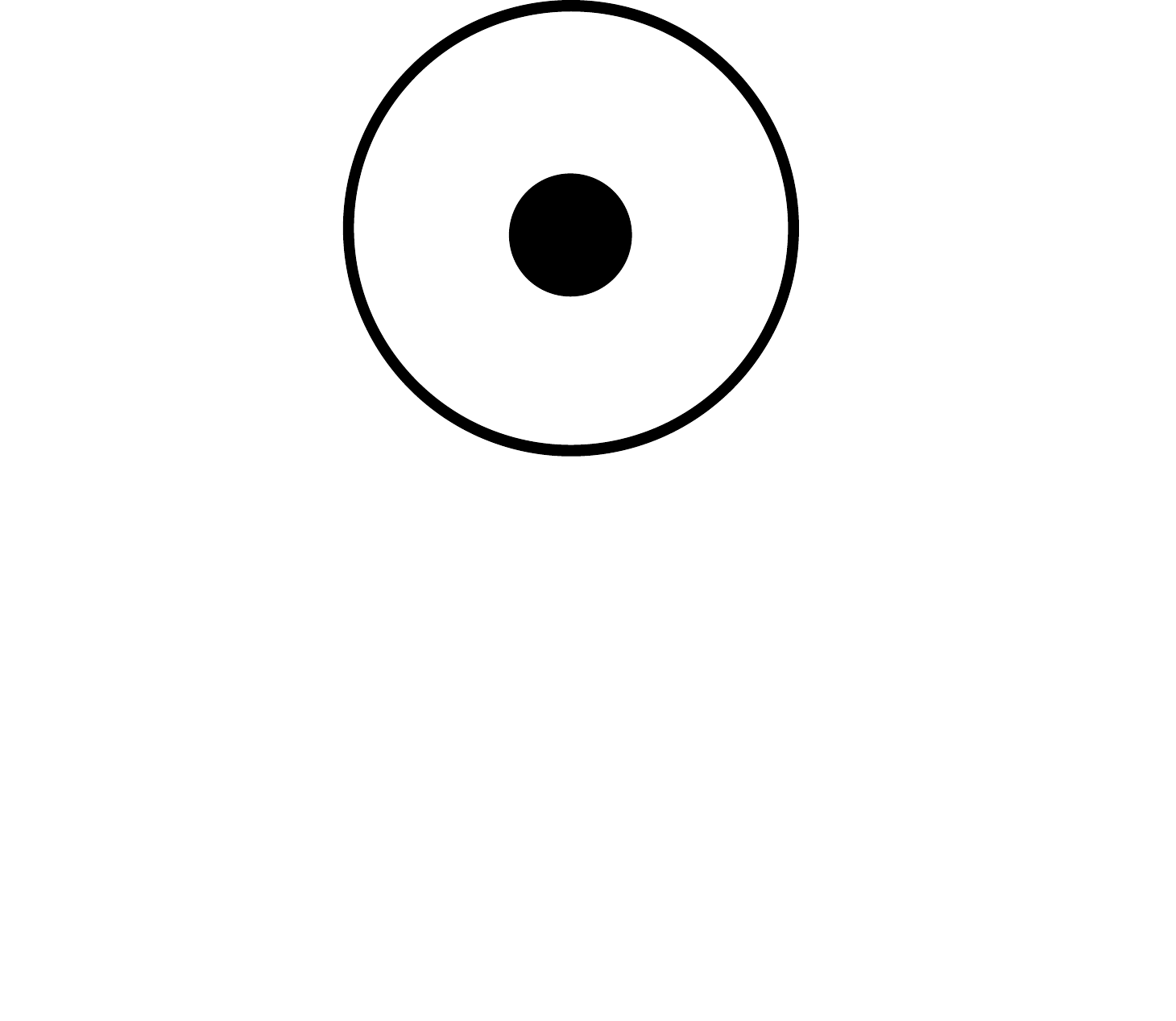}}
\caption{A leaf containing one key.}
\label{leaf2}
\end{minipage}
\hspace{1cm}
\begin{minipage}{0.16\textwidth}
\centering
\includegraphics[width=\linewidth]{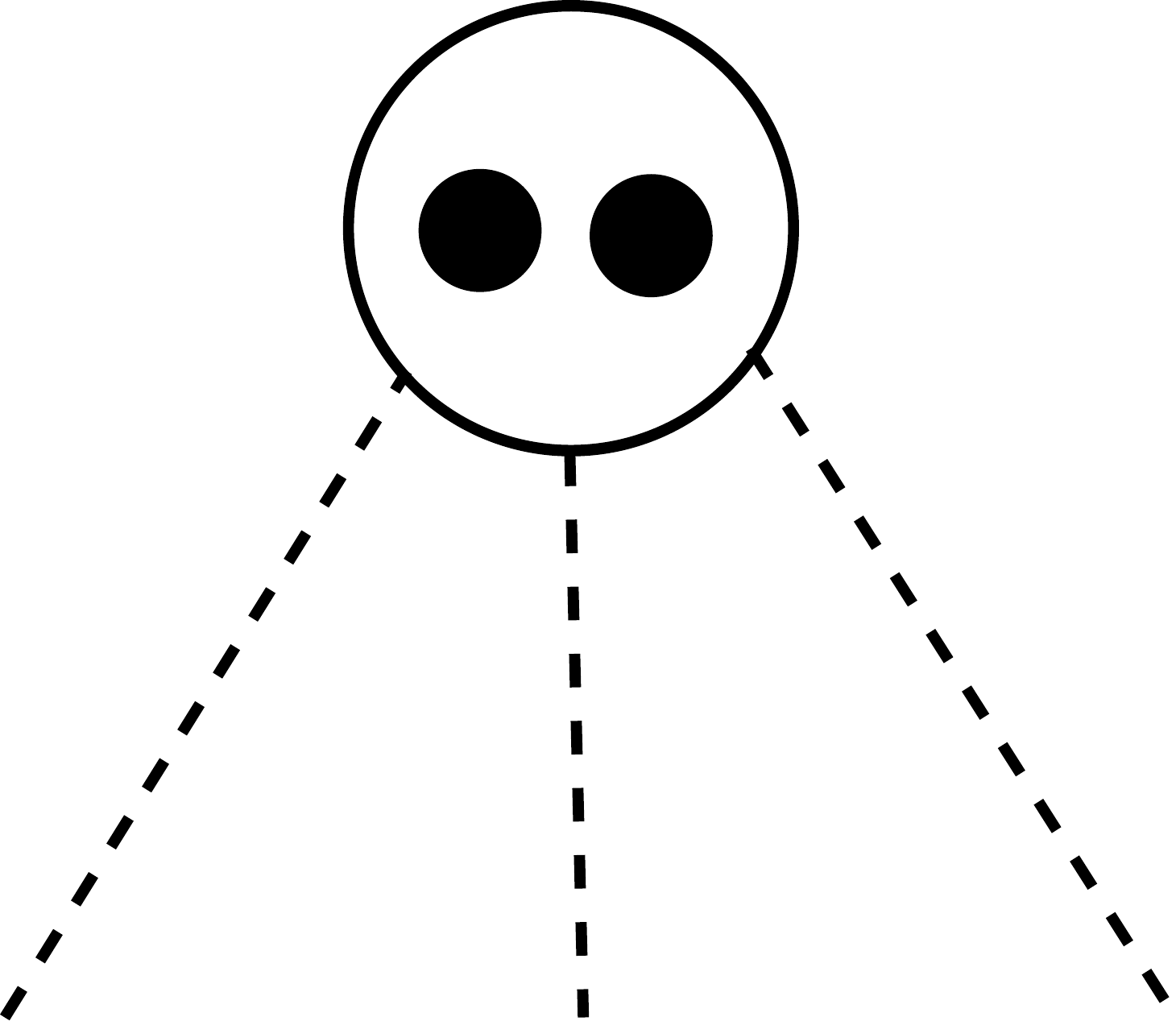}
\caption{A leaf containing two keys and its three external children.}
\label{leaf3}
\end{minipage}\hspace{1cm}
\begin{minipage}{0.16\textwidth}
\centering
\raisebox{12pt}{\includegraphics[width=\linewidth]{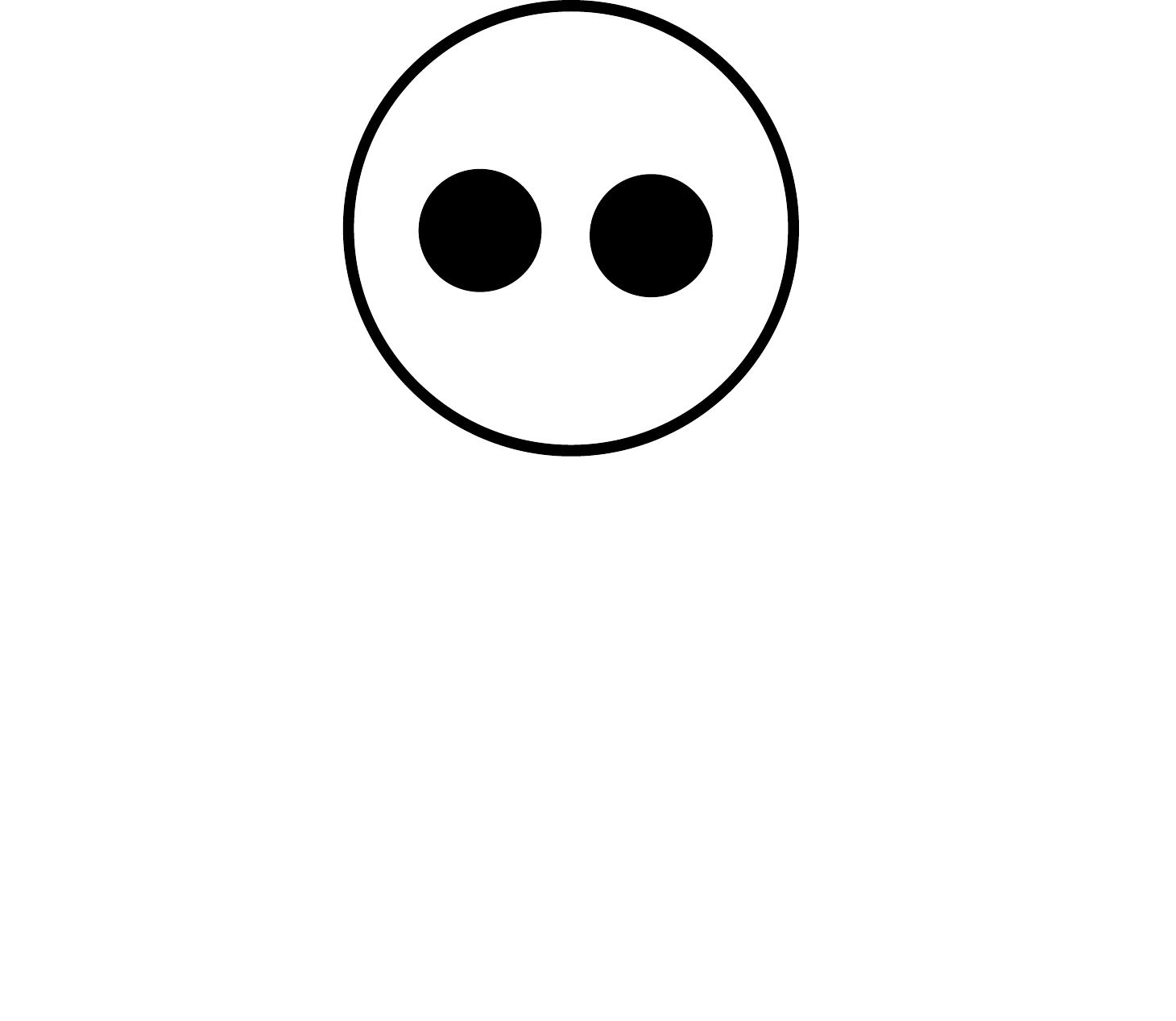}}
\caption{An internal node with two keys which is not a leaf.}
\label{leaf4}
\end{minipage}\hspace{0.8cm}
\caption{The different types characterizing leaves and non-leaves in ternary search trees.}\label{leaftypes}
\end{figure}

However, it is also possible to show Theorem \ref{leaves} using a much
simpler \Polya{} urn process,
where we only need to consider four different
types. We again chop up the ternary search tree into small subtrees, now
using the following types of subtrees.

 Type 1 is an external node which is not a child of a leaf. Type 2 is a node containing one key.  Type 3 is a leaf containing two keys together with its three external children. Type 4 is an internal node
containing two keys which is not a leaf (i.e., it has less than three
external children). The types are shown in Figure \ref{leaftypes}. Note that
all nodes in the ternary search tree belong to exactly one such subtree. 

A ball of type 1 has activity 1; when it is drawn it is replaced by one ball of type 2. A  ball of type 2 has activity 2; when it is drawn it is replaced by one ball of type 3. A ball of type 3 has activity 3; 
when it is drawn it is replaced by one ball of type 2, two balls of type 1 and one ball of type 4. A ball of type 4 has activity 0 and is thus never drawn.
The types that contain leaves are type 2 and type 3.

To simplify we can study another urn using the gaps as balls. 
Type 1 has one gap, type 2 
has two gaps, type 3 has three gaps and type 4 has 0 gaps. We label each gap
with the type it belongs to; thus the gaps have only the three types 1--3.
The gaps evolve as an urn with three types, with all activities 1 and 
the matrix $ A $ in \eqref{A} given by 
\begin{align}\label{gapleaf}
\def\+{\phantom{-}}
\begin{pmatrix}
 -1 & \+0 & \+2 \\
 \+2 & -2 & \+2 \\
 \+0 & \+3 & -3 \\
\end{pmatrix}.
\end{align} 
Since we consider the gaps (with activity 1) it is obvious that all columns
add to 1 (since we always add one ball to the urn). 
The eigenvalues of $ A $ are $ 1,-3,-4 $.
Theorem \ref{simplepolyathm} shows that $ (X_{n1},X_{n2},X_{n3}) $ has
asymptotically a multivariate normal distribution, where $ X_{ni} $ is the
number of balls of type $ i $ in the \Polya{} urn, i.e., the number of gaps
of type $i$. Note that the number of subtrees of Types 1--3 thus is
$(X_{n1}, X_{n2}/2, X_{n3}/3)$, which thus also is asymptotically
multivariate normal.

Since the number of
leaves $ L_n= X_{n2}/2+X_{n3}/3$, 
it follows that
$ L_n $ has asymptotically a normal distribution \eqref{normalL}.

To find the parameters $\mu_L$ and $\gss_L$,
we note that 
right eigenvectors of $ A $ 
corresponding
to the eigenvalues $ 1,-3,-4 $ are: 
\begin{align}\label{eigenleaftrin}
\def\+{\phantom{-}}
\frac{1}{10}\begin{pmatrix} 
  3\\4\\3
\end{pmatrix}
,\;
\frac{1}{2}\begin{pmatrix} 
 -1\\\+0\\\+1
\end{pmatrix}
,\;
\frac{1}{5}\begin{pmatrix} 
-2 \\ -1\\ \+3
\end{pmatrix},
\end{align}
and corresponding left eigenvectors of $ A $ are:
\begin{align}\label{eigenleaftrinleft}
\def\+{\phantom{-}}
\begin{pmatrix} 
  1\\1\\1
\end{pmatrix}
,\;
\begin{pmatrix} 
-3\\\+3\\-1
\end{pmatrix}
,\;
\begin{pmatrix} 
  \+2\\-3\\\+2
\end{pmatrix}.
\end{align}
Note that we have scaled the eigenvectors so that $u_i\cdot v_j=\gd_{ij}$
and (\ref{normalized}) holds.
We have $ a=(1,1,1)' $.
Since type 2 has two gaps and one leaf and type 3 has three gaps and one leaf, it follows that
$$ \mu_L=\mu_{2}+\mu_{3}=\frac1{10}(3, 4, 3)\cdot\Bigpar{0,\frac12,\frac13}
=\frac{3}{10}, $$ corresponding to 
(\ref{expectedleaf}).
By calculating $ B $, we get from Theorem \ref{simplepolyathm}, that the
covariance matrix $ \Sigma $  
is given by
\begin{align}\label{covleafternary}
\def\arraystretch{1.4}\left(
\begin{array}{rrr}
 \frac{479}{2100} & -\frac{7}{150} & -\frac{127}{700} \\[2pt]
 -\frac{7}{150} & \frac{32}{75} & -\frac{19}{50} \\
 -\frac{127}{700} & -\frac{19}{50} & \frac{393}{700} \\
\end{array}
\right).
\end{align}
We thus obtain
\begin{align}\label{varternaryleaf}
\def\arraystretch{1.4}
\sigma^2_L=\bigpar{0,\tfrac12,\tfrac13}
\left(
\begin{array}{rrr}
 \frac{479}{2100} & -\frac{7}{150} & -\frac{127}{700} \\
 -\frac{7}{150} & \frac{32}{75} & -\frac{19}{50} \\
 -\frac{127}{700} & -\frac{19}{50} & \frac{393}{700} \\
\end{array}
\right) 
\begin{pmatrix}
0\\ \frac12 \\ \frac13  
\end{pmatrix}
=\frac{89}{2100}
\end{align}
(corresponding to 
(\ref{varianceleaf})),
which completes the proof of Theorem \ref{leaves} with the simpler \Polya{}
urn model. 

The fact that we have obtained the asymptotic variance in \refT{leaves} in
two different ways, where one uses $\Sigma$ in \refS{S3} and the appendix,  
is also a partial verification of the computer calculations yielding
$\Sigma$.

\section{Higher $m$}\label{Sm}

\subsection{
The \Polya{} urn defined in \refS{Polya}}\label{AW}

The \Polya{} urn defined in \refS{Polya} can be used for any given $m$,
although the size of the matrices used in the calculations grow rapidly with
$m$. (For $m=4$ we have 69 types; for $m=10$ we would have 184755.)
However, the central condition $\Re\gl<\gl_1/2$ is not satisfied for large $m$.
We do not know any general formula for the eigenvalues of the matrix $A$,
but some of them are given as follows.

\begin{Lemma}\label{Lroot}
Let $m\ge2$. Then every root of the polynomial
\begin{equation}\label{phi}
  \phi_m(\gl):=\prod_{i=1}^{m-1}(\gl+i)-m!
\end{equation}
  is an eigenvalue of the matrix $A$ for the \Polya{} urn in \refS{Polya}.
\end{Lemma}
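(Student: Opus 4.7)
The plan is to exploit the fact that the simpler ``gap urn'' of \refS{mary} (one ball per gap, with $m-1$ types all of activity $1$) arises as a linear projection of the larger urn of \refS{Polya}: every eigenvalue of the small gap-urn matrix will then automatically be an eigenvalue of $A$.

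First I would write out the $(m-1)\times(m-1)$ transition matrix $\bar A$ of the gap urn. Drawing a gap at a node with $i$ keys, $0\le i\le m-3$, replaces the $i+1$ gaps of type $i$ at that node by $i+2$ gaps of type $i+1$; drawing a gap at a node with $m-2$ keys turns that node full and replaces its $m-1$ gaps of type $m-2$ by $m$ gaps of type $0$ at the newly created empty children. Hence $\bar A$ has diagonal entries $\bar A_{i,i}=-(i+1)$, subdiagonal entries $\bar A_{i+1,i}=i+2$, and a single corner entry $\bar A_{0,m-2}=m$ (which together collapse to $\bar A_{0,0}=1$ when $m=2$). Expanding $\det(\bar A-\gl I)$ along the first row, only the $(0,0)$- and $(0,m-2)$-entries contribute, and the two resulting minors are bidiagonal; evaluating them yields
\begin{equation*}
\det(\bar A-\gl I)=(-1)^{m-1}\Bigpar{\prod_{i=1}^{m-1}(\gl+i)-m!}=(-1)^{m-1}\phi_m(\gl),
\end{equation*}
so the eigenvalues of $\bar A$ are precisely the roots of $\phi_m$.

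Next I would introduce the linear map $\pi\colon\mathbb R^q\to\mathbb R^{m-1}$, $q=\binom{2m}m-1$, whose column indexed by the fine type $t=(k_0,\dots,k_{m-1})$ records the number of gaps of each type contained in a single subtree of fine type $t$:
\begin{equation*}
(\pi e_t)_0=k_0+mk_{m-1},\qquad (\pi e_t)_j=(j+1)k_j,\quad 1\le j\le m-2.
\end{equation*}
Applied to the full urn state $X_n$, the vector $\pi X_n$ is precisely the gap-count vector of the tree. The central step is the intertwining identity $\pi A=\bar A\pi$. To verify it columnwise, fix $t$; then $Ae_t=a_t\,\E\xi^{(t)}$ records the expected change of the fine state when one ball of type $t$ is drawn, so $\pi Ae_t=a_t\,\E[\pi\xi^{(t)}]$. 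On the other hand, the new key enters one of the $a_t$ gaps of the subtree chosen uniformly, and the induced change in the gap counts depends only on the type $i$ of the node hosting that gap; averaging therefore gives
\begin{equation*}
a_t\,\E[\pi\xi^{(t)}]=\sum_{i=0}^{m-2}(\pi e_t)_i\,\bar\xi^{(i)}=\bar A\,\pi e_t,
\end{equation*}
where $\bar\xi^{(i)}$ is the (deterministic) $i$-th column of $\bar A$.

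Finally, for any root $\gl$ of $\phi_m$ pick a left eigenvector $u$ of $\bar A$ with $u'\bar A=\gl u'$; the intertwining yields $u'\pi A=u'\bar A\pi=\gl\,u'\pi$, so $u'\pi\in\mathbb R^q$ is a left eigenvector of $A$ with eigenvalue $\gl$ provided $u'\pi\neq 0$. Nondegeneracy is immediate: the fine types $(m-1,0,\dots,0)$ and $(0,\dots,0,1,0,\dots,0)$ with a single $1$ in position $j$, $1\le j\le m-2$, produce columns of $\pi$ equal to $(m-1)e_0$ and $(j+1)e_j$ respectively, and these together span $\mathbb R^{m-1}$, so $u'\pi=0$ would force $u=0$. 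The main obstacle I anticipate is the clean columnwise verification of the intertwining $\pi A=\bar A\pi$---essentially a bookkeeping exercise matching the fine-urn transitions of \refS{Polya} against the local gap dynamics encoded in $\bar A$---after which the eigenvalue conclusion is a one-line argument.
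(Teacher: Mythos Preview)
Your proof is correct and follows essentially the same route as the paper: project the fine urn onto the $(m-1)$-type gap urn via a linear map, establish the intertwining $\pi A=\bar A\pi$ (the paper writes this as $TA=A_WT$, derived from the conditional-expectation formulas for the two urns), and pull back left eigenvectors of $\bar A$ to left eigenvectors of $A$. The only cosmetic differences are that you compute $\det(\bar A-\gl I)$ and the surjectivity of $\pi$ explicitly, whereas the paper cites the characteristic polynomial as known and deduces that $T$ is onto from the invertibility of $A_W$ together with the fact that its columns lie in the range of $T$.
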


\begin{proof}
Let 
$V_{in}$ be the number
of nodes containing exactly $i$ keys (thus $V_{0n}$ is the number of
external nodes), and consider the vector
$W_n=(W_{1,n},\dots,W_{m-1,n})$ where $W_{i,n}=iV_{i-1,n}$; thus $W_{i,n}$ is
the total number of gaps at nodes with $i$ gaps.
The random vector $W_n$ can also be described by a \Polya{} urn, see e.g.,
\cite[Example 7.8]{Janson} and
\cite[Section 8.1.3]{Mahmoud:Polya}; 
we denote the activity vector and the matrix
\eqref{A} for this urn by $a_W=(1,\dots,1)'$ and $A_W$. This means that the
expected 
change of the two vectors when a new key is added are given by
\begin{align}
  \E(X_{n+1}-X_n\mid X_n)
&=\frac{A X_n}{a\cdot X_n}
=\frac{A X_n}{n+1}, \label{ax}
\\
  \E(W_{n+1}-W_n\mid W_n)&=\frac{A_W W_n}{a_W\cdot W_n}
=\frac{A_W W_n}{n+1}.\label{ay}
\end{align}
Furthermore, the vector $X_n$ determines the number of nodes with different
numbers of keys, so there is a linear map $W_n = T X_n$. Consequently, by
\eqref{ax}--\eqref{ay}, for any $X_n$,
\begin{equation*}
  \begin{split}
TAX_n &= (n+1)T\E(X_{n+1}-X_n\mid X_n)
=(n+1)\E(W_{n+1}-W_n\mid X_n) = A_W W_n 
\\&= A_WTX_n,
  \end{split}
\end{equation*}
and thus $TA = A_W T$. 

The $ (m-1)\times (m-1 )$ matrix $ A_W $  is constructed as follows. 
Let $ a_{i,i}=-i$ for $ i\in \{1,\dots,m-1\} $, 
 $ a_{i,i-1}=i$ for $ i\in \{2,\dots,m\} $, 
$ a_{1,m-1}=m $ and all other elements $ a_{i,j}=0 $. 
I.e.,
 \begin{align}\label{AWmatrix}
A_W=\left(
\begin{array}{cccccc}
 -1 & 0 & 0 & \dots & 0 & m  \\
2 & -2 & 0 & \dots & 0 & 0 \\
0 & 3 &-3 & \dots & 0 & 0 \\
0 & 0 & 4 & \dots & 0 & 0 \\
\vdots & \vdots & \vdots & \ddots & \vdots & \vdots \\
0 & 0 & 0 &\dots & m-1 & -(m-1) 
\end{array}
\right).
\end{align}
As is well-known, 
the matrix $A_W$ has characteristic polynomial
$  \phi_m(\gl)$,
see e.g., \cite[Example 7.8]{Janson} or
\cite[Section 8.1.3]{Mahmoud:Polya}.
In particular, $0$ is not an
eigenvalue so $A_W$ is non-singular. The column vectors of $A_W$ are in the
range of $T$, and thus $T$ is onto.

Suppose that $\gl$ is a root of $\phi_m(\gl)=0$. Then $\gl$ is an eigenvalue
of $A_W$ and thus there exists a left eigenvector $u$ with 
$u' A_W = \gl u'$. Consequently,
\begin{equation}
  u'TA = u'A_W T = \gl u'T,
\end{equation}
so $(u'T)'=T'u$ is a left eigenvector of $A$. Since $T$ is onto, $T'$ is
injective and thus $T'u\neq0$. This shows that $\gl$ is an eigenvalue of $A$
too.   
\end{proof}

Recall that $\gl_1=1$ for the matrix $A$, since the total activity increases
by 1 at each step.
Let $\gl_1,\gl_2,\dots,\gl_{m-1}$ 
be the roots of \eqref{phi} in order of decreasing real parts.
It is well-known that $\gl_1=1$ and, moreover, that $\Re \gl_2\le1/2$ if and
only if $m\le26$, see \cite{MahmoudPittel} and \cite{FillKapur}.
Consequently, if $m\ge 27$, then \refL{Lroot} shows that
$A$ has an eigenvalue $\gl=\gl_2\neq\gl_1$ with $\Re \gl_2>1/2$,
and then $X_n$ is \emph{not} asymptotically normal. (See \cite{Janson} for
general results suggesting this, and \cite{ChernHwang} for a rigorous proof
in the present case, showing that the total number of internal nodes is
\emph{not} asymptotically normal.) Furthermore, if $\ga:=\Re\gl_2>1/2$, 
then $(X_n-\E X_n)/n^{\ga}$ is stochastically bounded, but has no limit in
distribution (the distribution oscillates), see
\cite{ChernHwang,ChauvinPouyanne,Janson}.

Some exceptional linear combinations of the variables $X_{ni}$ are
asymptotically normal also in such cases \cite{Janson}, but we conjecture
that for any $m\ge27$,
the number of protected nodes is not one of these exceptional cases
and that it has the same non-normal behaviour as just described for the
number of internal nodes.

On the other hand, if $m\le26$, although $A$ has a much larger dimension
that $A_W$, and thus presumably many more eigenvalues, we conjecture that 
all additional eigenvalues also have $\Re\gl<1/2$, so that \refT{polyathm}
applies showing that the number of protected vertices is asymptotically normal,
with asymptotic variance linear in $n$, just as for $m=2$ and 3 in Theorems
\ref{binary} and \ref{main}.
(This conjecture has been verified for $m\le6$ by Heimb\"urger
\cite{Heimburger}.) 

\subsection{One-protected nodes and leaves in $ m $-ary search trees.}\label{oneprotected}
As mentioned in Section \ref{intro}, the number of one-protected nodes and
the number of leaves (the complement of the one-protected nodes) are easier
to analyze than the two-protected 
 nodes, and
we prove normal limit laws for all $ m $-ary search trees where $ m\leq
26$. In these cases we can use a \Polya{} urn that is similar to the \Polya{}
urn that has earlier been used to study the total number of internal nodes
in an $ m $-ary search tree, see e.g.\ Mahmoud \cite{Mahmoud2} and
\cite[Section 8.1.3]{Mahmoud:Polya} or \cite[Example 7.8]{Janson}.
 
We can generalise the study of the number of leaves in ternary search tree
in \refS{Sleaves3}
to arbitrary $ m\ge2 $. (For $m=2$, there are minor modifications in the
formulas below; we leave these to the reader. 
As mentioned above, the case $m=2$ was considered
by Devroye \cite{Devroye1}.)
We have in general $ m+1 $ types, defined in analogy with 
Figure~\ref{leaftypes}: 
Type 1 is as before, Type $i$ with $2\le i\le m-1$ is a leaf
with $i-1$ keys, Type $m$ is a leaf with $m-1$ keys together with its $m$
external children, and Type $m+1$ is an internal non-leaf.

Let $ V_{i,n}'=V_{i,n} $ be the number of nodes containing exactly $ i $
keys for $ i\in \{1,\dots, m-2\} $;
let $ V_{0,n} '$ be the number of nodes
containing 0 keys (external nodes) that are not children of leaves; 
let $V_{m,n}' $ be the number of nodes containing $ m-1 $ keys that are leaves
(i.e., they have only external children);
finally, let $ V_{m+1,n }' $ be the
number of internal nodes that are not leaves (all containing $ m-1 $ keys). 
We consider again another, slightly simpler, urn with the balls representing
the gaps, giving them types $1,\dots,m$, and consider
the vector  $W_n'=(W_{1,n}',\dots, W_{m,n}') $ where $ W_{i,n}'=iV_{i-1,n}'$
is the total number of gaps of type $i$.
The random vector $ W_n' $ can be described by a \Polya{} urn, with all
activities 1.
We denote the $ m\times m $ matrix \eqref{A} for this urn by $ A_L $. 
It is a minor modification of the matrix $ A_W $ described in \refS{AW}, 
see \eqref{AWmatrix}; the entries of $ A_L $ are given by 
$ a_{i,i}=-i $ for $ i\in \{1,\dots,m\} $, 
 $ a_{i,i-1}=i$  for $ i\in \{2,\dots,m\} $, 
$ a_{1,m} =m-1$, $ a_{2,m} =2$, and all other entries $ a_{i,j}=0 $. 
I.e.,
 \begin{align}\label{Aallleaves}
A_L=\left(
\begin{array}{ccccccc}
 -1 & 0 & 0 & \dots & 0 & 0 & m-1 \\
2 & -2 & 0 & \dots & 0 & 0 & 2\\
0 & 3 &-3 & \dots & 0 & 0 & 0\\
0 & 0 & 4 & \dots & 0 & 0 & 0\\
\vdots & \vdots & \vdots & \vdots & \ddots & \vdots & \vdots\\
0 & 0 & 0 &\dots & m-1 & -(m-1) & 0 \\
0 & 0 & 0 & \dots &0 & m & -m
\end{array}
\right).
\end{align}
We can easily calculate the characteristic polynomial of $A_L$ and find that
it is 
 \begin{align}\label{characteristicleaves}\phi^L_m(\lambda)=(m+\lambda)\phi_m(\lambda),
\end{align} where $ \phi_m(\lambda) $ is the characteristic polynomial of $A_W $  in \eqref{phi}. 
Thus, $A_L$ has the same eigenvalues as $A$, plus the additional eigenvalue
$ \lambda=-m $.
Since $\phi_m$ has only simple roots \cite[Section 3.3]{Mahmoud:Evolution},
and $-m$ is not one of them, also $\phi^L_m$ has only simple roots.
Hence, $A_L$ has $m$ distinct eigenvalues, and is thus diagonalisable.
 
The largest eigenvalue of $A_L$ is $ \lambda_1=1 $ (as for $A$) 
and this eigenvalue corresponds to the right and left eigenvectors
 \begin{align}\label{eigenallleaves}
v_1=\frac{1}{H_{m}-1}\begin{pmatrix} 
  \frac{m-1}{2(m+1)}\\\frac{1}{3}\\\frac{1}{4}\\\vdots\\\frac{1}{m-1} \\\frac{1}{m}\\\frac{1}{m+1}
\end{pmatrix},
\qquad
u_1=\begin{pmatrix} 
 1\\1\\1\\ \vdots \\ 1\\1 \\1
\end{pmatrix},
\end{align}
 where we have normalized so that \eqref{normalized} holds ($ H_m $ denotes
 the $ m $th harmonic number).

Let $ L_{m,n} $ be the number of leaves in an
 $ m $-ary search tree with $n$ keys. 
Then
\begin{equation}\label{lmn}
  L_{m,n} = \sum_{i=1}^{m-1} V'_{i,n} = \sum_{k=2}^m \frac1k W'_{k,n}.
\end{equation}

 \begin{thm}\label{leavesmary}
Suppose that $3\le m\le 26$.
Let $ L_n $ be the number of leaves in an  $ m $-ary search tree.
Then,
\begin{equation}\label{Llim}
\frac{L_n-\mu_{L,{m}} n}{\sqrt{n}}\stackrel{d}
\longrightarrow \mathcal{N}\bigpar{0,\gss_{L,{m}}},  
\end{equation}
where  
\begin{equation}
\mu_{L,{m}}
=\frac{1}{H_{m}-1}\cdot \sum_{k=2} ^{m} \frac{1}{k(k+1)}
=\frac{1}{H_{m}-1}\cdot \frac{m-1}{2(m+1)},
\end{equation}
and $ \gss_{L,{m}} $ can be evaluated as
\begin{equation}
  \gss_{L,m} = \sum_{i,j=2}^m \frac{\gs_{ij}}{ij}
\end{equation}
where $(\gs_{ij})_{i,j=1}^m$ is given by
\eqref{simpleSigma}. 
\end{thm}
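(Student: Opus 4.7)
The plan is to apply Theorem \ref{simplepolyathm} to the \Polya{} urn with transition matrix $A_L$ described in \eqref{Aallleaves}, exploiting the formula \eqref{lmn} that expresses $L_{n} = \sum_{k=2}^m W'_{k,n}/k$ as a fixed linear functional of the urn state vector $W'_n$.

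\textbf{Step 1: Verify the hypotheses of \refT{simplepolyathm}.} All activities equal $1$, so $a_L\cdot\E\xi_i=1$ for every $i$, which gives $u_1=\etta$ and ensures the normalization machinery works as in Sections~\ref{S2}--\ref{S3}. Assumptions (A1)--(A7) are checked exactly as for the urn $A_W$ in \refS{AW}: entries off-diagonal are non-negative, every non-zero-activity type is dominating by inspection of the tree dynamics, essential extinction is impossible since some gap always remains, and the Perron--Frobenius theorem applied to $A_L$ yields (A3), (A4), (A6). Since the characteristic polynomial factors as \eqref{characteristicleaves} with $\phi_m$ having only simple roots (cited from \cite[Section~3.3]{Mahmoud:Evolution}) and with $-m$ not a root of $\phi_m$ (as $\phi_m(-m)=(-1)^{m-1}(m-1)!-m!\neq 0$), all $m$ eigenvalues of $A_L$ are distinct, whence $A_L$ is diagonalisable.

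\textbf{Step 2: Verify the eigenvalue condition $\Re\lambda<\lambda_1/2=1/2$ for $\lambda\neq\lambda_1$.} The eigenvalue $-m$ trivially satisfies this. For the remaining eigenvalues, which are precisely the roots of $\phi_m$, I would quote the results of Mahmoud--Pittel \cite{MahmoudPittel} and Fill--Kapur \cite{FillKapur} which establish $\Re\lambda_2<1/2$ exactly when $m\le 26$; this is where the hypothesis $m\le 26$ enters. For the largest eigenvalue $\lambda_1=1$, I would verify directly that the vectors in \eqref{eigenallleaves} are indeed left and right eigenvectors of $A_L$ with normalisation \eqref{normalized}: the identity $A_L v_1=v_1$ reduces row by row to the telescoping $k\cdot v_{1,k-1}-k\cdot v_{1,k}=v_{1,k}$ together with the top-row relation supplied by the corner entries $m-1$ and $2$, using $\sum_{k=2}^m\frac{1}{k(k+1)}=\frac{m-1}{2(m+1)}$.

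\textbf{Step 3: Assemble the conclusion.} By \refT{simplepolyathm}, $n^{-1/2}(W'_n-n\lambda_1 v_1)\dto\N(0,\Sigma)$ with $\Sigma$ given by \eqref{simpleSigma}. Since $L_n=\ell\cdot W'_n$ for the deterministic vector $\ell=(0,\tfrac12,\tfrac13,\dots,\tfrac1m)'$ by \eqref{lmn}, the continuous mapping theorem (or more directly the linearity statement in \refT{simplepolyathm}) yields the scalar convergence \eqref{Llim}. The mean is
\[
\mu_{L,m}=\ell\cdot v_1=\sum_{k=2}^m\frac{v_{1,k}}{k}=\frac{1}{H_m-1}\sum_{k=2}^m\frac{1}{k(k+1)}=\frac{1}{H_m-1}\cdot\frac{m-1}{2(m+1)},
\]
and the variance is $\sigma^2_{L,m}=\ell'\Sigma\,\ell=\sum_{i,j=2}^m\sigma_{ij}/(ij)$, exactly as stated.

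\textbf{Main obstacle.} The proof itself is essentially bookkeeping once \refT{simplepolyathm} is in hand; the real content is the eigenvalue bound $\Re\lambda<1/2$ for $m\le 26$, which however is not reproved here but quoted from \cite{MahmoudPittel,FillKapur}. The only computation requiring care is the verification that the explicit vector $v_1$ in \eqref{eigenallleaves} is the correctly normalised principal right eigenvector; after that, the mean computation becomes a one-line telescoping and the variance formula is inherited directly from \refT{simplepolyathm}.
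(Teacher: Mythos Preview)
Your proposal is correct and follows essentially the same route as the paper: verify (A1)--(A7), diagonalisability of $A_L$ via the factorisation \eqref{characteristicleaves} and the simplicity of the roots of $\phi_m$, the eigenvalue bound for $m\le26$ from \cite{MahmoudPittel,FillKapur}, then apply \refT{simplepolyathm} and read off the mean and variance through the linear functional \eqref{lmn} using the explicit $v_1$ in \eqref{eigenallleaves}. Your write-up is simply more detailed (e.g.\ the explicit check that $\phi_m(-m)\neq0$ and the row-by-row verification of $A_Lv_1=v_1$), but the strategy and ingredients are identical to the paper's.
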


\begin{proof}
As said above, for $m\le26$,
$\Re \gl<\gl_1/2=1/2$ for all eigenvalues $\gl\neq\gl_1$ of $A$,	
and thus also of $A_L$. Furthermore, $A$ is diagonalisable.
Hence, \refT{simplepolyathm} applies and shows asymptotic normality 
of $W'_n$. The result follows by \eqref{lmn}, using $v_1$ in
\eqref{eigenallleaves}. 
\end{proof}

\begin{rem} \label{expectedvalue2} Theorem \ref{leavesmary}  implies that 
$ \frac{E(L_n)}{n}\rightarrow \mu_{L,{m}}$, by the same argument as
  in Remark \ref{expectedvalues}. 
\end{rem}

For $m\ge27$, we expect the same non-normal asymptotic behaviour as for the
number of internal nodes \cite{ChernHwang,ChauvinPouyanne}, see \refS{AW}.

For the one-protected nodes we can use the first \Polya{} urn described
above for the leaves, with $ m+1 $ types.
For the leaves we could simplify by considering the
gaps and use a \Polya{} urn with $ m $ types, with all activities 1. 
However, now we also need to consider type $m+1$, which has 0 gaps. So in
the analysis of the one-protected nodes we use the urn with $ m+1 $
different types (as explained in the beginning of this subsection) where
types $ i\in \{1,\dots,m\} $  have activities $ 1,2,\dots, m $ and type $
m+1 $ has activity 0. In this \Polya{} urn, the one-protected nodes
correspond to type $ m+1 $. All other types correspond to  leaves or
external nodes.   
\refT{simplepolyathm} implies the following result (the proof is analogous to the proof of Theorem \ref{leavesmary}).

\begin{thm}\label{oneprotectednodes}
Suppose that $3\le m\le 26$.
Let $ Q_n $ be the number of one-protected nodes in an  $ m $-ary search tree.
Then,
\begin{equation}\label{Qlim}
\frac{Q_n-\mu_{Q,{m}} n}{\sqrt{n}}\stackrel{d}
\longrightarrow \mathcal{N}\bigpar{0,\gss_{Q,{m}}},   
\end{equation}
where  
\begin{equation}
\mu_{Q,{m}}
=\frac{1}{H_{m}-1}\cdot \frac{1}{(m+1)},  
\end{equation}
and $ \gss_{Q,{m}} $ can be evaluated as
\begin{equation}
  \gss_{Q,m} = \gs_{m+1,m+1}
\end{equation}
where $(\gs_{ij})_{i,j=1}^{m+1}$ is given by
\eqref{simpleSigma}. 
\end{thm}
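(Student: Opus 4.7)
The plan is to apply \refT{simplepolyathm} to the Pólya urn with $m+1$ types introduced just before the theorem, paralleling the proof of \refT{leavesmary}. The new twist is that we must track type $m+1$ (internal non-leaves) explicitly, since this type has activity $0$ and hence does not appear in the simpler gap urn used for leaves. Once joint asymptotic normality of $X_n$ is in place, both assertions follow at once from $Q_n = X_{n,m+1}$: we get $\mu_{Q,m} = v_1^{(m+1)}$ and $\gss_{Q,m} = \gs_{m+1,m+1}$.

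The first step is to identify the spectrum of the $(m+1)\times(m+1)$ matrix $A$. Because $a_{m+1}=0$, the last column of $A$ vanishes, so $0$ is an eigenvalue. The remaining $m\times m$ principal block is conjugate to the gap-urn matrix $A_L$ of \eqref{Aallleaves} through the diagonal similarity $D=\operatorname{diag}(1,2,\ldots,m)$: this is because the tree-urn and gap-urn counts of the same underlying search tree are linearly related by $D$ on the first $m$ coordinates (a type-$j$ tree-urn ball carries $j$ gaps of type $j$ for $1\le j\le m$). Hence the spectrum of $A$ is $\{0,-m\}$ together with the roots of $\phi_m$ in \eqref{phi}; these $m+1$ values are all distinct (since $\phi_m$ has simple roots, all different from $0$ and $-m$), so $A$ is diagonalisable, $\gl_1=1$, and for $m\le 26$ every other eigenvalue has real part less than $1/2$ by \cite{MahmoudPittel,FillKapur}. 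The remaining hypotheses (A1)--(A7) are verified exactly as in \refS{Polya} (every type is either dominating or has activity $0$, and the tree always contains at least one ball of some type in $\{1,\ldots,m\}$ for $n\ge m-1$), so \refT{simplepolyathm} yields the multivariate CLT for $X_n$ and hence \eqref{Qlim}.

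The one substantive calculation is $\mu_{Q,m}$. Solving $Av_1 = v_1$, the middle rows give the two-term recurrence $v_1^{(j)} = \frac{j-1}{j+1} v_1^{(j-1)}$ for $3\le j\le m$; the first two rows give $v_1^{(1)} = \frac{m(m-1)}{2} v_1^{(m)}$ and $v_1^{(2)} = \frac{m(m+1)}{6} v_1^{(m)}$; and the last row gives $v_1^{(m+1)} = m v_1^{(m)}$. Combined, these yield $v_1^{(j)} = \frac{m(m+1)}{j(j+1)} v_1^{(m)}$ for $2 \le j \le m$. Enforcing $a \cdot v_1 = 1$ and evaluating the resulting sum by $\sum_{j=2}^{m} \frac{1}{j+1} = H_{m+1} - \frac{3}{2} = H_m + \frac{1}{m+1} - \frac{3}{2}$ produces $v_1^{(m)} = \frac{1}{m(m+1)(H_m-1)}$, and hence $\mu_{Q,m} = v_1^{(m+1)} = \frac{1}{(m+1)(H_m-1)}$, exactly as claimed. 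The variance is simply the extraction of the $(m+1,m+1)$ entry of the asymptotic covariance matrix \eqref{simpleSigma}; no closed form is asserted by the theorem. The main obstacle is thus just the careful eigenvector bookkeeping above, together with the verification (via the similarity with $A_L$) that the spectrum is well-behaved for $m\le 26$.
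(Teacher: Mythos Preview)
Your proposal is correct and follows essentially the same approach as the paper, which simply states that the proof is analogous to that of \refT{leavesmary} (apply \refT{simplepolyathm} to the $(m{+}1)$-type urn and read off $\mu_{Q,m}$ and $\gss_{Q,m}$). You have supplied the details the paper omits: the block structure (zero last column, hence an eigenvalue $0$) together with the diagonal similarity $A' = D^{-1}A_L D$ identifying the remaining spectrum with that of $A_L$, and the explicit eigenvector recursion yielding $v_1^{(m+1)} = \frac{1}{(m+1)(H_m-1)}$; both computations check out.
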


This urn can also be used to study the number of leaves, giving another
proof of \refT{leavesmary}. (Note that $\gs_{ij}$ refers to different urns
and thus has different meanings in Theorems \ref{leavesmary} and
\ref{oneprotectednodes}.) 
Moreover, we can study $L_n$ and $Q_n$ together
and obtain joint asymptotic normality for $m\le26$; 
the covariance $\gs_{LQ,m}$ of the limit variables in \eqref{Llim} and
\eqref{Qlim} equals $\sum_{i=1}^m\gs_{i,m+1}$ with 
$(\gs_{ij})_{i,j=1}^{m+1}$ 
as in \refT{oneprotectednodes}.
In particular, this
implies the well-known asymptotic normality of the total number of internal
nodes $I_n=L_n+Q_n$, see e.g.\
 \cite{MahmoudPittel,Mahmoud:Evolution,LewMahmoud,ChernHwang,Mahmoud2,FillKapur,
Mahmoud:Polya}.  

\begin{example}
For a binary search tree ($m=2$), 
a straightforward calculation of 
the covariance matrix $\Sigma=(\gs_{ij})_{i,j=1}^3$
in \refT{oneprotectednodes} yields
\begin{align}
\def\arraystretch{1.4}
\Sigma=
\left(
\begin{array}{rrr}
 \frac{8}{45} & -\frac{4}{45} & \frac{4}{45} \\
 -\frac{4}{45} & \frac{2}{45} & -\frac{2}{45} \\
 \frac{4}{45} & -\frac{2}{45} & \frac{2}{45} \\
\end{array}
\right).
\end{align}
Hence
\begin{align}\label{varternaryleafallm2}
\sigma^2_{L,2}=(0,1,0)\,\Sigma\, (0,1,0)'=\gs_{22}=\frac{2}{45},
\end{align}
as shown by Devroye \cite{Devroye1}.
Similarly,
$\sigma^2_{Q,2}
=\gs_{33}
=\frac{2}{45}$
and 
$\gs_{LQ,2}=\gs_{23}=-\frac{2}{45}$.
(We have $\gss_{L,2}=\gss_{Q,2}=-\gs_{LQ,2}$
since the total number of internal
nodes $L_n+Q_n=I_n=n$ is deterministic when $m=2$.)
\end{example}

\begin{example}
For a ternary search tree ($m=3$),    
similarly
(cf.~\eqref{covleafternary} for the corresponding urn using the gaps as in 
\refT{leavesmary})
\begin{align}
\def\arraystretch{1.4}
\Sigma=
\left(
\begin{array}{rrrr}
 \frac{479}{2100} & -\frac{7}{300} & -\frac{127}{2100} & \frac{101}{1400} \\
 -\frac{7}{300} & \frac{8}{75} & -\frac{19}{300} & \frac{1}{100} \\
 -\frac{127}{2100} & -\frac{19}{300} & \frac{131}{2100} & -\frac{43}{1400} \\
 \frac{101}{1400} & \frac{1}{100} & -\frac{43}{1400} & \frac{9}{350} \\
\end{array}
\right).
\end{align}
Hence,
cf.~\eqref{varianceleaf} and (\ref{varternaryleaf}), 
\begin{align}\label{varternaryleafallm3}
\sigma^2_{L,3}&=(0,1,1,0)\,\Sigma\,(0,1,1,0)'=\frac{89}{2100},
\\
\sigma^2_{Q,3}&=(0,0,0,1)\,\Sigma\,(0,0,0,1)'=\frac{9}{350},
\\
\sigma_{LQ,3}&=(0,1,1,0)\,\Sigma\,(0,0,0,1)'=-\frac{29}{1400}.
\end{align}
We also obtain the corresponding asymptotic variance
$(0,1,1,1)\,\Sigma\,(0,1,1,1)'=\gss_{L,3}+\gss_{Q,3}+2\gs_{LQ,3}= \frac2{75}$
for the number of internal nodes $L_n+Q_n$, as found by 
Mahmoud and Pittel \cite{MahmoudPittel}.
 \end{example}

\textbf{Acknowledgements:} We would like to thank Hosam M. Mahmoud and
Mark D. Ward for valuable discussions.

\appendix 
\section{Appendix}

\begin{align}\label{projtrinar}
P_I=\tiny{\left(\arraycolsep=2pt\def\arraystretch{2.0}
\begin{array}{rrrrrrrrrrrrrrrrrrr}
 \frac{697}{700} & -\frac{2}{525} & -\frac{1}{300} & -\frac{1}{300} &
   -\frac{1}{350} & -\frac{1}{350} & -\frac{1}{350} & -\frac{1}{420} &
   -\frac{1}{420} & -\frac{1}{420} & -\frac{1}{525} & -\frac{1}{525} &
   -\frac{1}{525} & -\frac{1}{700} & -\frac{1}{700} & -\frac{1}{1050} &
   -\frac{1}{1050} & -\frac{1}{2100} & 0 \\
 -\frac{3}{140} & \frac{103}{105} & -\frac{1}{60} & -\frac{1}{60} &
   -\frac{1}{70} & -\frac{1}{70} & -\frac{1}{70} & -\frac{1}{84} & -\frac{1}{84}
   & -\frac{1}{84} & -\frac{1}{105} & -\frac{1}{105} & -\frac{1}{105} &
   -\frac{1}{140} & -\frac{1}{140} & -\frac{1}{210} & -\frac{1}{210} &
   -\frac{1}{420} & 0 \\
 -\frac{27}{700} & -\frac{6}{175} & \frac{97}{100} & -\frac{3}{100} &
   -\frac{9}{350} & -\frac{9}{350} & -\frac{9}{350} & -\frac{3}{140} &
   -\frac{3}{140} & -\frac{3}{140} & -\frac{3}{175} & -\frac{3}{175} &
   -\frac{3}{175} & -\frac{9}{700} & -\frac{9}{700} & -\frac{3}{350} &
   -\frac{3}{350} & -\frac{3}{700} & 0 \\
 -\frac{27}{700} & -\frac{6}{175} & -\frac{3}{100} & \frac{97}{100} &
   -\frac{9}{350} & -\frac{9}{350} & -\frac{9}{350} & -\frac{3}{140} &
   -\frac{3}{140} & -\frac{3}{140} & -\frac{3}{175} & -\frac{3}{175} &
   -\frac{3}{175} & -\frac{9}{700} & -\frac{9}{700} & -\frac{3}{350} &
   -\frac{3}{350} & -\frac{3}{700} & 0 \\
 -\frac{9}{350} & -\frac{4}{175} & -\frac{1}{50} & -\frac{1}{50} &
   \frac{172}{175} & -\frac{3}{175} & -\frac{3}{175} & -\frac{1}{70} &
   -\frac{1}{70} & -\frac{1}{70} & -\frac{2}{175} & -\frac{2}{175} &
   -\frac{2}{175} & -\frac{3}{350} & -\frac{3}{350} & -\frac{1}{175} &
   -\frac{1}{175} & -\frac{1}{350} & 0 \\
 -\frac{3}{100} & -\frac{2}{75} & -\frac{7}{300} & -\frac{7}{300} &
   -\frac{1}{50} & \frac{49}{50} & -\frac{1}{50} & -\frac{1}{60} & -\frac{1}{60}
   & -\frac{1}{60} & -\frac{1}{75} & -\frac{1}{75} & -\frac{1}{75} &
   -\frac{1}{100} & -\frac{1}{100} & -\frac{1}{150} & -\frac{1}{150} &
   -\frac{1}{300} & 0 \\
 -\frac{27}{175} & -\frac{24}{175} & -\frac{3}{25} & -\frac{3}{25} &
   -\frac{18}{175} & -\frac{18}{175} & \frac{157}{175} & -\frac{3}{35} &
   -\frac{3}{35} & -\frac{3}{35} & -\frac{12}{175} & -\frac{12}{175} &
   -\frac{12}{175} & -\frac{9}{175} & -\frac{9}{175} & -\frac{6}{175} &
   -\frac{6}{175} & -\frac{3}{175} & 0 \\
 -\frac{3}{35} & -\frac{8}{105} & -\frac{1}{15} & -\frac{1}{15} & -\frac{2}{35}
   & -\frac{2}{35} & -\frac{2}{35} & \frac{20}{21} & -\frac{1}{21} &
   -\frac{1}{21} & -\frac{4}{105} & -\frac{4}{105} & -\frac{4}{105} &
   -\frac{1}{35} & -\frac{1}{35} & -\frac{2}{105} & -\frac{2}{105} &
   -\frac{1}{105} & 0 \\
 -\frac{9}{50} & -\frac{4}{25} & -\frac{7}{50} & -\frac{7}{50} & -\frac{3}{25} &
   -\frac{3}{25} & -\frac{3}{25} & -\frac{1}{10} & \frac{9}{10} & -\frac{1}{10}
   & -\frac{2}{25} & -\frac{2}{25} & -\frac{2}{25} & -\frac{3}{50} &
   -\frac{3}{50} & -\frac{1}{25} & -\frac{1}{25} & -\frac{1}{50} & 0 \\
 -\frac{9}{50} & -\frac{4}{25} & -\frac{7}{50} & -\frac{7}{50} & -\frac{3}{25} &
   -\frac{3}{25} & -\frac{3}{25} & -\frac{1}{10} & -\frac{1}{10} & \frac{9}{10}
   & -\frac{2}{25} & -\frac{2}{25} & -\frac{2}{25} & -\frac{3}{50} &
   -\frac{3}{50} & -\frac{1}{25} & -\frac{1}{25} & -\frac{1}{50} & 0 \\
 -\frac{9}{140} & -\frac{2}{35} & -\frac{1}{20} & -\frac{1}{20} & -\frac{3}{70}
   & -\frac{3}{70} & -\frac{3}{70} & -\frac{1}{28} & -\frac{1}{28} &
   -\frac{1}{28} & \frac{34}{35} & -\frac{1}{35} & -\frac{1}{35} &
   -\frac{3}{140} & -\frac{3}{140} & -\frac{1}{70} & -\frac{1}{70} &
   -\frac{1}{140} & 0 \\
 -\frac{9}{70} & -\frac{4}{35} & -\frac{1}{10} & -\frac{1}{10} & -\frac{3}{35} &
   -\frac{3}{35} & -\frac{3}{35} & -\frac{1}{14} & -\frac{1}{14} & -\frac{1}{14}
   & -\frac{2}{35} & \frac{33}{35} & -\frac{2}{35} & -\frac{3}{70} &
   -\frac{3}{70} & -\frac{1}{35} & -\frac{1}{35} & -\frac{1}{70} & 0 \\
 -\frac{27}{50} & -\frac{12}{25} & -\frac{21}{50} & -\frac{21}{50} &
   -\frac{9}{25} & -\frac{9}{25} & -\frac{9}{25} & -\frac{3}{10} & -\frac{3}{10}
   & -\frac{3}{10} & -\frac{6}{25} & -\frac{6}{25} & \frac{19}{25} &
   -\frac{9}{50} & -\frac{9}{50} & -\frac{3}{25} & -\frac{3}{25} & -\frac{3}{50}
   & 0 \\
 -\frac{3}{25} & -\frac{8}{75} & -\frac{7}{75} & -\frac{7}{75} & -\frac{2}{25} &
   -\frac{2}{25} & -\frac{2}{25} & -\frac{1}{15} & -\frac{1}{15} & -\frac{1}{15}
   & -\frac{4}{75} & -\frac{4}{75} & -\frac{4}{75} & \frac{24}{25} &
   -\frac{1}{25} & -\frac{2}{75} & -\frac{2}{75} & -\frac{1}{75} & 0 \\
 -\frac{36}{175} & -\frac{32}{175} & -\frac{4}{25} & -\frac{4}{25} &
   -\frac{24}{175} & -\frac{24}{175} & -\frac{24}{175} & -\frac{4}{35} &
   -\frac{4}{35} & -\frac{4}{35} & -\frac{16}{175} & -\frac{16}{175} &
   -\frac{16}{175} & -\frac{12}{175} & \frac{163}{175} & -\frac{8}{175} &
   -\frac{8}{175} & -\frac{4}{175} & 0 \\
 -\frac{3}{20} & -\frac{2}{15} & -\frac{7}{60} & -\frac{7}{60} & -\frac{1}{10} &
   -\frac{1}{10} & -\frac{1}{10} & -\frac{1}{12} & -\frac{1}{12} & -\frac{1}{12}
   & -\frac{1}{15} & -\frac{1}{15} & -\frac{1}{15} & -\frac{1}{20} &
   -\frac{1}{20} & \frac{29}{30} & -\frac{1}{30} & -\frac{1}{60} & 0 \\
 -\frac{9}{50} & -\frac{4}{25} & -\frac{7}{50} & -\frac{7}{50} & -\frac{3}{25} &
   -\frac{3}{25} & -\frac{3}{25} & -\frac{1}{10} & -\frac{1}{10} & -\frac{1}{10}
   & -\frac{2}{25} & -\frac{2}{25} & -\frac{2}{25} & -\frac{3}{50} &
   -\frac{3}{50} & -\frac{1}{25} & \frac{24}{25} & -\frac{1}{50} & 0 \\
 -\frac{27}{140} & -\frac{6}{35} & -\frac{3}{20} & -\frac{3}{20} & -\frac{9}{70}
   & -\frac{9}{70} & -\frac{9}{70} & -\frac{3}{28} & -\frac{3}{28} &
   -\frac{3}{28} & -\frac{3}{35} & -\frac{3}{35} & -\frac{3}{35} &
   -\frac{9}{140} & -\frac{9}{140} & -\frac{3}{70} & -\frac{3}{70} &
   \frac{137}{140} & 0 \\
 -\frac{9}{25} & -\frac{8}{25} & -\frac{7}{25} & -\frac{7}{25} & -\frac{6}{25} &
   -\frac{6}{25} & -\frac{6}{25} & -\frac{1}{5} & -\frac{1}{5} & -\frac{1}{5} &
   -\frac{4}{25} & -\frac{4}{25} & -\frac{4}{25} & -\frac{3}{25} & -\frac{3}{25}
   & -\frac{2}{25} & -\frac{2}{25} & -\frac{1}{25} & 1 \\
\end{array}
\right)
 }\end{align}

\begin{align}\label{Bternary}
B=\tiny{\left(\arraycolsep=2pt\def\arraystretch{2.0}
\begin{array}{rrrrrrrrrrrrrrrrrrr}
 \frac{19}{2100} & -\frac{1}{210} & 0 & 0 & 0 & -\frac{3}{700} & 0 & 0 & 0 & 0 &
   0 & 0 & -\frac{3}{700} & 0 & 0 & 0 & 0 & 0 & 0 \\
 -\frac{1}{210} & \frac{17}{420} & -\frac{3}{175} & -\frac{3}{700} & 0 & 0 & 0 &
   -\frac{1}{70} & 0 & 0 & 0 & 0 & -\frac{1}{70} & 0 & 0 & 0 & 0 & 0 & 0 \\
 0 & -\frac{3}{175} & \frac{9}{140} & 0 & -\frac{3}{175} & 0 & -\frac{3}{175} &
   0 & 0 & 0 & -\frac{9}{700} & 0 & -\frac{9}{700} & 0 & 0 & 0 & 0 & 0 & 0 \\
 0 & -\frac{3}{700} & 0 & \frac{9}{140} & 0 & 0 & -\frac{6}{175} & 0 & 0 & 0 & 0
   & -\frac{9}{350} & -\frac{9}{350} & 0 & 0 & 0 & 0 & 0 & 0 \\
 0 & 0 & -\frac{3}{175} & 0 & \frac{13}{350} & 0 & 0 & 0 & -\frac{1}{50} & 0 & 0
   & 0 & 0 & 0 & 0 & 0 & 0 & 0 & 0 \\
 -\frac{3}{700} & 0 & 0 & 0 & 0 & \frac{13}{300} & 0 & -\frac{2}{105} & 0 & 0 &
   0 & 0 & -\frac{11}{700} & -\frac{1}{50} & 0 & 0 & 0 & 0 & 0 \\
 0 & 0 & -\frac{3}{175} & -\frac{6}{175} & 0 & 0 & \frac{39}{175} & 0 &
   -\frac{2}{25} & -\frac{1}{25} & 0 & 0 & -\frac{9}{175} & 0 & -\frac{9}{175} &
   0 & 0 & 0 & 0 \\
 0 & -\frac{1}{70} & 0 & 0 & 0 & -\frac{2}{105} & 0 & \frac{11}{105} & 0 & 0 &
   -\frac{1}{35} & -\frac{1}{70} & -\frac{1}{70} & 0 & 0 & -\frac{1}{35} & 0 & 0
   & 0 \\
 0 & 0 & 0 & 0 & -\frac{1}{50} & 0 & -\frac{2}{25} & 0 & \frac{11}{50} & 0 & 0 &
   0 & -\frac{3}{25} & 0 & 0 & 0 & 0 & 0 & 0 \\
 0 & 0 & 0 & 0 & 0 & 0 & -\frac{1}{25} & 0 & 0 & \frac{11}{50} & 0 & 0 &
   -\frac{9}{50} & 0 & 0 & 0 & -\frac{3}{50} & 0 & 0 \\
 0 & 0 & -\frac{9}{700} & 0 & 0 & 0 & 0 & -\frac{1}{35} & 0 & 0 & \frac{9}{140}
   & 0 & \frac{9}{700} & 0 & -\frac{4}{175} & 0 & 0 & 0 & 0 \\
 0 & 0 & 0 & -\frac{9}{350} & 0 & 0 & 0 & -\frac{1}{70} & 0 & 0 & 0 &
   \frac{9}{70} & -\frac{3}{175} & 0 & -\frac{8}{175} & 0 & 0 & -\frac{3}{70} &
   0 \\
 -\frac{3}{700} & -\frac{1}{70} & -\frac{9}{700} & -\frac{9}{350} & 0 &
   -\frac{11}{700} & -\frac{9}{175} & -\frac{1}{70} & -\frac{3}{25} &
   -\frac{9}{50} & \frac{9}{700} & -\frac{3}{175} & \frac{27}{50} &
   -\frac{1}{50} & \frac{9}{175} & \frac{1}{35} & \frac{3}{50} & \frac{3}{70} &
   \frac{1}{25} \\
 0 & 0 & 0 & 0 & 0 & -\frac{1}{50} & 0 & 0 & 0 & 0 & 0 & 0 & -\frac{1}{50} &
   \frac{7}{75} & 0 & -\frac{1}{30} & 0 & 0 & -\frac{1}{25} \\
 0 & 0 & 0 & 0 & 0 & 0 & -\frac{9}{175} & 0 & 0 & 0 & -\frac{4}{175} &
   -\frac{8}{175} & \frac{9}{175} & 0 & \frac{4}{25} & 0 & -\frac{1}{25} & 0 & 0
   \\
 0 & 0 & 0 & 0 & 0 & 0 & 0 & -\frac{1}{35} & 0 & 0 & 0 & 0 & \frac{1}{35} &
   -\frac{1}{30} & 0 & \frac{1}{12} & 0 & -\frac{3}{140} & 0 \\
 0 & 0 & 0 & 0 & 0 & 0 & 0 & 0 & 0 & -\frac{3}{50} & 0 & 0 & \frac{3}{50} & 0 &
   -\frac{1}{25} & 0 & \frac{1}{10} & 0 & 0 \\
 0 & 0 & 0 & 0 & 0 & 0 & 0 & 0 & 0 & 0 & 0 & -\frac{3}{70} & \frac{3}{70} & 0 &
   0 & -\frac{3}{140} & 0 & \frac{9}{140} & 0 \\
 0 & 0 & 0 & 0 & 0 & 0 & 0 & 0 & 0 & 0 & 0 & 0 & \frac{1}{25} & -\frac{1}{25} &
   0 & 0 & 0 & 0 & \frac{1}{25} \\
\end{array}
\right)}\end{align}

\newpage
\thispagestyle{empty}
\begin{changemargin}{-1cm}{-1cm}

\rotatebox{90}{$\Sigma=$\\
$\frac{1}{2100}\cdot\tiny{\left(
\arraycolsep=-0.7pt\def\arraystretch{2.9}\begin{array}{rrrrrrrrrrrrrrrrrrr}
 \frac{41376542411}{41611670100} & -\frac{23408479}{876035160} &
   -\frac{24963443}{547521975} & -\frac{24963443}{547521975} &
   -\frac{338777}{11711700} & -\frac{288002937}{9247037800} &
   -\frac{338777}{1951950} & -\frac{61814939}{730029300} &
   -\frac{2497361}{12882870} & -\frac{2497361}{12882870} &
   -\frac{6678998}{109504395} & -\frac{13357996}{109504395} &
   -\frac{72324583}{128828700} & -\frac{720772978}{10402917525} &
   -\frac{291188}{1533675} & -\frac{343662071}{4380175800} &
   -\frac{7052429}{42942900} & -\frac{10190867}{109504395} &
   -\frac{68043227}{3963016200} \\
 -\frac{23408479}{876035160} & \frac{106759904}{21900879} &
   -\frac{10916617}{51531480} & -\frac{10916617}{51531480} &
   -\frac{244879}{1840410} & -\frac{1337363}{9733724} & -\frac{244879}{306735} &
   -\frac{53606911}{146005860} & -\frac{3248263}{3680820} &
   -\frac{3248263}{3680820} & -\frac{2680231}{10306296} &
   -\frac{2680231}{5153148} & -\frac{465730}{184041} &
   -\frac{59525563}{219008790} & -\frac{3427337}{4294290} &
   -\frac{126531917}{438017580} & -\frac{1469917}{2147145} &
   -\frac{3279013}{10306296} & \frac{3706671}{97337240} \\
 -\frac{24963443}{547521975} & -\frac{10916617}{51531480} &
   \frac{12239749}{1415700} & -\frac{501551}{1415700} & -\frac{2178359}{9909900}
   & -\frac{312293407}{1460058600} & -\frac{2178359}{1651650} &
   -\frac{3411301}{6134700} & -\frac{20368}{14157} & -\frac{20368}{14157} &
   -\frac{757667}{1981980} & -\frac{757667}{990990} & -\frac{443321}{108900} &
   -\frac{196315124}{547521975} & -\frac{941257}{825825} &
   -\frac{86297573}{257657400} & -\frac{3159973}{3303300} &
   -\frac{85739}{283140} & \frac{56654959}{208579800} \\
 -\frac{24963443}{547521975} & -\frac{10916617}{51531480} &
   -\frac{501551}{1415700} & \frac{12239749}{1415700} & -\frac{2178359}{9909900}
   & -\frac{312293407}{1460058600} & -\frac{2178359}{1651650} &
   -\frac{3411301}{6134700} & -\frac{20368}{14157} & -\frac{20368}{14157} &
   -\frac{757667}{1981980} & -\frac{757667}{990990} & -\frac{443321}{108900} &
   -\frac{196315124}{547521975} & -\frac{941257}{825825} &
   -\frac{86297573}{257657400} & -\frac{3159973}{3303300} &
   -\frac{85739}{283140} & \frac{56654959}{208579800} \\
 -\frac{338777}{11711700} & -\frac{244879}{1840410} & -\frac{2178359}{9909900} &
   -\frac{2178359}{9909900} & \frac{1162547}{198198} & -\frac{1290076}{10735725}
   & -\frac{26641}{33033} & -\frac{3201412}{10735725} & -\frac{94213}{108900} &
   -\frac{94213}{108900} & -\frac{76651}{396396} & -\frac{76651}{198198} &
   -\frac{65447}{27225} & -\frac{456479}{2927925} & -\frac{889403}{1651650} &
   -\frac{3296674}{32207175} & -\frac{26839}{63525} & -\frac{39443}{1981980} &
   \frac{6456887}{21471450} \\
 -\frac{288002937}{9247037800} & -\frac{1337363}{9733724} &
   -\frac{312293407}{1460058600} & -\frac{312293407}{1460058600} &
   -\frac{1290076}{10735725} & \frac{1341943367}{198150810} &
   -\frac{2580152}{3578575} & -\frac{433585133}{730029300} &
   -\frac{29434157}{42942900} & -\frac{29434157}{42942900} &
   -\frac{117464953}{292011720} & -\frac{117464953}{146005860} &
   -\frac{34515419}{21471450} & -\frac{366941303}{495377025} &
   -\frac{8194161}{7157150} & -\frac{676938}{790075} & -\frac{1247859}{1431430}
   & -\frac{59455463}{58402344} & -\frac{3567263197}{3963016200} \\
 -\frac{338777}{1951950} & -\frac{244879}{306735} & -\frac{2178359}{1651650} &
   -\frac{2178359}{1651650} & -\frac{26641}{33033} & -\frac{2580152}{3578575} &
   \frac{343114}{11011} & -\frac{6402824}{3578575} & -\frac{94213}{18150} &
   -\frac{94213}{18150} & -\frac{76651}{66066} & -\frac{76651}{33033} &
   -\frac{130894}{9075} & -\frac{912958}{975975} & -\frac{889403}{275275} &
   -\frac{6593348}{10735725} & -\frac{53678}{21175} & -\frac{39443}{330330} &
   \frac{6456887}{3578575} \\
 -\frac{61814939}{730029300} & -\frac{53606911}{146005860} &
   -\frac{3411301}{6134700} & -\frac{3411301}{6134700} &
   -\frac{3201412}{10735725} & -\frac{433585133}{730029300} &
   -\frac{6402824}{3578575} & \frac{96310504}{5214495} &
   -\frac{33553843}{21471450} & -\frac{33553843}{21471450} &
   -\frac{8757649}{8588580} & -\frac{8757649}{4294290} &
   -\frac{33159457}{10735725} & -\frac{109833091}{60835775} &
   -\frac{10151767}{3578575} & -\frac{298480453}{146005860} &
   -\frac{7463983}{3578575} & -\frac{2909761}{1226940} &
   -\frac{95014589}{52144950} \\
 -\frac{2497361}{12882870} & -\frac{3248263}{3680820} & -\frac{20368}{14157} &
   -\frac{20368}{14157} & -\frac{94213}{108900} & -\frac{29434157}{42942900} &
   -\frac{94213}{18150} & -\frac{33553843}{21471450} & \frac{90452}{2475} &
   -\frac{13498}{2475} & -\frac{439141}{495495} & -\frac{878282}{495495} &
   -\frac{13309}{900} & -\frac{712856}{1288287} & -\frac{25198}{12705} &
   \frac{12017233}{128828700} & -\frac{25111}{23100} & \frac{559703}{495495} &
   \frac{12078779}{4294290} \\
 -\frac{2497361}{12882870} & -\frac{3248263}{3680820} & -\frac{20368}{14157} &
   -\frac{20368}{14157} & -\frac{94213}{108900} & -\frac{29434157}{42942900} &
   -\frac{94213}{18150} & -\frac{33553843}{21471450} & -\frac{13498}{2475} &
   \frac{90452}{2475} & -\frac{439141}{495495} & -\frac{878282}{495495} &
   -\frac{13309}{900} & -\frac{712856}{1288287} & -\frac{25198}{12705} &
   \frac{12017233}{128828700} & -\frac{25111}{23100} & \frac{559703}{495495} &
   \frac{12078779}{4294290} \\
 -\frac{6678998}{109504395} & -\frac{2680231}{10306296} &
   -\frac{757667}{1981980} & -\frac{757667}{1981980} & -\frac{76651}{396396} &
   -\frac{117464953}{292011720} & -\frac{76651}{66066} &
   -\frac{8757649}{8588580} & -\frac{439141}{495495} & -\frac{439141}{495495} &
   \frac{5682427}{396396} & -\frac{263513}{198198} & -\frac{172523}{152460} &
   -\frac{122261819}{109504395} & -\frac{295822}{165165} &
   -\frac{519971}{425880} & -\frac{824749}{660660} & -\frac{538127}{396396} &
   -\frac{240667397}{292011720} \\
 -\frac{13357996}{109504395} & -\frac{2680231}{5153148} & -\frac{757667}{990990}
   & -\frac{757667}{990990} & -\frac{76651}{198198} &
   -\frac{117464953}{146005860} & -\frac{76651}{33033} &
   -\frac{8757649}{4294290} & -\frac{878282}{495495} & -\frac{878282}{495495} &
   -\frac{263513}{198198} & \frac{2709457}{99099} & -\frac{172523}{76230} &
   -\frac{244523638}{109504395} & -\frac{591644}{165165} &
   -\frac{519971}{212940} & -\frac{824749}{330330} & -\frac{538127}{198198} &
   -\frac{240667397}{146005860} \\
 -\frac{72324583}{128828700} & -\frac{465730}{184041} & -\frac{443321}{108900} &
   -\frac{443321}{108900} & -\frac{65447}{27225} & -\frac{34515419}{21471450} &
   -\frac{130894}{9075} & -\frac{33159457}{10735725} & -\frac{13309}{900} &
   -\frac{13309}{900} & -\frac{172523}{152460} & -\frac{172523}{76230} &
   \frac{39227}{450} & -\frac{7955746}{32207175} & \frac{54421}{127050} &
   \frac{161191087}{64414350} & \frac{893}{210} & \frac{215345}{30492} &
   \frac{217039103}{21471450} \\
 -\frac{720772978}{10402917525} & -\frac{59525563}{219008790} &
   -\frac{196315124}{547521975} & -\frac{196315124}{547521975} &
   -\frac{456479}{2927925} & -\frac{366941303}{495377025} &
   -\frac{912958}{975975} & -\frac{109833091}{60835775} &
   -\frac{712856}{1288287} & -\frac{712856}{1288287} &
   -\frac{122261819}{109504395} & -\frac{244523638}{109504395} &
   -\frac{7955746}{32207175} & \frac{36088849118}{1486131075} &
   -\frac{30069938}{10735725} & -\frac{655132571}{156434850} &
   -\frac{19005188}{10735725} & -\frac{522952526}{109504395} &
   -\frac{4662351652}{495377025} \\
 -\frac{291188}{1533675} & -\frac{3427337}{4294290} & -\frac{941257}{825825} &
   -\frac{941257}{825825} & -\frac{889403}{1651650} & -\frac{8194161}{7157150} &
   -\frac{889403}{275275} & -\frac{10151767}{3578575} & -\frac{25198}{12705} &
   -\frac{25198}{12705} & -\frac{295822}{165165} & -\frac{591644}{165165} &
   \frac{54421}{127050} & -\frac{30069938}{10735725} & \frac{11949972}{275275} &
   -\frac{61733263}{21471450} & -\frac{123237}{42350} & -\frac{483403}{165165} &
   -\frac{3529243}{3578575} \\
 -\frac{343662071}{4380175800} & -\frac{126531917}{438017580} &
   -\frac{86297573}{257657400} & -\frac{86297573}{257657400} &
   -\frac{3296674}{32207175} & -\frac{676938}{790075} &
   -\frac{6593348}{10735725} & -\frac{298480453}{146005860} &
   \frac{12017233}{128828700} & \frac{12017233}{128828700} &
   -\frac{519971}{425880} & -\frac{519971}{212940} & \frac{161191087}{64414350}
   & -\frac{655132571}{156434850} & -\frac{61733263}{21471450} &
   \frac{949120823}{31286970} & -\frac{34044067}{21471450} &
   -\frac{269791793}{51531480} & -\frac{694478313}{69526600} \\
 -\frac{7052429}{42942900} & -\frac{1469917}{2147145} & -\frac{3159973}{3303300}
   & -\frac{3159973}{3303300} & -\frac{26839}{63525} & -\frac{1247859}{1431430}
   & -\frac{53678}{21175} & -\frac{7463983}{3578575} & -\frac{25111}{23100} &
   -\frac{25111}{23100} & -\frac{824749}{660660} & -\frac{824749}{330330} &
   \frac{893}{210} & -\frac{19005188}{10735725} & -\frac{123237}{42350} &
   -\frac{34044067}{21471450} & \frac{156031}{3850} & -\frac{826069}{660660} &
   \frac{3453169}{7157150} \\
 -\frac{10190867}{109504395} & -\frac{3279013}{10306296} & -\frac{85739}{283140}
   & -\frac{85739}{283140} & -\frac{39443}{1981980} & -\frac{59455463}{58402344}
   & -\frac{39443}{330330} & -\frac{2909761}{1226940} & \frac{559703}{495495} &
   \frac{559703}{495495} & -\frac{538127}{396396} & -\frac{538127}{198198} &
   \frac{215345}{30492} & -\frac{522952526}{109504395} & -\frac{483403}{165165}
   & -\frac{269791793}{51531480} & -\frac{826069}{660660} &
   \frac{2222557}{56628} & -\frac{439517549}{41715960} \\
 -\frac{68043227}{3963016200} & \frac{3706671}{97337240} &
   \frac{56654959}{208579800} & \frac{56654959}{208579800} &
   \frac{6456887}{21471450} & -\frac{3567263197}{3963016200} &
   \frac{6456887}{3578575} & -\frac{95014589}{52144950} &
   \frac{12078779}{4294290} & \frac{12078779}{4294290} &
   -\frac{240667397}{292011720} & -\frac{240667397}{146005860} &
   \frac{217039103}{21471450} & -\frac{4662351652}{495377025} &
   -\frac{3529243}{3578575} & -\frac{694478313}{69526600} &
   \frac{3453169}{7157150} & -\frac{439517549}{41715960} &
   \frac{3991031128}{165125675} \\
\end{array}
\right)}$}

\end{changemargin}


\end{document}